\def\sqr#1#2{{\vcenter{\hrule height.#2pt
        \hbox{\vrule width.#2pt height#1pt \kern#1pt
                \vrule width.#2pt}
        \hrule height.#2pt}}}
\def\square{\mathchoice\sqr64\sqr64\sqr{4}3\sqr{3}3}
\def\QED{\hfill$\square$}
\def\Sym{{\rm Sym}}
\newtheorem{theorem}{Theorem}[section]
\newtheorem{corollary}[theorem]{Corollary}
\newtheorem{lemma}[theorem]{Lemma}
\newtheorem{observation}[theorem]{Observation}
 \theoremstyle{definition}
\newtheorem{definition}[theorem]{Definition}
\newtheorem{data}[theorem]{Data}
\newtheorem{Table}[theorem]{Table}
\newtheorem{remark}[theorem]{Remark}
\newtheorem*{note}{Note}
\newtheorem{remarks}[theorem]{Remarks}
\newtheorem{example}[theorem]{Example}
\newtheorem*{proof1}{Proof of Theorem \ref{claudia}}
\newtheorem*{proof2}{Proof of Theorem \ref{sextic-table}}
\def\lto{\longrightarrow}
\def\p{{\mathfrak p}}
\newcommand{\f}[1]{\ensuremath{\mathfrak{#1}}}
\def\m{{\mathfrak m}}
\def\p{{\mathfrak p}}
\numberwithin{equation}{theorem}
\numberwithin{table}{theorem}
\def\BalH{{\rm BalH}}
\def\GL{{\rm GL}}
\def\DO{{\rm DO}}
\def\Bal{{\rm Bal}}
\def\syl{{\rm syl}}
\begin{document}
\baselineskip=16pt

\title[The bi-graded structure of
Symmetric Algebras with applications to Rees rings]{\bf The bi-graded
structure of Symmetric Algebras with applications to Rees rings}
\author[A. Kustin, C. Polini, and B. Ulrich]
{Andrew Kustin, Claudia Polini, \and Bernd Ulrich}

\address{Department of Mathematics, University of South Carolina,
Columbia, SC 29208} \email{kustin@math.sc.edu}

\address{Department of Mathematics, University of Notre Dame,
Notre Dame, IN 46556} \email{cpolini@nd.edu}

\address{Department of Mathematics, Purdue University,
West Lafayette, IN 47907} \email{ulrich@math.purdue.edu}

\thanks{AMS 2010 {\em Mathematics Subject Classification}.
Primary  13A30, 14H50; Secondary 14H20, 13D02,  14E05.}

\thanks{The first author was partially supported by the NSA and the Simons Foundation.
The second author was partially supported by the NSF and the NSA.
The last author was partially supported by the NSF and as a Simons Fellow.}

\thanks{Keywords:  Bi-graded structures, duality, elimination theory,  generalized zero of a matrix, generator degrees, Hilbert-Burch matrix, infinitely near singularities, Koszul complex, local cohomology, linkage, matrices of linear forms, Morley forms,  parametrization, rational plane curve, rational plane sextic,  Rees algebra, Sylvester form, symmetric algebra.}

\begin{abstract} Consider a rational projective plane curve $\mathcal C$ parameterized by three homogeneous forms of the same degree in the polynomial ring 
$R=k[x,y]$ over a field $k$. The ideal $I$ generated by these forms is presented by a homogeneous $3 \times 2$ matrix $\varphi$ 
with column degrees $d_1 \leq d_2$. The Rees algebra $\mathcal R=R[It]$ of $I$ is the bi-homogeneous coordinate ring of the graph of the parameterization of $\mathcal C$; and accordingly, there is a dictionary that translates between the singularities of $\mathcal C$ and algebraic properties of the ring $\mathcal R$ and its defining ideal. 
Finding the defining equations of Rees rings is a classical problem in elimination theory that amounts to determining the kernel $\mathcal A$ of the natural map from the symmetric algebra $\Sym(I)$ onto $\mathcal R$. The ideal $\mathcal A_{\ge d_2-1}$, which is an approximation of $\mathcal A$, can be obtained using linkage. We exploit the bi-graded structure of $\Sym(I)$ in order to describe the structure of an improved approximation $\mathcal A_{\ge d_1-1}$ when $d_1<d_2$ and $\varphi$ has a generalized zero in its first column. (The latter condition is equivalent to assuming that $\mathcal C$ has a singularity of multiplicity $d_2$.) In particular, we give the bi-degrees of a minimal bi-homogeneous generating set for this  ideal. When $2=d_1<d_2$  and $\varphi$ has a generalized zero in its first column, then we record explicit generators for $\mathcal A$. When $d_1=d_2$, we provide a translation between the bi-degrees of a bi-homogeneous minimal generating set for $\mathcal A_{d_1-2}$ and the number of singularities of multiplicity $d_1$ that are on or infinitely near $\mathcal C$. We conclude with a table that translates between the bi-degrees of a bi-homogeneous minimal generating set for $\mathcal A$ and the configuration of  singularities of $\mathcal C$ when the curve $\mathcal C$ has degree six.  
\end{abstract}

\maketitle

\section{Introduction.}\label{Intro}

\begin{center}
{\sc    Table of Contents }
\end{center}

\begin{enumerate}
\item[\ref{Intro}.] Introduction. 
\item[\ref{DPC}]Duality, perfect pairing, and consequences. 
\begin{enumerate}
\item[\ref{DPC}.A.] The abstract duality relating  $\mathcal A$ and $\Sym(I)$. 
\item[\ref{DPC}.B.]   The torsionfreeness and reflexivity of the $S$-module ${\rm Sym}(I)_i$ and how these properties are related to the geometry of the corresponding curve. 
\item[\ref{DPC}.C.]  The duality is given by multiplication.  
\item[\ref{DPC}.D.]   Explicit $S$-module generators for $\mathcal A_i$, when $i$ is large.
\end{enumerate}

\item[\ref{3}.]The case of a generalized zero in the first column of $\varphi$.
\item[\ref{MorleyForms}.]{Morley forms}. 
\item[\ref{2=d1}.]{Explicit generators for $\mathcal A$ when $d_1=2$}. 
\item[\ref{d1=d2}.]{The case of $d_1=d_2$}. 
\item[\ref{sextic}.]{An Application: Sextic curves}. 
\end{enumerate}

\bigskip

Our basic setting is as follows: Let $k$ be an algebraically
closed field, $R=k[x,y]$ a polynomial ring in two variables,
and $I$ an ideal of $R$ minimally generated by homogeneous forms $h_1, h_2, h_3$ of
the same degree $d > 0$. Extracting a common divisor we may
harmlessly assume that $I$ has height two. We will keep these
assumptions throughout the introduction, though many  of our results are stated and proved
 in greater generality.

On the one hand, the homogeneous forms $h_1, h_2, h_3$ define a morphism
\begin{equation}\label{gc} \xymatrix{
\eta: \ {\mathbb P}_k^1  \ar[rr]^{[h_1:h_2:h_3]} & & {\mathbb P}_k^2
} \end{equation}
whose image is a curve $\mathcal C$. After reparametrizing we may
assume that the map $\eta$ is birational onto its image or, equivalently,
that the curve $\mathcal C$ has degree $d$.

On the other hand, associated to $h_1,h_2,h_3$ is a syzygy matrix $\varphi$
that gives rise to a homogeneous free resolution of the ideal $I$,
$$ 0\longrightarrow R(-d-d_1) \oplus R(-d-d_2) \overset{\varphi}{\longrightarrow} R(-d)^3
\longrightarrow I \longrightarrow 0 \ . $$
Here $\varphi$ is a 3 by 2 matrix with homogeneous entries in $R$,
of degree $d_1$ in the first column and of degree $d_2$ in
the second column. We may assume that $d_1 \leq d_2$. Notice that
$d= d_1 + d_2$ by the Hilbert-Burch Theorem.

The two aspects, the curve $\mathcal C$ parametrized by the forms $h_1,h_2,h_3$
and the syzygy matrix $\varphi$ of these forms, are mediated by the Rees algebra
${\mathcal R}$ of $I$.
The Rees algebra is defined as the subalgebra $R[It]=R[h_1t,h_2t,h_3t]$ of the polynomial ring $R[t]$.
It becomes a standard bi-graded $k$-algebra if one sets ${\rm deg} \ x = {\rm deg} \ y =(1,0)$ and
${\rm deg} \ t =(-d,1)$, which gives ${\rm deg}\, h_it =(0,1)$. The
bi-homogeneous spectrum of $\mathcal R$ is the
graph $\Gamma \subset {\mathbb P}_k^1 \times {\mathbb P}_k^2$ of the
morphism $\eta=[h_1:h_2:h_3]$. Projecting to the second factor of ${\mathbb P}_k^1 \times {\mathbb P}_k^2$
one obtains a surjection $\Gamma \twoheadrightarrow \mathcal C$, which corresponds to
an inclusion of coordinate rings $${\mathcal R} \hookleftarrow k[h_1t,h_2t,h_3t] \ . $$
Thus the coordinate ring $A({\mathcal C})$ of the curve $\mathcal C$ can be recovered as
a direct summand of the Rees algebra ${\mathcal R}$, namely
$$ A(\mathcal C) = \bigoplus_i {\mathcal R}_{(0,i)} \ . $$
The same holds for the ideal $I$,
$$ I \simeq It = \bigoplus_i {\mathcal R}_{(i,1)} \ . $$

Finally, the inclusion $\Gamma \subset {\mathbb P}_k^1 \times {\mathbb P}_k^2$
corresponds to a homogeneous epimorphism $\mathcal R \twoheadleftarrow B$, where $B=k[x,y,T_1,T_2,T_3]$
is a bi-graded polynomial ring with ${\rm deg} \ x = {\rm deg} \ y =(1,0)$ and
${\rm deg} \ T_i =(0,1)$, and the variables $T_i$ are mapped to $h_it$. The kernel of this
epimorphism is a bi-homogeneous ideal $\mathcal J$ of $B$, the `defining ideal' of the Rees algebra $\mathcal R$.
Now the syzygy module of $I$ can be recovered as well, $$ {\rm syz}(I) \simeq \bigoplus_i {\mathcal J}_{(i,1)} \ . $$

Thus, the philosophy underlying this work can be summarized as follows: One wishes to study local properties
of the rational plane curve $\mathcal C$, such as the types of its singularities, by means of the syzygies
of $I$, since linear relations among polynomials are easier to handle than polynomial relations.
The mediator is the Rees algebra, which in turn carries more information than the coordinate ring $A(\mathcal C)$
of the curve,
just like the graph of a map reveals more than the image of the map. One may therefore hope that
even relatively simple numerical data associated to this algebra, such as the (first) bi-graded Betti numbers, say a
great deal about the curve.
The syzygies of $I$ appear in the defining
ideal $\mathcal J$, which leads one to study defining ideals of Rees algebras. Finding such ideals or, equivalently,
describing Rees rings explicitly in terms of generators and relations, is a fundamental problem in elimination
theory, that has occupied commutative algebraists, algebraic geometers, and, more recently, applied mathematicians.
The problem is wide open, even for ideals of polynomial rings in two variables. 


Write $\mathfrak m =(x,y)$ for the homogeneous maximal ideal of $R=k[x,y]$ and $S$ for the polynomial
ring $k[T_1,T_2,T_3]$. Recall that $B=k[x,y,T_1,T_2,T_3]=R \otimes _k S$ and that $\mathcal R =B/ \mathcal J$.
To study the Rees algebra of an ideal one customarily maps the symmetric algebra onto it,
$$
0 \longrightarrow \mathcal A \longrightarrow {\rm Sym}(I) \longrightarrow \mathcal R \longrightarrow 0
\ .
$$
One readily sees that $\mathcal A = {\mathrm H}^0_{\mathfrak m}({\rm Sym}(I)) = 0 :_{{\rm Sym}(I)} {\mathfrak m}^{\infty}$.
A presentation of the symmetric algebra is well understood, ${\rm Sym}(I) \simeq B/(g_1, g_2)$, where
$$
[g_1, g_2] = [T_1,T_2,T_3] \cdot \varphi  \ .
$$
The polynomials $g_i$ are homogeneous of bi-degree $(d_i,1)$ and together they form a $B$-regular sequence.
The symmetric algebra $\Sym(I)$, the Rees algebra $\mathcal R$, and the ideal $\mathcal A$ of $\Sym(I)$ that defines $\mathcal R$ all are  naturally equipped  with two gradings: the $T$-grading and the $xy$-grading. Both gradings play crucial roles in our work. The $T$-grading is often used in the study of symmetric algebras and Rees algebras. For example, an ideal is said to be of ``linear type'' if $\mathcal A=0$, which means that the defining ideal $\mathcal J$ of the Rees algebra is generated by polynomials of $T$-degree $1$. Ideals of linear type are much studied in the literature; see, for example \cite{H80, V, HSV83,H86,HRZ, L}. Much of our work is focused on the $xy$-grading. We view $\mathcal A$ as $\, \bigoplus\mathcal A_i$, where $\mathcal A_i$ is the $S$-submodule of $\mathcal A$ which consists of all elements homogeneous in $x$ and $y$ of degree $i$; in other words, $\mathcal A_i=\bigoplus_j\mathcal A_{(i,j)}$. One major advantage of this decomposition is the fact that $\mathcal A_i$ is non-zero for only finitely many values of $i$. Both gradings come into play  in the proof of Theorem \ref{claudia}, which is one of the main results of the paper. In Theorem \ref{claudia} we identify the degrees of the minimal generators of each $\mathcal A_i$ in the range $i \ge d_1-1$ (see also Table \ref{table}). In particular, for each fixed $i$ we must determine the minimum value of $j$ for which $\mathcal A_{i,j}$ is not zero. Curiously enough, the key point in our proof is that we 
revert to the $T$-grading for this. 

  The mathematics that sets the present project in motion is due to Jouanolou \cite{jo,jo96}; see also Bus\'e \cite{bu}. Jouanolou proved that the 
 multiplication map
\begin{equation}\label{J}\mathcal A_i\otimes \operatorname{Sym}(I)_{\delta-i}\longrightarrow \mathcal A_{\delta}\simeq S(-2)\end{equation} gives a perfect pairing of $S$-modules for $\delta=d-2$. 
 Jouanolou uses Morley forms to exhibit dual bases for the modules of (\ref{J}). The perfect pairing (\ref{J})   shows that the $S$-module structure of $\mathcal A_i$ is completely determined by the $S$-module structure of $\operatorname{Sym}(I)_{\delta-i}$.
The symmetric algebra $\operatorname{Sym}(I)$ is a complete intersection defined by the regular sequence $g_1,g_2$; so, the $S$-module structure of $\operatorname{Sym}(I)_{\delta-i}$ depends on the relationship between $\delta-i$, $d_1$, and $d_2$.

Ultimately we offer three proofs of Jouanolou's perfect pairing (\ref{J}). Two of our arguments are different from Jouanolou's; furthermore, our arguments are self-contained,  and we obtain results not obtained by Jouanolou.   In particular, we relate the entries of $\varphi$   to module-theoretic properties of $\mathcal A_i$ and $\Sym(I)_{\delta-i}$, (see especially Theorem \ref{future} and Corollary \ref{2.12} in Subsection \ref{DPC}.B) and also to information about the singularities of the curve parameterized by a minimal generating set for $I$; see especially Sections \ref{d1=d2} and \ref{sextic}. A very quick proof of the abstract duality relating $\mathcal A$ and $\Sym(I)$ is given in Subsection \ref{DPC}.A. This proof computes the local cohomology with support in $\mathfrak m$ along the Koszul complex which resolves $\Sym(I)$ as a $B$-module. In Subsection \ref{DPC}.C we take advantage of the module theoretic properties of the $\mathcal A_i$ (in particular the fact that they are reflexive $S$-modules as is shown in Subsection \ref{DPC}.B) to prove that the abstract isomorphism of Subsection \ref{DPC}.A is actually given by multiplication. Finally, in Theorem \ref{Morley}, as part of our review of the theory of Morley forms in Section \ref{MorleyForms}, we give Jouanolou's own proof of the perfect pairing (\ref{J}).

In Theorem \ref{claudia}
we describe the $S$-module structure of $\mathcal A_{\ge d_1-1}$ (according to the convention described above, $\mathcal A_{\ge d_1-1}$ means $\bigoplus_j A_{(\ge d_1-1,j)}$) under the hypothesis that $d_1<d_2$ and $\varphi$ has a generalized zero in its first column.  This module is free and we identify the bi-degrees of a bi-homogeneous basis for it;  see also Table \ref{table}. In Corollary \ref{B-mod} we identify the bi-degrees of a minimal  bi-homogeneous  generating set of $\mathcal A_{\ge d_1-1}$ as an ideal of $\operatorname{Sym}(I)$. 
When one views this result in the geometric context of (\ref{gc}),
then  the hypothesis concerning the existence of a generalized zero is equivalent to assuming that $\mathcal C$ has a singularity of multiplicity $d_2$, and the ideal $\mathcal A_{\ge d_1-1}$ of the conclusion is an approximation of the ideal that defines the graph $\Gamma\subset \mathbb P^1_k\times \mathbb P^2_k$ of the parameterization $\eta:\mathbb P^1_k\twoheadrightarrow \mathcal C$.  
The part of $\Sym(I)$ that corresponds to $\mathcal A_{\ge d_1-1}$, under the duality of  (\ref{J}), is $\Sym(I)_{\le d_2-1}$. There is no contribution from $g_2$ to the $S$-module $\Sym(I)_{\le d_2-1}$ in the bi-homogeneous  $B$-resolution of $\Sym(I)$. So, basically, we may ignore $g_2$. Furthermore, the hypothesis that the first column of $\varphi$ has a generalized zero allows us to make the critical calculation over a   subring $U$ of $S$, where $U$ is a polynomial ring in two variables. 

In Section \ref{2=d1} we focus on the situation $2= d_1<d_2$.  
Bus\'e \cite{bu} has given explicit formulas for the generators of 
 $\mathcal A$ if the first column of  $\varphi$ does not have a  generalized zero.    In Theorem \ref{goal5} we carry out the analogous project  in the case when
the first column of  $\varphi$ does have a generalized zero.   These hypotheses about generalized zeros in the first column have geometric implications for  the corresponding curve. In Bus\'e's case all of the singularities of $\mathcal C$ have multiplicity at most $d_1$; whereas, in the situation of Theorem \ref{goal5}, $\mathcal C$ has at least one  singularity of multiplicity $d_2$. The proof of Theorem \ref{goal5} is based on the results of Section \ref{3} (since $\mathcal A_{\ge d_1-1}$ is equal to $\mathcal A_{\ge 1}$ when $d_1=2$ and $\mathcal A_0$ is always well understood), an analysis of the kernel of a Toeplitz matrix of linear forms in two variables (see Lemmas \ref{J08} and \ref{J08'}), and  Jouanolou's theory of Morley forms. We review the theory of Morley forms in Section \ref{MorleyForms}.

In Section \ref{d1=d2} we completely describe the $S$-module structure of $\mathcal A_{d_1-2}$ when $d_1=d_2$. A preliminary version of this section initiated the investigation    that culminated in \cite{CKPU}. The geometric significance of these calculations are emphasized in \cite{CKPU} and are reprised in 
the present paper; however the main focus of Section \ref{d1=d2} is on the Rees algebras. 

The results in  Sections 1 -- 6 suffice to provide significant information about the  defining
equations for $\mathcal R$ if $d=d_1 +d_2 \leq 6$, since then $d_1 \leq 2$ (see, especially, Section \ref{2=d1}) or $d_1=d_2$ (see, especially, Section \ref{d1=d2}). Section \ref{sextic} is concerned with the   case $d=6$, the case of a sextic curve.
We show that 
there is, essentially, a one-to-one correspondence between the bi-degrees of the
defining equations of $\mathcal R$ on the one hand and the types of the singularities on or
infinitely near the curve $\mathcal C$ on the other hand.

If $R$ is a ring, we write $\operatorname{Quot}(R)$ for the {\it total ring of quotients} of $R$; that is, $\operatorname{Quot}(R)=U^{-1}R$, where $U$ is the set of non zerodivisors on $R$. 
If $R$ is a domain, the total ring of quotients of $R$ is usually called the {\it quotient field} of $R$. 

If $M$ is a matrix, then $M^{\rm T}$ denotes the transpose of $M$. If $M$ has entries in a $k$-algebra, where $k$ is a field, then
a {\it generalized zero} of $M$ is a 
  product $pMq^{\text{\rm T}}=0$, where $p$ and $q$ are non-zero row vectors with entries from $k$. 

If $M$ is a $\ell-1\times \ell$ matrix with entries in a ring, then the ring elements  $m_1,\dots,m_{\ell}$ are the {\it signed maximal minors} of the matrix $M$; that is, $m_i$ is $(-1)^{i+1}$ times the determinant of the submatrix of $M$  obtained by removing column $i$. 
We notice that the product $\, M\, [m_1,\dots,m_{\ell}]^{\rm T}$ is zero.

If $S$ is a ring and $A$, $B$, and $C$ are $S$-modules, then the $S$-module homomorphism $F: A\otimes_S B\to C$ is a {\it perfect pairing} if the induced $S$-module homomorphisms $A\to \mathrm{Hom}_S(B,C)$ and $B\to \mathrm{Hom}_S(A,C)$,  given by $a\mapsto F(a\otimes \underline{\phantom{X}})$ and $b\mapsto F(\underline{\phantom{X}}\otimes b)$, are isomorphisms.

\section{Duality, perfect pairing, and consequences.}\label{DPC}

\begin{data}\label{data1} Let $k$ be a field, $R=k[x,y]$  a polynomial ring in $2$ variables over $k$, $\mathfrak m=(x,y)R$  the homogeneous maximal ideal of $R$, and $I$  a height $2$ ideal of $R$ minimally generated by $3$ forms of the same positive degree $d$. Let  $\delta=d-2$. Let   $\varphi$ be a homogeneous Hilbert-Burch matrix for $I$; each entry in column $i$ of $\varphi$ has degree $d_i$ with $d_1\le d_2$. Let $\mathcal A$ be the kernel of the natural surjection
$${\rm Sym}(I)\twoheadrightarrow \mathcal R  $$ from the symmetric algebra of $I$ to the Rees algebra of $I$, and  let $S$ and $B$ be the polynomial rings $S=k[T_1,T_2,T_3]$ and $B=R\otimes_kS=k[x,y,T_1,T_2,T_3]$. View $B$ as a bi-graded $k$-algebra, where $x$ and $y$ have bi-degree $(1,0)$ and each $T_i$ has bi-degree $(0,1)$. A presentation of the symmetric algebra is given by ${\rm Sym}(I)
\simeq B/(g_1,g_2)$, where $$
[g_1, g_2] = [T_1,T_2,T_3] \cdot \varphi  \ .
$$
\end{data}

\begin{remarks}\label{R2.2} Adopt Data {\rm \ref{data1}}. The Hilbert-Burch Theorem guarantees that $d_1+d_2=d$.
One readily sees that the ${\rm Sym}(I)$-ideals $$\mathcal A\quad \text{and}\quad  {\rm H}^0_{\mathfrak m}({\rm Sym}(I)) = 0 :_{{\rm Sym}(I)} {\mathfrak m}^{\infty}$$ are equal.
The polynomial $g_m$ is homogeneous of bi-degree $(d_m,1)$.
 The polynomials $g_1,g_2$   form a regular sequence on $B$ because the dimension of ${\rm Sym}(I)$ is equal to $3$ by \cite{HR}. Thus, the Koszul complex provides a bi-homogeneous $B$-resolution of the symmetric algebra:
\begin{equation}\label{Kos}
K_{\bullet}(g_1,g_2;B) \longrightarrow {\rm Sym}(I) \to 0  \  .
\end{equation}
\end{remarks}

\begin{remark} When the bi-graded $B$-modules $\mathcal A=\bigoplus_{i,j}\mathcal A_{(i,j)}$
and ${\rm Sym}(I)=\bigoplus_{i,j}{\rm Sym}(I)_{(i,j)}$
are viewed as   $S$-modules, then we write $\mathcal A=\bigoplus_i \mathcal A_i$ and ${\rm Sym}(I)=\bigoplus_i {\rm Sym}(I)_i$ , where $\mathcal A_i$ represents the $S$-module $\mathcal A_i=\bigoplus_j \mathcal A_{(i,j)}$ and  ${\rm Sym}(I)_i$ represents the $S$-module ${\rm Sym}(I)_i=\bigoplus_j {\rm Sym}(I)_{(i,j)}$.
\end{remark}

The goal of this section is to prove that $\mathcal A_{\delta}$ is a free $S$-module generated by an explicit element ${\rm syl}\in {\rm Sym}(I)_{(\delta,2)}$ and that the multiplication map
\begin{equation}\label{sectgoal}\mathcal A_i\otimes {\rm Sym}(I)_{\delta-i}\longrightarrow \mathcal A_{\delta}=S\cdot\text{syl}\end{equation} gives a perfect pairing of $S$-modules. Both of these results are due to Jouanolou \cite{jo,jo96}; see also Bus\'e \cite{bu}. Our arguments are  different from Jouanolou's; furthermore,  our arguments are self-contained, 
and 
we obtain results not obtained by Jouanolou. In particular, we relate the entries of $\varphi$   to module-theoretic properties of $\mathcal A_i$ and $\Sym(I)_{\delta-i}$, and also to information about the singularities of the curve parameterized by a minimal generating set for $I$. 
The section consists of four subsections:

\noindent  

\begin{enumerate}
\item[\ref{DPC}.A] The abstract duality relating  $\mathcal A$ and $\Sym(I)$;

\item[\ref{DPC}.B]   The torsionfreeness and reflexivity of the $S$-module ${\rm Sym}(I)_i$ and how these properties are related to the geometry of the corresponding curve;

\item[\ref{DPC}.C]  The duality is given by multiplication; and

\item[\ref{DPC}.D]   Explicit $S$-module generators for $\mathcal A_i$, when $i$ is large.

\end{enumerate}

\bigskip\bigskip
\begin{center}
{\sc \ref{DPC}.A \quad The abstract duality relating  $\mathcal A$ and $\Sym(I)$.}
\end{center}

\bigskip
The goal of this subsection is to relate the $S$-modules $\mathcal A_i$ and $\Sym(I)_{\delta-i}$ and to express $\mathcal A_i$ as the kernel of a homomorphism of free $S$-modules. This goal is attained in Corollary \ref{cor2.2} and Theorem \ref{chart}. The first step toward (\ref{sectgoal}) is to establish an abstract isomorphism between $\mathcal A$ and a shift of $\underline{\rm Hom}_S({\rm Sym}(I),S)$, where
$\underline{\rm Hom}$ denotes the graded dual.
Toward that aim, one computes local cohomology with support in $\mathfrak m$ along the resolution (\ref{Kos}),
uses the symmetry of
the Koszul complex, and the isomorphism ${\rm H}^2_{\mathfrak m}(R) \simeq \underline{\rm Hom}_k(R,k)(2)$.

\begin{theorem}\label{iso} If Data {\rm \ref{data1}} is adopted, then there is an isomorphism of  bi-graded $B$-modules
$$\mathcal A \simeq \underline{\rm Hom}_S({\rm Sym}(I),S)(-\delta,-2).$$
\end{theorem}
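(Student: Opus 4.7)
The plan is exactly the route suggested in the paragraph preceding the statement: feed the Koszul resolution (\ref{Kos}) into the local cohomology functor $\Gamma_{\mathfrak m}$, then exploit Koszul self-duality and the identification $\operatorname{H}^2_{\mathfrak m}(R)\simeq\underline{\operatorname{Hom}}_k(R,k)(2)$ for $R=k[x,y]$. Step one is a reduction to a kernel: each term $K_i=K_i(g_1,g_2;B)$ is a finite direct sum of bi-graded shifts of $B=R\otimes_k S$, and since $R$ has depth two with respect to $\mathfrak m$, the module $\operatorname{H}^j_{\mathfrak m}(K_i)$ vanishes unless $j=2$. The hypercohomology spectral sequence for $R\Gamma_{\mathfrak m}$ applied to (\ref{Kos}) therefore has a single non-vanishing row at $E_1$, collapses at $E_2$, and yields
$$\mathcal A\;=\;\operatorname{H}^0_{\mathfrak m}(\Sym(I))\;\simeq\;\ker\bigl(\operatorname{H}^2_{\mathfrak m}(K_2)\longrightarrow\operatorname{H}^2_{\mathfrak m}(K_1)\bigr).$$

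Step two converts each $\operatorname{H}^2_{\mathfrak m}$-term into an $S$-graded dual. Combining the identification of $\operatorname{H}^2_{\mathfrak m}(R)$ with $k$-flatness of $S$ gives bi-graded $B$-linear isomorphisms
$$\operatorname{H}^2_{\mathfrak m}\bigl(B(a,b)\bigr)\;\simeq\;\underline{\operatorname{Hom}}_S\bigl(B(-2-a,-b),S\bigr),$$
which applied term-wise turn $\operatorname{H}^2_{\mathfrak m}(K_\bullet)$ into $\underline{\operatorname{Hom}}_S(L^\bullet,S)$ for a cochain complex $L^\bullet$ with terms $B(-2,0)$, $B(d_1-2,1)\oplus B(d_2-2,1)$, $B(\delta,2)$. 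The differential at the right end of $L^\bullet$ is the dual Koszul map $(\alpha,\beta)\mapsto g_1\beta-g_2\alpha$, whose image is $(g_1,g_2)\cdot B(\delta,2)$; hence $\operatorname{coker}(L^1\to L^2)\simeq\Sym(I)(\delta,2)$. Because $\underline{\operatorname{Hom}}_S(-,S)$ sends right-end cokernels to left-end kernels, the kernel from step one equals
$$\underline{\operatorname{Hom}}_S\bigl(\Sym(I)(\delta,2),S\bigr)\;=\;\underline{\operatorname{Hom}}_S(\Sym(I),S)(-\delta,-2),$$
which is the desired isomorphism.

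The main obstacle I expect is bi-grading book-keeping: tracking the $+2$ shift in the $R$-grading contributed by local duality on $R$, the $(\delta,2)$ shift inherited from the Koszul complex on the length-two regular sequence $g_1,g_2$, and the sign flips produced when shifts are pulled through the contravariant functor $\underline{\operatorname{Hom}}_S(-,S)$. Apart from those shifts, the argument is formal, and rests on three inputs: the Koszul complex resolves $\Sym(I)$, $\operatorname{H}^2_{\mathfrak m}(R)$ is a twist of the graded Matlis dual of $R$, and the dual of the Koszul complex on the regular sequence $g_1,g_2$ has highest cohomology a shift of $\Sym(I)$.
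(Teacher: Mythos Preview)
Your proposal is correct and follows essentially the same route as the paper. The paper breaks the Koszul resolution into two short exact sequences and chases the associated long exact sequences in $\operatorname{H}^*_{\mathfrak m}$, arriving at $\operatorname{H}^0_{\mathfrak m}(\Sym(I))\simeq\operatorname{H}_2\bigl(K_\bullet(g_1,g_2;\operatorname{H}^2_{\mathfrak m}(B))\bigr)$, and then invokes Koszul self-duality in the packaged form $\underline{\operatorname{Hom}}_S(K_\bullet,S)\simeq K_\bullet(g_1,g_2;\underline{\operatorname{Hom}}_S(B,S))[2](d,2)$; your hypercohomology spectral sequence and term-by-term dualisation accomplish exactly the same thing with different bookkeeping, and your concern about the bi-degree shifts is precisely where the paper is careful too.
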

\begin{proof}
We first establish two isomorphisms that are essential for our proof.
From the self-duality of the Koszul complex one obtains an isomorphism of complexes of bi-graded $B$-modules,
\begin{equation}\label{bla}\underline{\rm Hom}_S(K_{\bullet}(g_1,g_2;B),S)\simeq K_{\bullet}(g_1,g_2;\underline{\rm Hom}_S(B,S))[2](d,2).\end{equation}
The symbol [2] indicates homological degree shift. The internal bi-degree shift is written $(d,2)$.
We  also use the following isomorphisms of bi-graded $B$-modules,
\begin{equation*}\begin{array}{rcll}
{\rm H}^2_{\m}(B)&\simeq& {\rm H}^2_{\m}(R \otimes_R B) \\
&\simeq& {\rm H}^2_{\m}(R) \otimes_R B  &\mbox{since $B$ is $R$-flat}\\
&\simeq& {\rm H}^2_{\m}(R) \otimes_k S &\mbox{since $B\simeq R \otimes _k S$}\\
&\simeq& \underline{\rm Hom}_k(R,k)(2)\otimes_k S &\mbox{by Serre duality}\\
&\simeq& \underline{\rm Hom}_S(B,S)(2,0).\end{array}
\end{equation*}
We deduce that
\begin{equation}\label{blabla}
{\rm H}^2_{\m}(B)\simeq \underline{\rm Hom}_S(B,S)(2,0).\end{equation}

To prove the assertion of the theorem we decompose $K_{\bullet}=K_{\bullet}(g_1,g_2;B)$ into short exact sequences
$$ 0 \to K_2 \longrightarrow K_1 \longrightarrow \mathcal{J} \to 0 \qquad \text{and}  \qquad 0 \to \mathcal{J} \longrightarrow K_0 \longrightarrow {\rm Sym}(I) \to 0.$$
The second sequence gives \begin{equation}\label{partial0} 0={\rm H}^0_{\m}(K_0) \longrightarrow {\rm H}^0_{\mathfrak m}({\rm Sym}(I)) \longrightarrow {\rm H}^1_{\m}(\mathcal{J}) \longrightarrow {\rm H}^1_{\m}(K_0)=0,\end{equation}
where the first and last modules vanish because $\operatorname {grade}{\m} B > 1$.
The first sequence above yields
\begin{equation}\label{partial} 0={\rm H}^1_{\m}(K_1) \longrightarrow {\rm H}^1_{\mathfrak m}(\mathcal{J})
\longrightarrow {\rm H}^2_{\m}(K_2) \stackrel{\partial}{\longrightarrow} {\rm H}^2_{\m}(K_1).\end{equation}
Notice that $\partial$ is the second differential of the Koszul complex $K_{\bullet}(g_1,g_2;{\rm H}^2_{\m}(B))$ because the formation of local cohomology commutes with taking direct sums.
Thus from (\ref{partial0}) and (\ref{partial}) we obtain a bi-graded isomorphism
$$ {\rm H}^0_{\mathfrak m}({\rm Sym}(I))\simeq {\rm H}_2(K_{\bullet}(g_1,g_2;{\rm H}^2_{\m}(B)).$$

On the other hand,
\begin{equation*}\begin{array}{rcll}
{\rm H}_2(K_{\bullet}(g_1,g_2;{\rm H}^2_{\m}(B))&\simeq& {\rm H}_2(K_{\bullet}(g_1,g_2;\underline{\rm Hom}_S(B,S)(2,0)) &\mbox{by (\ref{blabla})}\\
&\simeq& {\rm H}_0(\underline{\rm Hom}_S(K_{\bullet}(g_1,g_2,B),S))(-d,-2)(2,0)  &\mbox{by (\ref{bla})}\\
&\simeq& \underline{\rm Hom}_S({\rm Sym}(I),S)(2-d,-2). \end{array}
\end{equation*}
The last isomorphism holds because $K_{\bullet}(g_1,g_2;B)$ is a resolution of ${\rm Sym}(I)$.
\QED
\end{proof}

\begin{corollary}\label{cor2.2}Adopt Data {\rm \ref{data1}}. The following statements hold.

\begin{itemize}

\item[{\rm(1)}] The graded $S$-modules $\mathcal A_i  $ and ${\rm Hom}_S({\rm Sym}(I)_{\delta-i},S(-2))$ are isomorphic for all $i$.
\item[{\rm(2)}]  The $S$-module $\mathcal A_i  $ is zero for all $i > \delta$.
\item[{\rm(3)}] The graded $S$-module $\mathcal A _{\delta}$ is isomorphic to $S(-2)$.
\item[{\rm(4)}]  The   $S$-module $\mathcal A_i  $ is reflexive for all $i$.

\end{itemize}
\end{corollary}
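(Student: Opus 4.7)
The plan is to derive parts (1)--(3) directly from Theorem \ref{iso} by extracting the $(x,y)$-degree $i$ strand of the bi-graded isomorphism
$$\mathcal{A}\;\simeq\;\underline{\rm Hom}_S({\rm Sym}(I),S)(-\delta,-2),$$
and to obtain (4) from a general property of duals over Gorenstein rings.

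The first step is to unpack the bi-grading on $\underline{\rm Hom}_S({\rm Sym}(I),S)$. A bi-homogeneous $S$-linear map of bi-degree $(p,q)$ raises bi-degree by $(p,q)$; since $S$ is concentrated in $(x,y)$-degree $0$, any such map is supported on the summand ${\rm Sym}(I)_{-p}$. Consequently, the $(p,\ast)$-strand of $\underline{\rm Hom}_S({\rm Sym}(I),S)$ is ${\rm Hom}_S({\rm Sym}(I)_{-p},S)$, viewed as a graded $S$-module via the $T$-grading. Applying the shift $(-\delta,-2)$ from Theorem \ref{iso} and then reading off the $(i,\ast)$-strand yields $\mathcal{A}_i\simeq {\rm Hom}_S({\rm Sym}(I)_{\delta-i},S(-2))$, which is (1).

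Parts (2) and (3) follow immediately. Since ${\rm Sym}(I)=B/(g_1,g_2)$ is a quotient of the non-negatively bi-graded ring $B$ by homogeneous elements of positive bi-degree, one has ${\rm Sym}(I)_j=0$ for $j<0$, so $\mathcal{A}_i=0$ whenever $i>\delta$. For (3), the relations $g_1,g_2$ have $(x,y)$-degrees $d_1,d_2\ge 1$, hence do not disturb the $(x,y)$-degree zero strand: ${\rm Sym}(I)_0 = B_{(0,\ast)} = S$, so $\mathcal{A}_\delta \simeq {\rm Hom}_S(S,S(-2)) \simeq S(-2)$.

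For (4), I plan to invoke the standard fact that over a Gorenstein Noetherian ring the dual of any finitely generated module is a second syzygy and therefore reflexive. Concretely, starting from a free presentation $F_1\to F_0\to {\rm Sym}(I)_{\delta-i}\to 0$, dualizing exhibits ${\rm Hom}_S({\rm Sym}(I)_{\delta-i},S(-2))$ as the kernel of a map between free $S$-modules, hence as a second syzygy. Second syzygies over a Cohen--Macaulay ring satisfy Serre's condition $S_2$, and over the regular (hence normal) domain $S=k[T_1,T_2,T_3]$ condition $S_2$ is equivalent to reflexivity. By (1), $\mathcal{A}_i$ is such a dual, so it is reflexive. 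The main issue to watch out for in the whole argument is purely bookkeeping: tracking the shift $(-\delta,-2)$ in Theorem \ref{iso} so that the ${\rm Sym}(I)$-index in (1) comes out to $\delta-i$ (not $i-\delta$) and the twist on the target is $S(-2)$ (not $S(2)$). Modulo that vigilance, the corollary is a direct reading of Theorem \ref{iso}.
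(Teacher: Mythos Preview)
Your proof is correct and matches the paper's approach essentially line for line: (1) is read off from Theorem~\ref{iso} by taking the degree-$i$ strand, (2) and (3) follow since ${\rm Sym}(I)_\ell=0$ for $\ell<0$ and ${\rm Sym}(I)_0=S$, and (4) is the standard fact that the $S$-dual of a finitely generated module is reflexive. The paper states (4) in one phrase while you unpack it via second syzygies and $S_2$, but this is the same argument spelled out.
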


\begin{proof} Assertion (1) follows directly from Theorem~\ref{iso}; (2) follows from (1) since ${\rm Sym}(I)_{\ell}$ is zero when $\ell$ is negative; (3) holds because ${\rm Sym}(I)_{0}=S$; and (4) holds because the  $S$-dual of every finitely generated $S$-module is reflexive.
\QED
\end{proof}

\smallskip

Theorem \ref{iso} shows that the $S$-module structure of $\mathcal A_i$ is completely determined by the $S$-module structure of ${\rm Sym}(I)_{\delta-i}$.
The symmetric algebra ${\rm Sym}(I)$ is a complete intersection defined by the regular sequence $g_1,g_2$; so, the $S$-module structure of ${\rm Sym}(I)_{\delta-i}$ depends on the relationship between $\delta-i$, $d_1$, and $d_2$.
Theorem \ref{chart}
describes the $S$-module structure of  ${\rm Sym}(I)_{\delta-i}$  and $\mathcal A_i$ as a function of where $\delta-i$ sits with respect to $d_1\le d_2$. We set up the  relevant notation in the next definition.

\begin{definition}\label{upsilon} The polynomials $g_1$ and $g_2$ in $S[x,y]$ are defined in Data \ref{data1}. At this point we name their coefficients by writing
\begin{equation}\label{g}g_m=\sum_{\ell=0}^{d_m} c_{\ell,m}x^\ell y^{d_m-\ell},\end{equation} with $c_{\ell,m}\in S_1$, for $m$ equal to $1$ or $2$. For positive integers $n$ and $m$, with $m$ equal to $1$ or $2$, let $\Upsilon_{n,m}$ be the
 $(d_{m}+n)\times n$   matrix
$$\Upsilon_{n,m}=
\left[\begin{matrix}
c_{0,m}&0&0&0&0&0\\
c_{1,m}&c_{0,m}&0&0&0&0\\
\cdot&     c_{1,m} &\ddots&0&0&0  \\
\cdot&\cdot&\ddots&\ddots&0&0\\
\cdot&\cdot&\ddots&\ddots&\ddots&0\\
\cdot&\cdot&\ddots&\ddots&\ddots&c_{0,m}\\
c_{d_{m},m}&\cdot&\ddots&\ddots&\ddots&c_{1,m}\\
0&c_{d_{m},m}&\ddots&\ddots&\ddots&\cdot\\
0&0&c_{d_{m},m}&\ddots&\ddots&\cdot\\
0&0&0&\ddots&\ddots&\cdot\\
0&0&0&0&\ddots&\cdot\\
0&0&0&0&0&c_{d_{m},m}\end{matrix}\right],
 $$
with entries from $S_1$. The matrix $\Upsilon_{n,m}$ represents the map of free $S$-modules $S[x,y]_{n-1}\to S[x,y]_{n-1+d_m}$ which is given by multiplication by $g_m$ when the bases $y^{n-1},\dots,x^{n-1}$ and $y^{n-1+d_m},\dots,x^{n-1+d_m}$ are used for $S[x,y]_{n-1}$ and $S[x,y]_{n-1+d_m}$, respectively.
\end{definition}

\begin{theorem}\label{chart} Adopt Data \ref{data1}. The following statements hold.

\begin{itemize}
\item[{\rm(1)}] If $0\le i\le d_1-2$, then
the  $S$-modules $\mathcal A_i$ and ${\rm Sym}(I)_{\delta-i}$ both have rank $i+1${\rm;} furthermore, the following  sequences of $S$-modules are exact{\rm{:}}
$$0\to  \xymatrix{ S(-1)^{d_2-i-1}\oplus S(-1)^{d_1-i-1} \ar[rrrrr] ^{\phantom{XXXXXXXX}
\left(\begin{matrix} {\Upsilon_{d_2-i-1,1}}&
 {\Upsilon_{d_1-i-1,2}}\end{matrix}\right) }&&&&&S^{\delta-i+1}}\longrightarrow {\rm Sym}(I)_{\delta-i}\to 0$$and
$$0\to \mathcal A_i\to \xymatrix{S(-2)^{\delta-i+1} \ar[rrr] ^-{\left(\begin{matrix} {\Upsilon_{d_2-i-1,1}}^{\text{\rm T}}\\
\vspace{5pt}{\Upsilon_{d_1-i-1,2}}^{\text{\rm T}}\end{matrix}\right) }&&& S(-1)^{d_2-i-1}\oplus S(-1)^{d_1-i-1}}.$$

\item[{\rm(2)}] If $d_1-1\le i\le d_2-2$, then the $S$-modules $\mathcal A_i$ and ${\rm Sym}(I)_{\delta-i}$ both have rank $d_1${\rm;} furthermore,
the following  sequences of $S$-modules are exact{\rm{:}}
$$  0\to\xymatrix{S(-1)^{d_2-i-1} \ar[rr]
^{\phantom{XXX}\Upsilon_{d_2-i-1,1}\phantom{XXX}}&& S^{\delta-i+1}}\longrightarrow {\rm Sym}(I)_{\delta-i}\to 0\, .$$
and
$$0\to \mathcal A_i\to \xymatrix{S(-2)^{\delta-i+1} \ar[rr] ^{{\Upsilon_{d_2-i-1,1}}^{\text{\rm T}}} &&  S(-1)^{d_2-i-1}}.$$

\item[{\rm(3)}] If $d_2-1\le i\le \delta$, then the $S$-modules $\mathcal A_i$ and ${\rm Sym}(I)_{\delta-i}$ both have rank $\delta-i+1${
\rm;} furthermore,
$$ {\rm Sym}(I)_{\delta-i} \simeq S^{\delta-i+1}  \qquad \mbox{and} \qquad   \mathcal A_i \simeq S(-2)^{\delta-i+1}.$$
\end{itemize}
\end{theorem}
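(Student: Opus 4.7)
The strategy is to read off both exact sequences from a single source: the $xy$-graded strand of the Koszul resolution (\ref{Kos}) in internal degree $\delta - i$. Writing $B_n$ for the $n$-th graded piece of $S[x,y]$, a free $S$-module of rank $n+1$ when $n \ge 0$ and zero otherwise, the strand reads
$$B_{\delta-i-d}(-2) \longrightarrow B_{\delta-i-d_1}(-1) \oplus B_{\delta-i-d_2}(-1) \xrightarrow{(g_1,\,g_2)} B_{\delta-i} \longrightarrow {\rm Sym}(I)_{\delta-i} \to 0.$$
Since $\delta - d = -2 < 0$, the leftmost term vanishes in every case, so what remains is a two-term presentation of ${\rm Sym}(I)_{\delta-i}$ by free $S$-modules. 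Because $g_1, g_2$ is a $B$-regular sequence, the map into $B_{\delta - i}$ is injective, so the presentation extends to a short exact sequence of free $S$-modules.

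The three cases in the theorem correspond to the three ranges governed by which of $\delta-i-d_1$ and $\delta-i-d_2$ are non-negative. In case (1), both are non-negative; in case (2), only the $g_1$-summand survives because $\delta-i-d_2 = d_1-i-2 < 0$; in case (3), both become negative and the resolution collapses to an $S$-linear isomorphism ${\rm Sym}(I)_{\delta-i} \simeq B_{\delta-i}$. Choosing the monomial bases $y^{n-1}, \dots, x^{n-1}$ and $y^{n-1+d_m}, \dots, x^{n-1+d_m}$ identifies multiplication by $g_m$ on $S[x,y]_{n-1}$ with the Toeplitz matrix $\Upsilon_{n,m}$ of Definition \ref{upsilon}. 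Matching $n-1 = \delta - i - d_m$ gives $n = d_{3-m} - i - 1$, which yields precisely the matrices displayed in the theorem.

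To obtain the companion sequences for $\mathcal A_i$, I would apply $\underline{\rm Hom}_S(-, S(-2))$ to the sequence just established and invoke Corollary \ref{cor2.2}(1), which provides the identification $\mathcal A_i \simeq \underline{\rm Hom}_S({\rm Sym}(I)_{\delta-i}, S(-2))$. Left-exactness of $\underline{\rm Hom}_S(-, S(-2))$ then yields exactness at the first two terms of the dual sequence, and the presentation matrix appears in transposed form, matching the statement. The rank assertions follow by a direct count: in case (1), injectivity of the concatenated $\Upsilon$-map gives cokernel rank $(\delta-i+1) - (d_2-i-1) - (d_1-i-1) = i+1$; in case (2), injectivity of $\Upsilon_{d_2-i-1,1}$ gives rank $d_1$; in case (3), the rank is $\delta - i + 1$.

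The main obstacle is clerical: one must verify that the twists $(-1)$ on the Koszul summands $B(-d_m, -1)$ descend to the claimed $S(-1)$ factors on the graded strand, and that the internal shift $(-2)$ in Corollary \ref{cor2.2}(1) is preserved correctly by dualization. Once those identifications are pinned down, the content of the proof reduces to observing that everything to the left of the displayed strand vanishes for degree reasons and that the matrices representing multiplication by $g_m$ are, by construction, the Toeplitz matrices $\Upsilon_{n,m}$.
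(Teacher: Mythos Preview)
Your proposal is correct and follows essentially the same approach as the paper: decompose the Koszul resolution of ${\rm Sym}(I)$ into $xy$-graded strands in degree $\delta - i$, observe which summands vanish in each range, identify the surviving multiplication maps with the Toeplitz matrices $\Upsilon_{n,m}$, and then invoke Corollary~\ref{cor2.2}(1) for the $\mathcal A_i$ statements. The paper's proof is a terse two sentences that say exactly this; you have simply filled in the details it leaves implicit.
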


\bigskip

\begin{proof}
The homogeneous $B$-resolution  of   ${\rm Sym}(I)$
$$0\to B(-d_1-d_2,-2)\longrightarrow B(-d_1,-1)\oplus B(-d_2,-1)\longrightarrow B\longrightarrow {\rm Sym}(I)\to 0\, ,$$
which is given in (\ref{Kos}), may be decomposed into the graded strands recorded in the statement of the Theorem.
The rank of each $S$-module  ${\rm Sym}(I)_{\delta-i}$ can be  read immediately from its resolution. The statements about the modules $\mathcal A_i$ follow from part (1) of Corollary \ref{cor2.2}.
\QED
\end{proof}

\bigskip\bigskip
\begin{center}
{\sc \ref{DPC}.B \quad The torsionfreeness and reflexivity of the $S$-module ${\rm Sym}(I)_i$ and how these properties are related to the geometry of the corresponding curve.}
\end{center}

\bigskip
We are now going to investigate the torsionfreeness and reflexivity
of the graded components of ${\rm Sym}(I)$. To do so we need to
estimate the height of ideals of minors of the matrices that appear
in parts (1) and (2) of Theorem \ref{chart}.
\begin{lemma}\label{hts} Adopt Data {\rm \ref{data1}}. Let $n$ be a positive integer   and
$\Upsilon_{n,1}$ be the $(d_1 +n) \times n$ matrix introduced in
Definition \ref{upsilon}. The following statements hold$\, :$
\begin{itemize}
\item[{\rm(1)}]
$\operatorname{ht} I_{n}(\Upsilon_{n,1}) \ge 2 \,${\rm;}
\item[{\rm(2)}]$\operatorname{ht} I_{n}(\Upsilon_{n,1}) = 3$
if and only if the first column of $\varphi$ does not have a
generalized zero.

\end{itemize}
\end{lemma}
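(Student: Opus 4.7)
The plan is to translate $\height I_n(\Upsilon_{n,1})$ into a purely linear-algebraic quantity — namely the $k$-dimension of the span of the coefficients of $g_1$ in $S_1$ — and then to interpret that dimension in terms of the first column of $\varphi$.

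Step 1 (the scheme-theoretic identification). By Definition~\ref{upsilon}, $\Upsilon_{n,1}$ represents the $S$-linear ``multiplication by $g_1$'' map $S[x,y]_{n-1}\to S[x,y]_{n-1+d_1}$ in the monomial bases. For any prime $\mathfrak p\subseteq S$, reduction modulo $\mathfrak p$ yields multiplication by the image $\bar g_1$ of $g_1$ in the domain $k(\mathfrak p)[x,y]$; hence $\Upsilon_{n,1}\otimes_Sk(\mathfrak p)$ has full rank $n$ iff $\bar g_1\neq 0$, i.e.\ iff some coefficient $c_{\ell,1}\notin\mathfrak p$. This gives the prime-by-prime equivalence
\[
\mathfrak p\supseteq I_n(\Upsilon_{n,1})\ \iff\ \mathfrak p\supseteq (c_{0,1},\ldots,c_{d_1,1}),
\]
so $V(I_n(\Upsilon_{n,1}))=V(c_{0,1},\ldots,c_{d_1,1})$ in $\Spec S$. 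Since $(c_{0,1},\ldots,c_{d_1,1})$ is generated by linear forms of $S$ it is already radical, so
\[
\sqrt{I_n(\Upsilon_{n,1})}\;=\;(c_{0,1},\ldots,c_{d_1,1})\,S,\qquad \height I_n(\Upsilon_{n,1})\;=\;\dim_kV,
\]
where $V:=\operatorname{span}_k\{c_{0,1},\ldots,c_{d_1,1}\}\subseteq S_1$.

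Step 2 (linear algebra of the coefficients). Write the first column of $\varphi$ as $[f_1,f_2,f_3]^{\rm T}$ with $f_i=\sum_\ell a_{\ell,i}x^\ell y^{d_1-\ell}$, $a_{\ell,i}\in k$. The identity $g_1=\sum_iT_if_i$ gives $c_{\ell,1}=\sum_ia_{\ell,i}T_i$, so $V$ is the row space of $A=(a_{\ell,i})$ and $\dim_kV=\operatorname{rank}A=\dim_k\operatorname{span}_k\{f_1,f_2,f_3\}$ inside $k[x,y]_{d_1}$. For (1) I would argue $\dim_kV\geq2$: otherwise the $f_i$ are all $k$-scalar multiples of a common form $h\in k[x,y]_{d_1}$, so $h$ divides every entry of the first column of $\varphi$, hence every $2\times2$ minor, hence every generator of $I=I_2(\varphi)$. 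But $I$ has height two, forcing $h$ to be a unit — a contradiction. For (2), $\dim_kV=3$ is precisely $k$-linear independence of $f_1,f_2,f_3$, which by the definition recalled in the Introduction (take $q\in k\setminus\{0\}$) is exactly the statement that no nonzero $p\in k^3$ satisfies $p[f_1,f_2,f_3]^{\rm T}=0$, i.e.\ that the first column of $\varphi$ admits no generalized zero.

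The main obstacle is the scheme-theoretic equality in Step~1; once the pointwise/multiplication interpretation of $\Upsilon_{n,1}$ is pinned down (and one observes that it holds at every prime, not merely at closed points, so that no algebraic closure hypothesis is needed), the remainder is routine. The height bound follows from the radicality of ideals generated by linear forms in a polynomial ring, and the two parts reduce to elementary linear algebra plus the observation that a common factor of the entries of a single column of a Hilbert-Burch matrix propagates to a common factor of the generators of the defining ideal.
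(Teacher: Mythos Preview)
Your proof is correct and reaches the same linear-algebraic core as the paper's, but via a somewhat different route. The paper reduces to $n=1$ by invoking the ideal identity $I_n(\Upsilon_{n,1})=I_1(\Upsilon_{1,1})^n$ (stated without proof), and then establishes the key equality $\mu(I_1(\Upsilon_{1,1}))=\mu(I_1(\varphi_1))$ by a two-directional argument that rewrites $g_1$ first as an $S_1$-linear combination of monomials in $x,y$ and then as an $R_{d_1}$-linear combination of $T_1,T_2,T_3$. You bypass the power identity entirely: interpreting $\Upsilon_{n,1}$ as multiplication by $g_1$ and checking injectivity over every residue field yields $\sqrt{I_n(\Upsilon_{n,1})}=(c_{0,1},\dots,c_{d_1,1})$ directly, uniformly in $n$. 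Your Step~2 then collapses the paper's $\mu=\mu$ argument to the single observation that row rank equals column rank for the coefficient matrix $A=(a_{\ell,i})$. Both arguments ultimately hinge on the fact that $\height I_1(\varphi_1)\ge 2$ (you phrase this as ``a common factor of the first column propagates to $I_2(\varphi)$''), and both read off part~(2) as linear (in)dependence of $f_1,f_2,f_3$. Your approach is a bit more conceptual and self-contained; the paper's ideal identity, once granted, gives a slightly sharper statement about $I_n(\Upsilon_{n,1})$ itself rather than only its radical.
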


\begin{proof} The ideal $I_n(\Upsilon_{n,1})$ is equal to the $n^{\text{th}}$ power of the ideal $I_1(\Upsilon_{1,1})$; and, for any given ideal $J$ in the polynomial ring $S$, the ideals $J$ and $J^n$ have the same height. Therefore, it suffices to prove the result when $n=1$. On the other hand, the ideal  $I_1(\Upsilon_{1,1})$ is generated by linear forms in $S_1$; so the height of $I_{1}(\Upsilon_{1,1})$ is equal to the minimal number of generators of  $I_{1}(\Upsilon_{1,1})$.
Recall that $$[y^{d_1},xy^{d_1-1},\dots,x^{d_1}]\,\Upsilon_{1,1} =g_1=[T_1,T_2,T_3]\,\varphi_1,$$where $\varphi_1$ is the first column of $\varphi$. The entries of $\varphi_1$ generate an ideal of height at least $2$ because $\operatorname{ht} I_2(\varphi)=2$.
To complete the proof it suffices to show that
\begin{equation}\label{mu} \mu(I_1(\Upsilon_{1,1}))=\mu(I_1(\varphi_1)).\end{equation}
Indeed, suppose, for the time being, that (\ref{mu}) has been established. Then
$$2\le \operatorname{ht}(I_1(\varphi_1))\implies 2\le \mu (I_1(\varphi_1))=\mu(I_1(\Upsilon_{1,1}))=\operatorname{ht}(I_1(\Upsilon_{1,1}))=\operatorname{ht}(I_n(\Upsilon_{n,1})) $$and (1) holds. Also,
$$\operatorname{ht} I_{n}(\Upsilon_{n,1}) \le 2 \iff \mu(I_1(\Upsilon_{1,1}))\le 2\iff \mu (I_1(\varphi_1))\le 2\iff \text{$\varphi_1$ has a generalized zero.}$$

Now we prove (\ref{mu}). Suppose that $I_1(\Upsilon_{1,1})$ is minimally generated by $\lambda_1,\dots,\lambda_s$ in $S_1$. It follows that
$$\Upsilon_{1,1}= \lambda_1 \rho_1+\dots+\lambda_s\rho_s$$ for  column vectors $\rho_\ell$ in $\operatorname{Mat}_{(d_1+1)\times 1}(k)$. For $1\le \ell\le s$, let $\xi_\ell$ be the homogeneous form  $$\xi_{\ell}= [y^{d_1},xy^{d_1-1},\dots,x^{d_1}]\rho_{\ell}$$ in $R_{d_1}$ and let $Z_{\ell}$ be the column vector of three constants with $\lambda_{\ell}=[T_1,T_2,T_3]Z_{\ell}$. We have
$$[T_1,T_2,T_3]\varphi_1=g_1=[y^{d_1},xy^{d_1-1},\dots,x^{d_1}]\Upsilon_{1,1}
=[y^{d_1},xy^{d_1-1},\dots,x^{d_1}](\lambda_1 \rho_1+\dots+\lambda_s\rho_s)
$$$$= \xi_1\lambda_1+\dots+\xi_s\lambda_s= \xi_1[T_1,T_2,T_3]Z_{1}+\dots+\xi_s[T_1,T_2,T_3]Z_{s}=[T_1,T_2,T_3](\sum_{\ell=1}^s\xi_{\ell}Z_{\ell}).$$The entries of the $3\times 1$ vector $\varphi_1-\sum_{\ell}\xi_{\ell}Z_{\ell}$ are homogeneous forms of degree $d_1$ in $R$; hence, this vector cannot be in the kernel of $[T_1,T_2,T_3]$ unless it is already zero. Thus, $\varphi_1= \sum_{\ell}\xi_{\ell}Z_{\ell}$.
 Let $V$ be the subspace of $R_{d_1}$ which is spanned by the entries of $\varphi_1$. We have shown that $V$ is a subspace of the vector space spanned by $\xi_1,\dots,\xi_s$.
It follows that
$$\mu(I_1(\varphi_1))=\dim V\le s= \mu(I_1(\Upsilon_{1,1})).$$ One may read the calculation in the other direction to see that $\mu(I_1(\Upsilon_{1,1}))\le \mu(I_1(\varphi_1))$.
\QED \end{proof}

\bigskip

\begin{remark}\label{curve} Adopt Data \ref{data1} and assume $k$ is algebraically closed. The
signed maximal minors $h_1, h_2, h_3$ of $\varphi$
define a morphism

$$\xymatrix{\mathbb{P}^1_k \ar[rr]
^{\phantom{XXX}[h_1:
h_2: h_3]\phantom{XXX}}&&  \mathbb{P}_k^2}$$
whose image is a rational plane curve $\mathcal C$.  The degree of the curve $\mathcal C$ 
satisfies the equality $\deg \mathcal C= d/r$, where $r$
is the degree of the field extension
$[\operatorname{Quot}(k[R_d]):\operatorname{Quot} (k[I_d])]$.  In
particular, $r=1$ if and only if the parametrization is birational
onto its image. \end{remark}

As it turns out, the heights of various ideals of
minors of interest can be expressed in terms of the singularities of
the curve $\mathcal C$. 

\medskip
\begin{lemma}\label{ckpu}Adopt Data {\rm\ref{data1}} with $d_1=d_2$. Let $C$ be the $(d_1 +1) \times 2$ matrix $C=\left(\begin{matrix}
{\Upsilon_{1,1}}&
 {\Upsilon_{1,2}}\end{matrix}\right) $ for $\Upsilon_{1,1}$ and $\Upsilon_{1,2}$ as   introduced in Definition {\rm \ref{upsilon}}, and let $\mathcal C$ be the curve of Remark {\rm\ref{curve}}. The following statements hold{\rm:}
\begin{itemize}
\item[{\rm(1)}]
$\operatorname{ht} I_{2}(C) \ge 2$ if and only if $I_1(\varphi)$ is
not a complete intersection{\rm;} furthermore, if $k$ is algebraically closed, then the previous conditions 
  hold if and only if the curve $\mathcal C$ is singular;  
\item[{\rm(2)}]$\operatorname{ht} I_{2}(C) = 3$ if and only if $\varphi$ does not have a
generalized zero{\rm;}  furthermore, if $k$ is algebraically closed, then the  previous conditions 
  hold if
and only if the curve $\mathcal C$ is singular and its singularities
have multiplicity at most $(\deg \mathcal C)/2-1$.
\end{itemize}
\end{lemma}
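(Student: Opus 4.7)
Both equivalences will be deduced from the observation that, up to a twist, $C$ is a presentation matrix of the $S$-module $\Sym(I)_{d_1}$. Extracting the bi-graded $(x,y)$-degree $d_1$ strand of the Koszul resolution (\ref{Kos}) (the $B(-d_1-d_2,-2)$ contribution vanishes because $-d_2<0$) yields the short exact sequence
\[
0\longrightarrow S(-1)^2\xrightarrow{\,C\,} S^{d_1+1}\longrightarrow \Sym(I)_{d_1}\longrightarrow 0,
\]
so $I_2(C)$ coincides with the Fitting ideal $\operatorname{Fitt}_{d_1-1}(\Sym(I)_{d_1})$, using $\operatorname{rank}_S\Sym(I)_{d_1}=d_1-1$ from Theorem \ref{chart}. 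For part (2), evaluating $C$ at a point $a=(a_1,a_2,a_3)\in k^3\setminus\{0\}$ one sees that the two columns of $C(a)$ are the coefficient vectors in $R_{d_1}$ of $a^{\rm T}\varphi_1$ and $a^{\rm T}\varphi_2$. Hence $\operatorname{rank} C(a)\le 1$ iff these two polynomials are linearly dependent, iff there exists $b\in k^2\setminus\{0\}$ with $a^{\rm T}\varphi\, b^{\rm T}=0$, which is precisely the condition that $(a,b)$ be a generalized zero. Since $\dim S=3$, we have $\operatorname{ht} I_2(C)=3$ iff $V(I_2(C))=\emptyset$ in $\mathbb{P}^2$, iff no such $a$ exists; this proves the first equivalence of (2).

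For part (1), $I_1(\varphi)$ is a complete intersection iff $\mu(I_1(\varphi))=2$ (as $\dim R=\operatorname{ht} I_1(\varphi)=2$), iff the six entries of $\varphi$ span a two-dimensional subspace $W\subseteq R_{d_1}$. Assume this holds and choose a basis $f_1,f_2$ of $W$; then $\varphi=f_1M_1+f_2M_2$ for constant matrices $M_\alpha\in k^{3\times 2}$. Expanding coefficients in the monomial basis $y^{d_1},xy^{d_1-1},\dots,x^{d_1}$ of $R_{d_1}$ produces a factorization $C=B\cdot D$, where $B\in\operatorname{Mat}_{(d_1+1)\times 2}(k)$ has columns equal to the coefficient vectors of $f_1,f_2$ (so $\operatorname{rank} B=2$) and $D$ is a $2\times 2$ matrix of linear forms in $S$. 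A Cauchy--Binet calculation yields $I_2(C)=I_2(B)\cdot(\det D)=(\det D)$, so $\operatorname{ht} I_2(C)\le 1$. The converse---that $\operatorname{ht} I_2(C)\ge 2$ forces $I_1(\varphi)$ to fail to be a complete intersection---is the main obstacle: here one invokes Theorem \ref{future} and Corollary \ref{2.12} of Subsection \ref{DPC}.B, which relate reflexivity of the $S$-modules $\Sym(I)_i$ precisely to the failure of $I_1(\varphi)$ to be a complete intersection.

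Finally, the geometric interpretations (under $k$ algebraically closed) follow from two standard translations. For (1), the factorization $\varphi=f_1M_1+f_2M_2$ forces every signed maximal minor $h_i$ to lie in $\operatorname{span}_k(f_1^2,f_1f_2,f_2^2)$, so $\eta$ factors as $\mathbb{P}^1\xrightarrow{[f_1:f_2]}\mathbb{P}^1\to\mathbb{P}^2$ via the quadratic Veronese map; hence $\mathcal{C}$ lies on a conic, and birationality of $\eta$ forces $d=\deg\mathcal{C}\le 2$, so $d_1=d_2=1$ and $\mathcal{C}$ is smooth. Conversely, when $d_1=d_2\ge 2$ we have $d\ge 4$, and any rational plane curve of degree $\ge 3$ is singular by the genus formula. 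For (2), we invoke the dictionary (cf.\ \cite{CKPU} and Sections \ref{d1=d2} and \ref{sextic} of the present paper) translating generalized zeros of $\varphi$ into singularities of $\mathcal{C}$ of multiplicity at least $d_1=(\deg\mathcal{C})/2$; combined with (1), this delivers the geometric part of (2).
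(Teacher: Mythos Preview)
Your proposal has a \textbf{circular reference} that creates a genuine gap. In the proof of part (1) you write that for the converse direction ``one invokes Theorem \ref{future} and Corollary \ref{2.12}.'' But Theorem \ref{future} is proved \emph{by appealing to Lemma \ref{ckpu}} (see the last line of its proof), and Corollary \ref{2.12} in turn relies on Theorem \ref{future}. You cannot use either to establish Lemma \ref{ckpu}.

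There is also confusion about which direction is missing. You correctly show that if $I_1(\varphi)$ is a complete intersection then $I_2(C)=(\det D)$ is principal, so $\operatorname{ht} I_2(C)\le 1$; contrapositively, $\operatorname{ht} I_2(C)\ge 2$ already forces $I_1(\varphi)$ not to be a complete intersection. What remains is the \emph{other} direction, namely $\operatorname{ht} I_2(C)\le 1 \Rightarrow I_1(\varphi)$ is a complete intersection, and this you have not addressed except by the circular citation. The paper handles this via reparametrization (passing to a matrix $\varphi'$ giving a birational parametrization) together with the equivalence ``$\mathcal{C}$ nonsingular $\iff d/r=2 \iff I_1(\varphi')=(u,v) \iff I_1(\varphi')$ is a complete intersection $\iff I_1(\varphi)$ is a complete intersection,'' and by showing $I_2(C)=I_2(C')$ for the reparametrized matrix.

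A second gap: your geometric arguments tacitly assume $\eta$ is birational onto $\mathcal{C}$, but Data \ref{data1} does not assume this. When you write ``birationality of $\eta$ forces $d=\deg\mathcal{C}\le 2$'' or invoke the genus formula with $\deg\mathcal{C}=d$, you are ignoring the possibility $\deg\mathcal{C}=d/r$ with $r>1$. The paper deals with this systematically by reparametrizing via $\varphi'$ over $k[u,v]$ before invoking the results from \cite{CKPU}.

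On the positive side, your direct argument for the first equivalence in (2) --- evaluating $C$ at $a\in k^3\setminus\{0\}$ and reading off that $\operatorname{rank} C(a)\le 1$ precisely encodes a generalized zero --- is cleaner and more elementary than the paper's route through \cite{CKPU}, at least over an algebraically closed field. (For general $k$ you would still need to justify that a generalized zero over $\bar{k}$ descends to one over $k$, or, as the paper does, first reduce to the algebraically closed case.) Likewise your factorization $C=B\cdot D$ when $I_1(\varphi)$ is a complete intersection is a nice concrete argument for that implication.
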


\begin{proof} We may harmlessly assume that $k$ is algebraically
closed.
Let $r$ be the
degree of the field extension
$[\operatorname{Quot}(k[R_d]):\operatorname{Quot} (k[I_d])]$, as described in Remark \ref{curve}. Then there exists a regular sequence $u,v$ in $R_r$  so that
$I=I_2(\varphi')R$ for some $3 \times 2$ matrix $\varphi'$ whose
entries are homogeneous polynomials of degree $(\deg \mathcal
C)/2=d/2r$ in the variables $u,v$ (see \cite{KPU-B}). The signed
maximal minors of $\varphi'$ provide a birational parametrization of
the same curve $\mathcal C$. Let $R'=k[u,v]$ and $I'$ be the ideal $I_2(\varphi')$ of $R'$. Define elements
$g_1', g_2'$ in $R'[T_1,T_2,T_3]$ via the equation  $[g_1', g_2'] =
[T_1,T_2,T_3] \cdot \varphi' \ $. Use these data to obtain matrices
$\Upsilon'_{n,m}$ as in Definition \ref{upsilon}. Finally, let $C'$
be the matrix $\left(\begin{matrix} {\Upsilon'_{1,1}}&
 {\Upsilon'_{1,2}}\end{matrix}\right)$. From \cite[3.14(2)]{CKPU} we
know that $\operatorname{ht} I_2(C') \ge 2$ if and only if the curve
$\mathcal C$ is singular, and $\operatorname{ht} I_2(C') =3$ if and
only if the curve $\mathcal C$ is singular and its singularities
have multiplicity at most $(\deg \mathcal C)/2-1$. (The result from \cite{CKPU} is
stated assuming the birationality of the parametrization. A complete proof of the geometric interpretation of $\operatorname{ht} I_2(C')\ge 2$ uses the fact that a rational plane curve of degree at least three is singular.) 

The curve $\mathcal C$ is nonsingular if and only if its homogeneous
coordinate ring $k[I'_{d/r}]$ is normal. Since the parametrization is
birational, the latter obtains if and only if
$k[I'_{d/r}]=k[R'_{d/r}]$ or, equivalently, $3=\dim _k I'_{d/r}=\dim
_k R'_{d/r}$. This holds if and only if $d/r=2$. The last equality
means that $I_1(\varphi')$ is generated by linear forms,
equivalently $I_1(\varphi')=(u,v)R'$. The latter holds if and only
if $I_1(\varphi')$ is a complete intersection, again because the
parametrization is birational. Finally, the $R'$-ideal
$I_1(\varphi')$ is a complete intersection if and only if the
$R$-ideal $I_1(\varphi)$ is.

On the other hand, the curve $\mathcal C$ is singular and its
singularities have multiplicity at most ${(\deg \mathcal C)/2-1}$ if
and only if $\varphi'$ does not have a generalized zero, as was
shown in part (4) of \cite[1.9]{CKPU}. Notice that  $\varphi'$ has a
generalized zero if and only if $\varphi$ does.

It remains to show that $I_2(C')=I_2(C)$. Extend the ordered set
$v^{d/r}, \ldots, u^{d/r}$ of monomials in $u, v$ of degree $d/r$ to
an ordered basis of $R_d$, which we call $b_0, \ldots, b_d$. Define
a $d+1 \times 2$ matrix $D$ with entries in $S_1$ via the equality
$[g_1',g_2']=[b_0, \ldots, b_d] \cdot D$. Notice that $D=
\left[\begin{matrix} C' \\ 0
\end{matrix} \right]$, and hence $I_2(C')=I_2(D)$. Finally, the
matrix $D$ is obtained from $C$ by elementary row operations that
correspond to the transition from $b_0, \ldots, b_{d}$ to the
monomial basis $y^d, \ldots, x^d$ of $R_d$. Therefore
$I_2(D)=I_2(C)$. \QED
\end{proof}

\medskip

\begin{theorem}\label{future}  Adopt Data {\rm \ref{data1}} and let $\mathcal C$ be the curve of Remark {\rm\ref{curve}}. The following statements hold.

\begin{itemize}
\item[{\rm(1)}] If $d_1\le i\le d_2-1$, then
\begin{itemize}
\item[{\rm(a)}]
the $S$-module ${\rm Sym}(I)_{i}$ is torsionfree\,{\rm;} and
\item[{\rm(b)}] the $S$-module ${\rm Sym}(I)_{i}$ is reflexive if
and only if the first column of $\varphi$ does not have a
generalized zero{\rm;} furthermore, if $k$ is algebraically closed, then the previous conditions hold if
and only if the singularities of the curve $\mathcal C$ have
multiplicity at most $d_1(\deg \mathcal C)/d$.
\end{itemize}

\item[{\rm(2)}] If $i=d_1=d_2$, then
\begin{itemize}
\item[{\rm(a)}] the $S$-module ${\rm Sym}(I)_{i}$ is torsionfree if and only if $I_1(\varphi)$ is
not a complete intersection{\rm;} furthermore, if $k$ is algebraically closed, then the previous conditions hold if and only if the curve $\mathcal C$ is singular\,{\rm;} and
\item[{\rm(b)}] the $S$-module ${\rm Sym}(I)_{i}$ is reflexive if
and only if  $\varphi$ does not have a generalized zero{\rm;} furthermore, if $k$ is algebraically closed, then the previous conditions hold if and only if the curve
$\mathcal C$ is singular and its singularities have multiplicity at
most $(\deg \mathcal C)/2-1$.
\end{itemize}
\end{itemize}
\end{theorem}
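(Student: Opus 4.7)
The strategy is to combine the presentations of $\operatorname{Sym}(I)_i$ coming from Theorem \ref{chart} with a standard criterion for torsionfreeness and reflexivity of the cokernel of an injective map of free $S$-modules (in terms of heights of maximal-minor ideals), and then quote the height calculations in Lemma \ref{hts} and Lemma \ref{ckpu}.

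First I would isolate the relevant strands of Theorem \ref{chart}. For $d_1 \le i \le d_2-1$, the index $\delta-i$ lies in $[d_1-1,d_2-2]$, so part (2) of Theorem \ref{chart} applies and yields
$$0 \longrightarrow S(-1)^{i-d_1+1} \xrightarrow{\,\Upsilon_{i-d_1+1,1}\,} S^{i+1} \longrightarrow \operatorname{Sym}(I)_i \longrightarrow 0.$$
For $i=d_1=d_2$ one has $\delta-i=d_1-2$, so part (1) of Theorem \ref{chart} gives
$$0 \longrightarrow S(-1)^2 \xrightarrow{\,C\,} S^{d_1+1} \longrightarrow \operatorname{Sym}(I)_i \longrightarrow 0$$
with $C=(\Upsilon_{1,1},\Upsilon_{1,2})$, precisely the matrix studied in Lemma \ref{ckpu}.

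Next I would record the following general criterion: if $\psi: S^n \hookrightarrow S^m$ is an injection of free modules over the three-dimensional regular ring $S = k[T_1,T_2,T_3]$, then $\operatorname{coker}(\psi)$ is torsionfree (respectively reflexive) if and only if $\operatorname{ht} I_n(\psi) \ge 2$ (respectively $\ge 3$). The verification is standard: at a prime $P$ with $P\not\supseteq I_n(\psi)$ the localized map $\psi_P$ splits and $\operatorname{coker}(\psi)_P$ is free, while at $P\supseteq I_n(\psi)$ the localization $\psi_P$ is still injective of non-maximal rank modulo $P$, so $\operatorname{pd}(\operatorname{coker}(\psi)_P)=1$, and Auslander--Buchsbaum yields $\operatorname{depth}(\operatorname{coker}(\psi)_P)=\operatorname{ht}(P)-1$. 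Translating Serre's conditions $S_1$ and $S_2$ (which over the Gorenstein ring $S$ are equivalent to torsionfreeness and reflexivity, respectively) into height conditions on $I_n(\psi)$ gives the claim.

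Plugging the criterion into the presentations above yields the algebraic assertions. For part (1), Lemma \ref{hts}(1) gives $\operatorname{ht} I_{i-d_1+1}(\Upsilon_{i-d_1+1,1}) \ge 2$ unconditionally, which proves (1)(a), and Lemma \ref{hts}(2) strengthens this to height $3$ precisely when the first column of $\varphi$ has no generalized zero, yielding the algebraic half of (1)(b). For part (2), Lemma \ref{ckpu}(1) and (2) directly deliver the stated algebraic conditions together with their geometric translations. For the remaining geometric half of (1)(b), I would reparametrize to obtain a birational map with Hilbert--Burch matrix $\varphi'$ whose columns have degrees $d_1/r$ and $d_2/r$, where $r$ is the degree of the original parametrization, observe that generalized zeros in the first column of $\varphi$ correspond to generalized zeros in the first column of $\varphi'$, and then invoke the result from [CKPU] identifying this condition with "every singularity of $\mathcal C$ has multiplicity at most $d_1/r = d_1\,\deg\mathcal{C}/d$". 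This final geometric translation is the main substantive input drawn from outside the present section; the rest of the argument is essentially formal bookkeeping.
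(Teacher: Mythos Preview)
Your proposal is correct and follows essentially the same route as the paper: derive the presentations of $\operatorname{Sym}(I)_i$ from Theorem~\ref{chart}, translate torsionfreeness and reflexivity into height conditions on the maximal-minor ideals of the presenting matrix, and then invoke Lemmas~\ref{hts} and~\ref{ckpu} together with the reparametrization argument and \cite[Cor.~1.9]{CKPU} for the remaining geometric equivalence. Your added justification of the height criterion via Auslander--Buchsbaum and Serre's conditions is a welcome elaboration of what the paper states without proof.
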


\begin{proof} We first argue the third equivalence in item
$($1.b$)$. Again, as in the proof of Lemma \ref{ckpu}, one obtains a
regular sequence $u,v$ of forms of degree $d/(\deg \mathcal C)$ so
that $I=I_2(\varphi')R$ for some $3 \times 2$ matrix $\varphi'$
whose entries in position $i,j$ are homogeneous polynomials of
degree $d_j(\deg \mathcal C)/d$ in the variables $u,v$ (see
\cite{KPU-B}). Thus one reduces to the case of a birational
parametrization. Now part (4) of \cite[1.9]{CKPU} shows that $\mathcal C$ has
a singularity of multiplicity at least $d_1(\deg \mathcal C)/d+1$ if
and only if the first column of $\varphi'$ has a generalized zero,
or, equivalently, the first column of $\varphi$ has a generalized
zero.

Write $n=i-d_1+1$. From Theorem \ref{chart} we know that the
$S$-module ${\rm Sym}(I)_{i}$ has projective dimension at most one
and that it is presented by $\Upsilon_{n,1}$ in the setting of (1)
and by $C=\left(\begin{matrix} {\Upsilon_{1,1}}&
 {\Upsilon_{1,2}}\end{matrix}\right) $ in the setting of (2).
 Thus ${\rm
Sym}(I)_{i}$ is torsionfree if and only if $I_{n}(\Upsilon_{n,1})$
or $I_2(C)$, respectively, has height at least two. Likewise, ${\rm
Sym}(I)_{i}$ is reflexive if and only if this height is at least 3.
Now it remains to appeal to Lemmas \ref{hts} and  \ref{ckpu}. \QED
\end{proof}

\begin{corollary}\label{2.12} Adopt Data {\rm \ref{data1}} and let $\mathcal C$ be the curve of Remark {\rm\ref{curve}}.
The following statements hold.

\begin{itemize}
\item[{\rm(1)}] If $d_1-1\le i\le d_2-2$, then the
$S$-module $\mathcal A_{i}$ is free if and only if the first column
of the matrix $\varphi$ has a generalized zero{\rm;} furthermore, if $k$ is algebraically closed, then the previous conditions hold if  and only if the curve $\mathcal C$ has a singularity of multiplicity equal to $d_2(\deg \mathcal
C)/d$.
\item[{\rm(2)}] If $i=d_1=d_2$, then
the $S$-module $\mathcal A_{i}$ is free if and only if $\mu(I_2(C))\le
4${\rm;} furthermore, if $k$ is algebraically closed, then the previous conditions hold if  and only if
there are at least two singularities of multiplicity $(\deg \mathcal
C)/2$ on or infinitely near $\mathcal C$.
\end{itemize}
\end{corollary}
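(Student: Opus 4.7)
The unifying tool is Corollary~\ref{cor2.2}(1), which gives the graded $S$-module isomorphism $\mathcal{A}_i \cong \mathrm{Hom}_S(M,S)(-2)$ with $M := \mathrm{Sym}(I)_{\delta-i}$. Since $M$ has projective dimension at most one by Theorem~\ref{chart}, dualizing turns freeness of $\mathcal{A}_i$ into the condition $\mathrm{projdim}_S \mathrm{Ext}^1_S(M,S) \leq 2$ (equivalently $\mathrm{depth}_S \mathrm{Ext}^1_S(M,S) \geq 1$ when nonzero). This will be the skeleton on which both parts hang.

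For part~(1) the range $d_1-1 \leq i \leq d_2-2$ falls in Theorem~\ref{chart}(2), which presents $M$ minimally by the nontrivial matrix $\Upsilon_{d_2-i-1,1}$ (with $d_2-i-1 \geq 1$), so $M$ is not free; and Theorem~\ref{future}(1) says $M$ is always torsionfree and is reflexive precisely when the first column of $\varphi$ has no generalized zero. The easy implication then runs: if no generalized zero, then $M = M^{**}$, and freeness of $\mathcal{A}_i = M^*(-2)$ would dualize to give a free $M$, contradicting the above. For the converse, when the first column has a generalized zero, I would invoke Theorem~\ref{claudia} of Section~\ref{3}, which exhibits an explicit bi-homogeneous $S$-basis for $\mathcal{A}_{\geq d_1-1}$ as a free $S$-module; splitting by $x,y$-degree then yields freeness of each $\mathcal{A}_i$ in our range. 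The geometric equivalence follows from Theorem~\ref{future}(1): combining ``first column of $\varphi$ has a generalized zero iff $\mathcal{C}$ has a singularity of multiplicity $\geq d_1(\deg \mathcal{C})/d + 1$'' with the standard upper bound of $d_2(\deg \mathcal{C})/d$ on singularity multiplicities for rational plane curves with syzygy degrees $(d_1, d_2)$ (already used in the proof of Theorem~\ref{future}, see \cite{KPU-B}, \cite{CKPU}), and since $d_1 < d_2$ forces $d_1 + 1 \leq d_2$, the admissible interval of multiplicities collapses to the single value $d_2(\deg \mathcal{C})/d$.

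For part~(2), with $d_1 = d_2$, the matrix $C = (\Upsilon_{1,1} \mid \Upsilon_{1,2})$ of Lemma~\ref{ckpu} appears, via Theorem~\ref{chart}(1) applied to the appropriate strand, as the presenting matrix relevant to the duality, so the $\mathrm{Ext}^1$-module in question is $\mathrm{coker}(C^T)$, supported on $V(I_2(C))$. By Lemma~\ref{ckpu} this support has height $2$ or $3$; height $3$ forces $\mathrm{coker}(C^T)$ to be of finite length, hence of depth $0$ and projective dimension $3$, ruling out freeness of $\mathcal{A}_i$. In the complementary case the main obstacle is to show that the depth of $\mathrm{coker}(C^T)$ is at least one precisely when $\mu(I_2(C)) \leq 4$; I expect this to follow from a careful analysis of the minimal free resolution of $\mathrm{coker}(C^T)$, exploiting that $C$ has two columns of linear forms, so $I_2(C)$ sits inside the six-dimensional space $S_2$ and its minimal number of generators controls the presence of embedded associated primes in $\mathrm{coker}(C^T)$. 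The final geometric translation of $\mu(I_2(C)) \leq 4$ into the count of multiplicity-$(\deg \mathcal{C})/2$ singularities on or infinitely near $\mathcal{C}$ would be drawn from the correspondence established in \cite{CKPU}, the paper whose investigation was initiated by a preliminary version of Section~\ref{d1=d2}; combined with Lemma~\ref{ckpu}'s geometric assertion, this closes the circle of equivalences.
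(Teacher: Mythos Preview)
Your argument for part~(1) is essentially the paper's. Both use the duality $\mathcal A_i\cong\operatorname{Hom}_S(\operatorname{Sym}(I)_{\delta-i},S)(-2)$; for ``no generalized zero $\Rightarrow$ $\mathcal A_i$ not free'' you invoke reflexivity of $M=\operatorname{Sym}(I)_{\delta-i}$ (Theorem~\ref{future}(1.b)) to get $M\cong(\mathcal A_i)^*$, and then observe $M$ is not free from its presentation in Theorem~\ref{chart}(2) --- exactly as the paper does. For the converse both cite Theorem~\ref{claudia}. One correction to your geometric step: the ``interval collapses'' not because $d_1<d_2$ alone, but because of the dichotomy (multiplicity $\le d_1(\deg\mathcal C)/d$ or $=d_2(\deg\mathcal C)/d$) from \cite[Cor.~1.9]{CKPU}; this is precisely what the paper cites.

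Part~(2), however, has a real gap. The paper does not argue via $\operatorname{depth}\operatorname{coker}(C^{\rm T})$ at all: it simply forwards-references Theorem~\ref{andy}, whose proof uses the classification of $\varphi$ (equivalently $C$) into the eleven $G$-orbits of \cite[Thm.~4.9]{CKPU} and computes the resolution of $\ker C^{\rm T}$ explicitly for a canonical representative of each orbit. Your proposed direct route is not carried out --- the central claim, that $\mu(I_2(C))\le 4$ is equivalent to $\operatorname{depth}\operatorname{coker}(C^{\rm T})\ge 1$, is left as an expectation with only a heuristic about embedded primes. Moreover, your height-$3$ case disposes only of the orbits with $\mu(I_2(C))=6$; the two orbits with $\mu(I_2(C))=5$ (those labeled $(c,\mu_5)$ and $(c,\mu_4)$ in Theorem~\ref{andy}) satisfy $\operatorname{ht}I_2(C)=2$, yet $\ker C^{\rm T}$ is \emph{not} free there. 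So the entire substance of the equivalence lies in the ``complementary case'' you leave open, and there is no evident mechanism linking the single integer $\mu(I_2(C))$ to the depth of $\operatorname{coker}(C^{\rm T})$ short of the orbit-by-orbit check the paper performs. The geometric translation via \cite[3.22]{CKPU} is the same in both.
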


\begin{proof} We prove part (1).
If the first column of the matrix $\varphi$ does not have  a
generalized zero, then ${\rm Sym}(I)_{\delta -i}$ is reflexive
according to Theorem \ref{future} part (1.b). Thus, part (1) of Corollary \ref{cor2.2}
shows that ${\rm Sym}(I)_{\delta -i} \simeq {\rm Hom}_S(\mathcal
A_i,S(-2))$. Since ${\rm Sym}(I)_{\delta -i} $ is not free it
follows that $\mathcal A_i$ cannot be free either. Conversely, if
the first column of the matrix $\varphi$ has a generalized zero then
$\mathcal A_i$ is free as will be shown in Theorem \ref{claudia}.
The third equivalence in item (1) is  parts (1), (2), and (4) of  \cite[1.9]{CKPU}, after reducing to the case of a birational parameterization.

The first equivalence of part (2) will be proved in Theorem
\ref{andy}. The second equivalence follows from \cite[3.22]{CKPU}
after reducing to the case of a birational parametrization. \QED
\end{proof}

\bigskip
\begin{center}
{\sc \ref{DPC}.C \quad The duality is given by multiplication.}
\end{center}

In (\ref{sectgoal}), we promised an explicit perfect pairing $\mathcal A_i\otimes_S \Sym(I)_{\delta-i}\lto A_{\delta}=S\cdot{\rm syl}$. So far, in part (1) of Corollary \ref{cor2.2}, we showed that $\mathcal A_i$ is isomorphic to ${\rm Hom}_S({\rm Sym}(I)_{\delta-i},S(-2))$. In Theorem \ref{pp} we prove that the abstract isomorphism of Corollary \ref{cor2.2} is given by multiplication. The other highlight of the present subsection is Corollary \ref{link}, where we prove that the ${\rm Sym}(I)$-ideals $\mathcal A _{\ge i}$
and $0 :_{{\rm Sym}(I)} {\mathfrak m}^{d-1-i}$ are equal.  We use this equality in subsection 2.D to record explicit generators for the $S$-modules $\mathcal A_i$ when the equality represents linkage. The Sylvester element ${\rm syl}$ is one of these explicit generators. It is  introduced in Remark \ref{???} of subsection 2.D.

\begin{theorem}\label{pp} Adopt Data {\rm \ref{data1}}.
For each $i$, the multiplication map $\mathcal A_i\otimes {\rm Sym}(I)_{\delta-i}\longrightarrow  \mathcal A _{\delta}$ induces a homogeneous isomorphism of $S$-modules
$$\mathcal A_i \longrightarrow {\rm Hom}_S({\rm Sym}(I)_{\delta-i},\mathcal A _{\delta}).$$
\end{theorem}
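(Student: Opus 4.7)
My plan is to combine the abstract isomorphism of Corollary~\ref{cor2.2}(1) with a generic analysis of the complete intersection $\Sym(I)\otimes_S\operatorname{Quot}(S)$ to exhibit $\mu_i$ as a homogeneous $S$-linear injection whose source and target share the same Hilbert function, and then to conclude by dimension-counting in each graded component.

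First, since $\mathcal A$ is an ideal of $\Sym(I)$, the bigraded multiplication $\mathcal A\cdot\Sym(I)\subseteq \mathcal A$ restricts to a bigraded $S$-bilinear pairing $\mathcal A_i\otimes_S\Sym(I)_{\delta-i}\to \mathcal A_\delta$ which induces the homogeneous $S$-linear map $\mu_i$ of the statement. Using Corollary~\ref{cor2.2}(3) to identify $\mathcal A_\delta\cong S(-2)$ and then Corollary~\ref{cor2.2}(1) to identify $\operatorname{Hom}_S(\Sym(I)_{\delta-i},S(-2))\cong \mathcal A_i$ as graded $S$-modules, one sees that the source and target of $\mu_i$ are finitely generated graded $S$-modules with identical Hilbert functions (their finite generation and finite-dimensional graded components can be read off Theorem~\ref{chart}).

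To establish the injectivity of $\mu_i$ I would pass to $K=\operatorname{Quot}(S)$. Since $g_1,g_2$ are a regular sequence on $B$ and $S\hookrightarrow K$ is flat, they remain a regular sequence on $K[x,y]$, so $\Sym(I)\otimes_SK\cong K[x,y]/(g_1,g_2)$ is a graded Artinian complete intersection, hence Gorenstein with one-dimensional socle concentrated in $xy$-degree $\delta=d_1+d_2-2$. Being $\mathfrak m$-torsion, this $K$-algebra equals its own ${\rm H}^0_{\mathfrak m}$, and since local cohomology commutes with flat base change one has $\mathcal A_i\otimes_SK=\Sym(I)_{i,K}$. Thus $\mu_i\otimes_SK$ is precisely the classical Macaulay pairing of a graded Artinian Gorenstein $K$-algebra, which is perfect; in particular it is injective. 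Reflexivity of $\mathcal A_i$ (Corollary~\ref{cor2.2}(4)) gives torsion-freeness over the domain $S$, so injectivity at the generic point propagates to $\mu_i$ itself. Combining these ingredients, $\mu_i$ is a homogeneous injection between graded $S$-modules with the same Hilbert function, so each graded component is an injection of finite-dimensional $k$-vector spaces of equal dimension and therefore an isomorphism; hence so is $\mu_i$.

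The main obstacle is that Corollary~\ref{cor2.2}(1) yields only an abstract, Koszul-theoretic isomorphism between $\mathcal A_i$ and $\operatorname{Hom}_S(\Sym(I)_{\delta-i},S(-2))$, and directly matching that isomorphism with the multiplication map $\mu_i$ would be delicate. The strategy above sidesteps that matching problem by using Corollary~\ref{cor2.2}(1) purely to equate Hilbert functions, leaving the generic Gorenstein duality to supply the injectivity that upgrades an injection-between-equals to an isomorphism.
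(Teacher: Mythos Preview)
Your proof is correct and takes a genuinely different, more elementary route than the paper's.

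Both arguments share the same first move: localize at the generic point of $S$, where $\Sym(I)\otimes_S K$ becomes a graded Artinian Gorenstein algebra and the multiplication pairing is the classical socle pairing, hence perfect. From there the two diverge. The paper uses reflexivity of the source and target to say that it suffices to verify the map is an isomorphism at all primes of height at most one; having handled height zero, it then checks surjectivity at each height-one prime $\mathfrak p$ by a lifting argument: one must show that any $\theta\in\operatorname{Hom}_{S_{\mathfrak p}}(\Sym(I)_{\delta-i,\mathfrak p},\mathcal A_{\delta,\mathfrak p})$ extends to a $\Sigma_{\mathfrak p}$-linear map $\Sigma_{\mathfrak p}/\mathfrak m^{\delta-i+1}\Sigma_{\mathfrak p}\to\Sigma_{\mathfrak p}$, and the obstruction lies in an $\operatorname{Ext}^1$ which is killed by local duality over the one-dimensional Gorenstein ring $\Sigma_{\mathfrak p}$.

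Your argument bypasses the height-one analysis entirely. You use the abstract isomorphism of Corollary~\ref{cor2.2}(1) only to equate Hilbert functions of source and target, then combine generic injectivity with torsion-freeness of $\mathcal A_i$ (from reflexivity, Corollary~\ref{cor2.2}(4)) to get global injectivity of $\mu_i$, and finish by counting dimensions in each graded piece. This is shorter and avoids the $\operatorname{Ext}$/local-duality machinery. The paper's route, on the other hand, is more robust: it does not depend on knowing in advance that source and target are abstractly isomorphic, and so would transplant to settings where no such a priori duality is available.
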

\begin{proof} If $i<0$ or $\delta<i$, then the assertion is trivial because ${\rm Hom}_S({\rm Sym}(I)_{\delta-i},\mathcal A _{\delta})$ and $\mathcal A_i$ both vanish due to (1) and (2) from Corollary \ref{cor2.2}. Hence, it suffices to prove the assertion for $i$ in the range $0 \le i \le \delta$. We fix such an $i$ and we denote the map induced by multiplication by $$\Phi:  \mathcal A_i \longrightarrow {\rm Hom}_S({\rm Sym}(I)_{\delta-i},\mathcal A _{\delta}).$$

Write $\Sigma$ for ${\rm Sym}(I)$. If $\p\in {\rm Spec}(S)$, then  the ring $\Sigma_{\p}=S_{\p} \otimes_S \Sigma$ is a standard graded $S_{\p}$-algebra with irrelevant ideal $(\Sigma_{\p})_+=\m\Sigma_{\p}$. Furthermore, $\Sigma_{\p}$ is a complete intersection with $\operatorname{dim} \Sigma_{\p}=\dim S_{\p}$. The source and the target of the homomorphism $\Phi$ are reflexive $S$-modules, see part (4) of Corollary \ref{cor2.2}. Thus, to prove that $\Phi$ is injective it suffices to show that $\Phi_{\p}=S_{\p} \otimes \Phi$ is injective when $\p$ is the zero ideal of $S$, and then to prove that $\Phi$ is surjective one only needs to check that $\Phi_{\p}$ is surjective for every $\p \in \operatorname{Spec}(S)$ with $\dim S_{\p}=1$.

First, let $\p$ be the zero ideal. In this case, $\Sigma_{\p}$ is an Artinian standard graded Gorenstein algebra over a field with homogeneous maximal ideal $\m\Sigma_{\p}$. Therefore ${\mathcal A}_{\p}= 0 :_{\Sigma_{\p}} {\mathfrak m}^{\infty}=\Sigma_{\p}$. In particular, $[\Sigma_{\p}]_{\delta} \not= 0$ and  $[\Sigma_{\p}]_{i}= 0$ for $i > \delta$  by Corollary \ref{cor2.2}, parts (1) and (2). In other words, $[\Sigma_{\p}]_{\delta}$ is the socle  of the Gorenstein algebra $\Sigma_{\p}$. Thus, multiplication induces an isomorphism $$[\Sigma_{\p}]_{i} \longrightarrow
{\rm Hom}_{S_{\p}}([\Sigma_{\p}]_{\delta-i},[\Sigma_{\p}]_{\delta}).$$ As $\Sigma_{\p}={\mathcal A}_{\p}$, we conclude that $\Phi_{\p}$ is an isomorphism.

Next, let $\p \in \operatorname{Spec}(S)$ with $\dim S_{\p}=1$. We need to show that $\Phi_{\p}$ is surjective. Let $\theta$ be any element of ${\rm Hom}_{S_{\p}}([\Sigma_{\p}]_{\delta-i},[{\mathcal A}_{\p}]_{\delta})$. We prove that the map $\theta$ is multiplication by some element of $[{\mathcal A}_{\p}]_i$. Notice that
\begin{equation*}\begin{array}{rcll}
{\rm Hom}_{S_{\p}}([\Sigma_{\p}]_{\delta-i},[{\mathcal A}_{\p}]_{\delta})&=& {\rm Hom}_{S_{\p}}((\m^{\delta-i}\Sigma_{\p})/(\m^{\delta-i+1}\Sigma_{\p}),[{\mathcal A}_{\p}]_{\delta})\\
&\subset& {\rm Hom}_{\Sigma_{\p}}((\m^{\delta-i}\Sigma_{\p})/(\m^{\delta-i+1}\Sigma_{\p}),{\mathcal A}_{\p})\\
&\subset& {\rm Hom}_{\Sigma_{\p}}((\m^{\delta-i}\Sigma_{\p})/(\m^{\delta-i+1}\Sigma_{\p}),\Sigma_{\p}),
\end{array}
\end{equation*}
 where the next-to-last inclusion holds because $\m[{\mathcal A}_{\p}]_{\delta}=0$ by Corollary \ref{cor2.2}, part (2).

We will prove that $\theta \in [{\rm Hom}_{\Sigma_{\p}}((\m^{\delta-i}\Sigma_{\p})/(\m^{\delta-i+1}\Sigma_{\p}),\Sigma_{\p})]_i$ can be lifted to a map $$\widetilde{\theta} \in \left[{\rm Hom}_{\Sigma_{\p}}(\Sigma_{\p}/(\m^{\delta-i+1}\Sigma_{\p}),\Sigma_{\p})\right]_{i}.$$ Any such $\widetilde{\theta} $ is induced by multiplication by an element $\lambda \in [\Sigma_{\p}]_i$. The element $\lambda$ is necessarily annihilated by $\m^{\delta-i+1}$. Recall that ${\mathcal A}_{\p}= 0 :_{\Sigma_{\p}} {\mathfrak m}^{\infty}$. Thus $\lambda$ lies in  $[{\mathcal A}_{\p}]_i$, and therefore $\widetilde{\theta} $ and $\theta$  both are induced by multiplication by an element $\lambda \in [{\mathcal A}_{\p}]_i$.

To show that $\theta$ can be lifted, we first apply ${\rm Hom}_{\Sigma_{\p}}(-, \Sigma_{\p})$ to the short exact sequence of graded $\Sigma_{\p}$-modules,
$$ 0 \to (\m^{\delta-i}\Sigma_{\p})/(\m^{\delta-i+1}\Sigma_{\p}) \longrightarrow \Sigma_{\p}/(\m^{\delta-i+1}\Sigma_{\p}) \longrightarrow \Sigma_{\p}/(\m^{\delta-i}\Sigma_{\p}) \to 0.
$$
The corresponding long exact sequence of cohomology induces the following exact sequence of $S_{\p}$-modules
\begin{equation*}\begin{array}{rcll} \left[{\rm Hom}_{\Sigma_{\p}}(\Sigma_{\p}/(\m^{\delta-i+1}\Sigma_{\p}),\Sigma_{\p})\right]_i &\longrightarrow& \left[{\rm Hom}_{\Sigma_{\p}}((\m^{\delta-i}\Sigma_{\p})/(\m^{\delta-i+1}\Sigma_{\p}),\Sigma_{\p})\right]_i \\
& \longrightarrow&  \left[{\rm Ext}^1_{\Sigma_{\p}}(\Sigma_{\p}/(\m^{\delta-i}\Sigma_{\p}),\Sigma_{\p})\right]_i.
\end{array}
\end{equation*}
It suffices to prove that $ [{\rm Ext}^1_{\Sigma_{\p}}(\Sigma_{\p}/(\m^{\delta-i}\Sigma_{\p}),\Sigma_{\p})]_i=0$. To do so, recall that $\Sigma_{\p}=S_{\p}[x,y]/(g_1,g_2)$, where $g_1, g_2$ is a regular sequence of forms of degrees $d_1, d_2$. Therefore, $$\omega_{\Sigma_{\p}} \simeq \Sigma_{\p}(-2+d_1+d_2)=\Sigma_{\p}(\delta).$$ It follows that $$ \left[{\rm Ext}^1_{\Sigma_{\p}}(\Sigma_{\p}/(\m^{\delta-i}\Sigma_{\p}),\Sigma_{\p})\right]_i \simeq \left[{\rm Ext}^1_{\Sigma_{\p}}(\Sigma_{\p}/(\m^{\delta-i}\Sigma_{\p}),\omega_{\Sigma_{\p}})\right]_{i-\delta}.$$
As $\dim \Sigma_{\p}=1$, local duality implies that the latter module vanishes if and only if  $$\left[{\rm H}^0_{\mathfrak M}(\Sigma_{\p}/(\m^{\delta-i}\Sigma_{\p}))\right]_{\delta-i} =0\, ,$$ where ${\mathfrak M}$ denotes the homogeneous maximal ideal of $\Sigma_{\p}$. To finish, we note that $$\left[{\rm H}^0_{\mathfrak M}(\Sigma_{\p}/(\m^{\delta-i}\Sigma_{\p}))\right]_{\delta-i} \subset \left[\Sigma_{\p}/(\m^{\delta-i}\Sigma_{\p})\right]_{\delta-i}=0\, .$$
\QED
\end{proof}

\begin{corollary}\label{link} Adopt Data {\rm \ref{data1}}.
For each integer $i$, the ${\rm Sym}(I)$-ideals $\mathcal A _{\ge i}$
and $0 :_{{\rm Sym}(I)} {\mathfrak m}^{d-1-i}$ are equal.
\end{corollary}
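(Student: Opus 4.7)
The plan is to prove the two inclusions separately, using only Corollary~\ref{cor2.2}(2) (vanishing $\mathcal A_j = 0$ for $j > \delta$) and the perfect pairing of Theorem~\ref{pp}. Throughout, $\delta = d-2$, so $d-1-i = \delta+1-i$.

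For the inclusion $\mathcal A_{\ge i} \subseteq 0 :_{\operatorname{Sym}(I)} \mathfrak m^{d-1-i}$, I would take a bi-homogeneous $\alpha \in \mathcal A_j$ with $j \ge i$ and observe that $\mathfrak m^{d-1-i}\cdot \alpha$ lies in $\mathcal A_{j+d-1-i} \subseteq \mathcal A_{\ge d-1}$. Since $d-1 > \delta$, the latter module vanishes by Corollary~\ref{cor2.2}(2). By bi-homogeneity this extends to all of $\mathcal A_{\ge i}$.

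The reverse inclusion is where the perfect pairing enters. Let $\alpha \in 0 :_{\operatorname{Sym}(I)} \mathfrak m^{d-1-i}$. Because $\alpha$ is annihilated by a power of $\mathfrak m$, $\alpha \in \mathcal A$, and we may decompose $\alpha = \sum_j \alpha_j$ with $\alpha_j \in \mathcal A_j$. Since $\mathfrak m$ is $(xy)$-homogeneous, each $\alpha_j$ satisfies $\mathfrak m^{d-1-i}\alpha_j = 0$; I must show $\alpha_j = 0$ whenever $j < i$. Fix such a $j$. By Theorem~\ref{pp}, $\alpha_j$ is determined by the $S$-linear map $\operatorname{Sym}(I)_{\delta-j} \to \mathcal A_\delta$ it induces via multiplication, so it suffices to show $\alpha_j \cdot \operatorname{Sym}(I)_{\delta-j} = 0$. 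The key observation is that, as an $S$-module, $\operatorname{Sym}(I)_{\delta-j}$ is spanned by the images of the monomials $x^a y^{\delta-j-a}$, and the arithmetic $(\delta - j) = (d-1-i) + (i-1-j)$ with both summands nonnegative (since $j \le i-1$ and $i \le d-1$, the latter being automatic from Corollary~\ref{cor2.2}(2)) shows that each such monomial factors as a monomial of $(xy)$-degree $d-1-i$ times one of $(xy)$-degree $i-1-j$. Hence $\operatorname{Sym}(I)_{\delta-j} = \mathfrak m^{d-1-i}\cdot \operatorname{Sym}(I)_{i-1-j}$ as $S$-modules, so $\alpha_j \cdot \operatorname{Sym}(I)_{\delta-j} = (\alpha_j \mathfrak m^{d-1-i})\cdot \operatorname{Sym}(I)_{i-1-j} = 0$, and the injectivity half of Theorem~\ref{pp} forces $\alpha_j = 0$.

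I do not expect any serious obstacle beyond bookkeeping; the proof is essentially a bi-degree accounting matched against the injectivity of the duality map. The only items requiring care are the two edge ranges $i \le 0$ and $i \ge d-1$: in the first case both ideals equal $\mathcal A$ (using $\mathcal A\cdot \mathfrak m^{d-1} \subseteq \mathcal A_{\ge d-1} = 0$), and in the second both are zero (again by Corollary~\ref{cor2.2}(2) for the left side, and because $\mathfrak m^{\le 0} = \operatorname{Sym}(I)$ for the right side).
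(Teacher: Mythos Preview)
Your proposal is correct and follows essentially the same argument as the paper's proof: both directions rely on the vanishing $\mathcal A_{\ge d-1}=0$ from Corollary~\ref{cor2.2}(2), and the reverse inclusion is obtained by showing that a homogeneous element $\alpha_j$ with $j<i$ annihilates all of $\operatorname{Sym}(I)_{\delta-j}$ (since $\delta-j\ge d-1-i$), contradicting the injectivity in Theorem~\ref{pp}. One minor point: the parenthetical claim that ``$i\le d-1$ is automatic from Corollary~\ref{cor2.2}(2)'' is not quite right as stated, since that corollary constrains $j$, not $i$; but this is harmless because you explicitly treat the edge ranges $i\le 0$ and $i\ge d-1$ separately at the end.
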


\begin{proof} Assume first that  $d-1\le i$. In this case,   $\mathcal A_{\ge i}=0$ by Corollary \ref{cor2.2}, part (2). On the other hand, in this case,   $d-1-i\le 0$\,; so, $\mathfrak m^{d-1-i}=R$ and $0 :_{{\rm Sym}(I)} {\mathfrak m}^{d-1-i}=0 :_{{\rm Sym}(I)} R=0$.

Now assume that $i\le 0$. In this case, $\mathcal A_{\ge i}=\mathcal A$. On the other hand, in this case, $d-1\le d-1-i$\,; thus,  $\mathcal A\,\mathfrak m^{d-1-i}\subset \mathcal A_{\ge d-1-i}\subset \mathcal A_{\ge d-1}=0$ and
$$\mathcal A \subset 0 :_{{\rm Sym}(I)} {\mathfrak m}^{d-1-i}\subset   0 :_{{\rm Sym}(I)} {\mathfrak m}^{\infty}=\mathcal A\, .$$

Finally,  assume that $1\le i\le d-2$. In this case,
  $\m^{d-1-i}\mathcal A_{\ge i} \subset A_{\ge d-1} = A_{\ge \delta +1} =0$, where the last equality holds by part (2) of Corollary \ref{cor2.2}. It follows that $\mathcal A_{\ge i} \subset 0 :_{{\rm Sym}(I)} {\mathfrak m}^{d-1-i}$. To see the other inclusion, let $\lambda \neq 0$ be a homogeneous element in  $ 0 :_{{\rm Sym}(I)} {\mathfrak m}^{d-1-i}$. Clearly $\lambda \in  0 :_{{\rm Sym}(I)} {\mathfrak m}^{\infty} =\mathcal A$. We prove that $\lambda$ has degree at least $i$. Suppose otherwise. In this case, we observe that
$$\lambda [{\rm Sym}(I)]_{d-2-\deg \lambda} \subset \lambda \m^{d-2-\deg \lambda} {\rm Sym}(I) \subset \lambda \m^{d-1-i} {\rm Sym}(I)=0\, ,$$ where the last inclusion holds because $d-2-\deg \lambda \ge d-1-i$.
This shows that $\lambda$ is a non-zero homogeneous element with the property that
  multiplication by $\lambda  $ induces the zero homomorphism from $[{\rm Sym}(I)]_{d-2-\deg \lambda}$ to $\mathcal A_{d-2}$. This contradicts the injectivity of the isomorphism established in Theorem \ref{pp}.
\QED
\end{proof}

\bigskip
\begin{center}
{\sc \ref{DPC}.D \quad Explicit $S$-module generators for $\mathcal A_i$, when $i$ is large.}
\end{center}

When $i$ is chosen so that $g_1$ and $g_2$ lie in $\mathfrak m^{d-1-i}B$, then the equality of Corollary~\ref{link} has an interpretation in terms of linkage.  In the present subsection we exploit that interpretation in order to exhibit explicit generators. Hong, Simis, and Vasconcelos \cite[Sect.~3]{HSV12} have used linkage in a similar manner. 

\begin{definition}\label{matrices} For each positive index $\ell$,  define  $\Lambda_\ell$ to be the
  $(\ell+1) \times \ell$  matrix
$$\Lambda_\ell=\left[\begin{matrix}
-x&0 &0&\cdots&0&0\\
y&-x&0&\cdots&0&0\\
0& y&.&\cdots&0&0\\
0&0&.&.&0&0\\
\vdots&\vdots&\vdots&.&.&\vdots\\
0&0&0&\cdots&.&-x\\
0&0&0&\cdots&0&y\end{matrix}\right].$$
Notice that  $\Lambda_\ell$ is a Hilbert-Burch matrix for the row vector $\left [\begin{matrix}y^\ell& xy^{\ell-1}&\cdots&x^\ell\end{matrix}\right]$. \end{definition}

\bigskip
If $\ell$ is an index with $1\le \ell\le d_1$, then the polynomials $g_1,g_2$ of Data \ref{data1} are both in $(x,y)^{\ell}B$. Let $\Xi_\ell$ be an $(\ell+1) \times 2$ matrix of bi-homogeneous elements of $B$, of bi-degree $(d_1-\ell,1)$ in column $1$ and bi-degree $(d_2-\ell,1)$ in column $2$, with
\begin{equation}\label{Xi}\left [\begin{matrix}g_1&g_2\end{matrix}\right] =\left [\begin{matrix}y^\ell& xy^{\ell-1}&\cdots&x^\ell\end{matrix}\right]
\Xi_\ell\end{equation} and
 $\Psi_\ell$  be the $(\ell+1) \times (\ell+2)$ matrix
\begin{equation}\label{Psi}\Psi_\ell=\left [\begin{matrix} \Lambda_\ell &\Xi_\ell\end{matrix}\right]
\end{equation}of bi-homogeneous elements of $B$.
\bigskip

\begin{corollary} \label{XXX} Adopt Data {\rm \ref{data1}}. Let $i$ be an index with
$d_2-1 \le i \le \delta$ and $\Psi_{d-1-i}$ be a matrix as described in {\rm(\ref{Psi})}.
\begin{itemize}
\item[$($1$)$] The ideals
$$\mathcal A_{\ge i}, \quad  \mathcal A_{(i,2)}  {\rm Sym}(I),\quad\text{and}\quad {\rm I}_{d-i}(\Psi_{d-1-i}) {\rm Sym}(I)
$$of   ${\rm Sym}(I)$ are equal.
In particular, the minimal bi-homogeneous generators of the ${\rm Sym}(I)$-ideal $\mathcal A$ all have $\{x,y\}$-degree at most $d_2-1$.
\item[$($2$)$] The $S$-module $\mathcal A_i$ is free of rank $d-i-1${\rm;} a basis consists of the maximal minors of the matrix $\Psi_{d-1-i}$ that involve the last two columns.
\end{itemize}
\end{corollary}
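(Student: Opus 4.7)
The plan is to reduce the assertion to a computation of the colon ideal $(g_1,g_2) :_B \mathfrak{m}^\ell B$. Set $\ell = d-1-i$; the hypothesis $d_2-1 \le i \le \delta$ is equivalent to $1 \le \ell \le d_1$, so the matrix $\Psi_\ell = [\Lambda_\ell \mid \Xi_\ell]$ of (\ref{Psi}) is defined. Corollary~\ref{link} identifies $\mathcal{A}_{\ge i}$ with $0 :_{\Sym(I)} \mathfrak{m}^\ell$, equivalently with $\left((g_1,g_2) :_B \mathfrak{m}^\ell B\right)/(g_1,g_2)$.

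The main technical step is to invoke Hilbert--Burch linkage in $B$. The $B$-ideal $\mathfrak{m}^\ell B$ is perfect of grade $2$ with Hilbert--Burch matrix $\Lambda_\ell$, and $(g_1,g_2)$ is a height-$2$ complete intersection contained in $\mathfrak{m}^\ell B$ since $\ell \le d_1 \le d_m$. The classical linkage theorem then asserts that $(g_1,g_2):_B \mathfrak{m}^\ell B$ is generated by the $\ell+2$ signed maximal minors of $\Psi_\ell$. The forward inclusion $I_{\ell+1}(\Psi_\ell) \subseteq (g_1,g_2):_B \mathfrak{m}^\ell B$ can be checked directly using the identity $v^{\rm T}\Psi_\ell = [0,\ldots,0,g_1,g_2]$ (where $v = [y^\ell,\ldots,x^\ell]^{\rm T}$; the vanishing in the first $\ell$ slots is the Hilbert--Burch relation for $\Lambda_\ell$ and the remaining entries come from (\ref{Xi})) together with the elementary row-replacement identity $\det(\Phi\text{ with row } k \text{ replaced by } v^{\rm T}\Phi) = v_k \det \Phi$ for any $(\ell+1)\times(\ell+1)$ submatrix $\Phi$ of $\Psi_\ell$. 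The reverse inclusion is where one really needs the linkage theorem, and this is the principal obstacle.

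To finish (1), I would classify the $\ell+2$ maximal minors of $\Psi_\ell$. Two of them use all $\ell$ columns of $\Lambda_\ell$ together with one column of $\Xi_\ell$; Laplace expansion along that $\Xi_\ell$-column, combined with the observation that the signed maximal minors of $\Lambda_\ell$ are precisely $y^\ell,xy^{\ell-1},\ldots,x^\ell$, shows that these two minors equal $\pm g_1$ and $\pm g_2$ and thus vanish in $\Sym(I)$. The remaining $\ell$ minors use both columns of $\Xi_\ell$ together with $\ell-1$ columns of $\Lambda_\ell$, so each has bi-degree $((d_1-\ell)+(d_2-\ell)+(\ell-1),\,2) = (i,2)$. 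Consequently these $\ell$ elements generate $\mathcal{A}_{\ge i}$ as a $\Sym(I)$-ideal and lie in $\mathcal{A}_{(i,2)}$, giving the three-fold equality in (1); the ``in particular'' bound follows from the special case $i = d_2-1$.

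For (2), Theorem~\ref{chart}(3) states that $\mathcal{A}_i \simeq S(-2)^{\delta-i+1} = S(-2)^\ell$, so $\mathcal{A}_i$ is graded $S$-free of rank $\ell$ with basis in $T$-degree $2$. The identity $\mathcal{A}_{\ge i} = \mathcal{A}_{(i,2)}\Sym(I)$ from (1), restricted to $\{x,y\}$-degree exactly $i$, reads $\mathcal{A}_i = \mathcal{A}_{(i,2)}\cdot S$; hence the $\ell$ minors $S$-generate $\mathcal{A}_i$, and any $\ell$ homogeneous generators of a graded $S$-free module of rank $\ell$ must form a basis.
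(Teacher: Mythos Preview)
Your proof is correct and follows essentially the same route as the paper: reduce via Corollary~\ref{link} to computing $(g_1,g_2):_B \mathfrak m^{\ell}B$, apply Hilbert--Burch linkage (the paper cites Gaeta \cite{Gaeta}) to identify this colon as $I_{\ell+1}(\Psi_\ell)$, separate the minors into $\pm g_1,\pm g_2$ and the $\ell$ bi-degree $(i,2)$ minors involving both columns of $\Xi_\ell$, and then invoke Theorem~\ref{chart}(3) for the freeness in part~(2). Your additional remarks on the easy inclusion via the row-replacement identity and the explicit degree count are helpful elaborations but do not deviate from the paper's argument.
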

\begin{note} The bound on the generator degrees that is given in item (1)  is also given in \cite[Thm.~4.6]{BD}.\end{note}

\begin{proof}
Recall that $\mathcal A_{\ge i} =0 :_{{\rm Sym}(I)} {\mathfrak m}^{d-1-i}$ by Corollary \ref{link}.
The latter ideal equals $$\frac{(g_1, g_2)B :_B \m^{d-1-i}}{(g_1, g_2)B}.$$
The homogeneous $B$-ideal $\m^{d-1-i}B$ is perfect of grade 2 and it contains the homogeneous $B$-regular sequence $g_1, g_2$
because the hypothesis $d_2-1 \le i \le \delta$ guarantees that \begin{equation}\label{bigdeal}1\le d-1-i \le d_1.\end{equation} 
 The matrix $\Lambda_{d-1-i}$ of Definition \ref{matrices} is a homogeneous matrix of relations among the generators
$y^{d-1-i}, xy^{d-2-i}, \cdots, x^{d-1-i}$ of $\m^{d-1-i}B$, and $\Xi_{d-1-i}$ is a homogeneous matrix of coefficients when writing $g_1, g_2$ in terms of these generators. The inequalities of (\ref{bigdeal})  guarantee that  the matrix $\Xi_{d-1-i}$ of (\ref{Xi}) is defined.  Now, according to \cite{Gaeta}, the linked ideal $(g_1, g_2)B :_B \m^{d-1-i}$ is generated by the maximal minors of the $d-i$ by $d-i+1$ matrix $\Psi_{d-1-i}=\left [\begin{matrix} \Lambda_{d-1-i} &\Xi_{d-1-i}\end{matrix}\right]$. Thus indeed,
$$\frac{(g_1, g_2)B :_B \m^{d-1-i}}{(g_1, g_2)B}= \frac{{\rm I}_{d-i}(\Psi_{d-1-i}) B}{(g_1, g_2)B}={\rm I}_{d-i}(\Psi_{d-1-i}) {\rm Sym}(I).$$
The maximal minors of $\Psi_{d-1-i}$ are $g_1, g_2$ (up to sign),
together with the minors $\Delta_1, \ldots, \Delta_{d-1-i}$ that involve the last two columns. We conclude that
$$ \mathcal A_{\ge i} = (\Delta_1, \ldots, \Delta_{d-1-i}){\rm Sym}(I)$$ Observe that each $\Delta_i$ has  bi-degree $(i,2)$. This completes the proof of (1).

Assertion (2) follows from Theorem \ref{chart}, part (3), and the fact that the elements
 $\Delta_1, \ldots, \Delta_{d-1-i}$ generate $\mathcal A_i$ as an $S$-module.
\QED
\end{proof}

\bigskip
\begin{remark}\label{???} Corollary \ref{XXX} describes $\mathcal A_{i_0}$ for each $i_0$ with $d_2-1\le i_0\le \delta$. We highlight the content of Corollary \ref{XXX} at the boundaries  $i_0=d_2-1$ and $i_0=\delta$.

Take $i_0=\delta$. In this situation, the $S$-module $\mathcal A_{\delta}$ is free of rank $1$ with basis element any Sylvester form ``${\rm syl}$'', where ${\rm syl}$ is the image in ${\rm Sym}(I)$ of the determinant of any fixed $2\times 2$ matrix $\Xi_1$ described in   (\ref{Xi}). (The $k$-vector space $\mathcal A_{(\delta,2)}$ is one-dimensional and we use the name ``${\rm syl}$'' for any basis element of this vector space.) Notice that the entries of column $m$ of $\Xi_1$ are homogeneous of bi-degree $(d_m-1,1)$ and $\syl$ is homogeneous of bi-degree $(\delta,2)$. Let $i$ be arbitrary. One explicit realization of the isomorphism
$$\mathcal A_i  \simeq {\rm Hom}_S({\rm Sym}(I)_{\delta-i},S(-2))$$ of Corollary \ref{cor2.2} is that this isomorphism is induced by the composition
\begin{equation}\label{sigma}\xymatrix{\mathcal A_i\otimes {\rm Sym}(I)_{\delta-i}\ar[rr]^{\phantom{XXXX}\rm mult}&& \mathcal A_{\delta}}\stackrel{\sigma}{\longrightarrow} S(-2),\end{equation}where
${\rm mult}$ is the multiplication in ${\rm Sym}(I)$, as described in Theorem \ref{pp}, and $\sigma$ is the inverse of the isomorphism $S(-2)\to \mathcal A_{\delta}$ which sends $1$ to $\syl$.

Take  $i_0=d_2-1$. Corollary \ref{XXX} guarantees that $\mathcal A_{\ge d_2-1}$ is generated as a $B$-module by the maximal minors of $\Psi_{d_1}=[\Lambda_{d_1},\ \Xi_{d_1}]$ which involve both columns of $\Xi_{d_1}$. Each of these minors is homogeneous of bi-degree $(d_2-1,2)$, and one choice for
$\Xi_{d_1}$ is $$\left[\begin{array}{ll}c_{0,1}\quad&c_{0,2}y^{d_2-d_1}+\dots+c_{d_2-d_1,2}x^{d_2-d_1}\\c_{1,1}&c_{d_2-d_1+1,2}x^{d_2-d_1}\\\vdots&\vdots\\c_{d_1,1}&c_{d_2,2}x^{d_2-d_1} \end{array}\right],$$

\noindent for $c_{\ell,m}$ as described in (\ref{g}).
\end{remark}
\bigskip


\medskip

\section{The case of a generalized zero in the first column of $\varphi$.}\label{3}

\begin{data}\label{data3} Adopt Data {\rm \ref{data1}} with $d_1<d_2$. Assume that
   there is a generalized zero in the first column of $\varphi$.
\end{data}

In this section we are in the situation of Data \ref{data3} and  we
describe $\mathcal A_{\ge d_1-1}$ as an $S$-module and as a
$B$-module. The hypothesis that there is a generalized zero in the
first column of $\varphi$ has geometric significance. Indeed, as described in Remark \ref{curve}, the
homogeneous minimal generators of $I$ describe a morphism
$\eta:\mathbb P^1_k\to \mathbb P^2_k$ whose image is a rational curve
$\mathcal C$. If the morphism $\eta$ is birational onto $\mathcal C$
(or equivalently, if the curve  $\mathcal C$ has degree $d$), then
the assumption that $\varphi$ has a generalized zero in the first
column is equivalent to the assumption that  $\mathcal C$ has a
singularity of multiplicity $d_2$; see the General Lemma in
\cite[1.9]{CKPU}; or \cite[Thm.~3]{SCG} or \cite[Thm.~1]{CWL}.

The $S$-module structure of $\mathcal A_{\ge d_1-1}$ is completely described in Theorem \ref{claudia}: $\mathcal A_{\ge d_1-1}$ is a free $S$-module of finite rank and a complete list of the  bi-degrees of an $S$-module basis for  $\mathcal A_{\ge d_1-1}$ is given; see also Table \ref{tbl1}. The $B$-module structure of $\mathcal A_{\ge d_1-1}$ is described in Corollary \ref{B-mod}

The part of $\Sym(I)$ that corresponds to $\mathcal A_{\ge d_1-1}$, under the duality of Theorem \ref{iso}, is $\Sym(I)_{\le d_2-1}$. There is no contribution from $g_2$ to the $S$-module $\Sym(I)_{\le d_2-1}$ in the bi-homogeneous  $B$-resolution of $\Sym(I)$. So, basically, we may ignore $g_2$ in the present section. Furthermore, the hypothesis that the first column of $\varphi$ has a generalized zero allows us to make the critical calculation over a   subring $U$ of $S$, where $U$ is a polynomial ring in two variables. In the proof of Theorem \ref{claudia}, we decompose various  bi-graded
complexes over $R\otimes_kU$ into their $R$-graded components and their $U$-graded components. Ultimately, the critical calculation is to produce a lower bound for the degrees of the syzygies of a $U$-module homomorphism.

Lemma \ref{Hilseries}  is a statement about $U$-module homomorphisms. This lemma explains how, sometimes, 
lower bounds for the degrees of syzygies suffice to determine these degrees.
The statement of Lemma \ref{Hilseries} may be deduced from a
 classification of   matrices   whose entries are linear forms from $U$. This classification was known by  Weierstrass (in the singular case) and Kronecker (in the general case); see \cite[Chapt XII]{Gant}. The proof we give for  Lemma \ref{Hilseries} uses Hilbert series to relate the twists in a homogeneous resolution  to the betti numbers. This  technique, which is now standard, was introduced by Peskine and Szpiro \cite{PS} and was used with great success by Herzog and K\"uhl \cite{HK};  it was also a motivation for   Boij-S\"oderberg  theory.

\begin{lemma}\label{Hilseries} Let $M$ be a  graded module of finite length with a linear presentation over the polynomial ring   $U=k[T_1,T_2]$.
If a homogeneous resolution of $M$ has the form
$$0\lto \bigoplus_{\ell=1}^m U(-b_\ell)\lto U(-1)^n\lto U^{n-m}\lto M\lto 0\, ,$$ then $\sum_{\ell=1}^m b_{\ell}=n$.
\end{lemma}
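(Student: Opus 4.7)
The plan is the standard Hilbert-series bookkeeping argument alluded to in the paragraph preceding the lemma. Since $U=k[T_1,T_2]$ has Hilbert series $1/(1-t)^2$, each shifted free module $U(-b)$ contributes $t^b/(1-t)^2$. Writing the resolution as a complex of graded free modules whose alternating sum of Hilbert series gives $H_M(t)$, I would obtain
$$H_M(t) \;=\; \frac{(n-m)\,-\,n\,t\,+\,\sum_{\ell=1}^m t^{b_\ell}}{(1-t)^2}.$$
Call the numerator $p(t)$; this is a polynomial in $\mathbb{Z}[t]$ of degree $\max_\ell b_\ell$.

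Next I would use the finite length hypothesis on $M$. Finite length forces $H_M(t)$ to be a polynomial, so $(1-t)^2$ must divide $p(t)$. Hence both $p(1)=0$ and $p'(1)=0$. The first identity gives $(n-m)-n+m=0$, which is automatic and so carries no information. The second identity is the one that yields the conclusion: differentiating gives
$$p'(t) \;=\; -n\,+\,\sum_{\ell=1}^m b_\ell\,t^{b_\ell-1},$$
and evaluating at $t=1$ produces $-n+\sum_{\ell=1}^m b_\ell=0$, i.e.\ $\sum_{\ell=1}^m b_\ell=n$, as required.

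There is essentially no obstacle: the argument is a one-line Hilbert series computation once one notices that finite length of $M$ means the denominator $(1-t)^2$ must cancel against the numerator to twice the required order. The only thing worth checking carefully is that the complex in the hypothesis is indeed exact (so that the alternating sum of Hilbert series really does equal $H_M(t)$), and that the \emph{linearity} of the presentation is used implicitly in labeling the middle twist as $U(-1)^n$; these are part of the hypothesis. If desired, one can remark that the same argument shows more generally that, for any finite length graded $U$-module with a minimal free resolution, the first and second ``elementary symmetric functions'' of the twists in each homological degree satisfy two linear relations coming from $p(1)=p'(1)=0$, in the spirit of Herzog--K\"uhl.
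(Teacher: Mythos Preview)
Your proof is correct and follows essentially the same route as the paper: compute the Hilbert series numerator $p(t)=(n-m)-nt+\sum_\ell t^{b_\ell}$, use finite length to conclude $(1-t)^2\mid p(t)$, and read off the conclusion from $p'(1)=0$. The paper's version is terser but identical in substance.
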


\begin{proof} The Hilbert series of $M$  is $h_M(t)/(1-t)^2$, where $h_M(t)=n-m-nt+\sum_{\ell=1}^mt^{b_{\ell}}$. Since $M$ has finite length, this series is a polynomial; hence, $(1-t)^2$ divides $h_M(t)$. Therefore, $h'_M(1)=0$, which gives the assertion. \QED \end{proof}

\begin{theorem}\label{claudia} Adopt Data {\rm\ref{data3}}. If $d_1-1\le i\le d_2-1$, then $$\mathcal A_i\simeq \bigoplus\limits_{\ell=1}^{d_1}S(-a_{\ell}),$$ where
$$\left\lfloor \frac{d+d_1-1-i}{d_1} \right\rfloor=a_1\le \dots \le a_{d_1}= \left\lceil \frac{d+d_1-1-i}{d_1} \right\rceil$$ and $$ \sum\limits_{\ell=1}^{d_1}a_{\ell}=d+d_1-1-i\, .$$
\end{theorem}

\begin{remark}\label{alt}We offer an alternate phrasing for Theorem \ref{claudia}. If $d_1-1\le i\le d_2-1$ and ${\alpha_i}$ and ${\beta_i}$ are integers with
$$d+d_1-1-i={{{\alpha_i}}} d_1+{\beta_i}\quad\text{and}\quad  0\le {\beta_i}\le d_1-1,$$ then 
 $$\mathcal A_i\simeq S(-{{\alpha_i}})^{d_1-{\beta_i}}\oplus S(-\alpha_i-1)^{{\beta_i}}.$$ 
Of course, in this language, ${{\alpha_i}}$ is equal to $\left\lfloor \frac{d+d_1-1-i}{d_1} \right\rfloor$ and ${\beta_i}$ is the ``remainder that is obtained when $d+d_1-1-i$ is divided by $d_1$''. Observe that the parameter $\alpha_i$ is always at least $2$,  the exponent $d_1-{\beta_i}$ is positive, and the other exponent, ${\beta_i}$, is non-negative.  
\end{remark}

\begin{proof1} The case $i=d_2-1$ is covered in part (3) of Corollary \ref{chart}. Fix an integer $i$ with $$d_1-1\le i\le d_2-2\, .$$
We prove the following four ingredients.
\begin{equation}\label{free}\text{The $S$-module $\mathcal A_i$ is free of rank $d_1$.}\end{equation}
Once (\ref{free}) is established, then we define the shifts $a_{\ell}$ by $\mathcal A_i\simeq \bigoplus_{\ell=1}^{d_1}S(-a_{\ell})$ and we prove
\begin{equation}\label{sum}\sum\limits_{\ell=1}^{d_1}a_{\ell}=d+d_1-1-i, \end{equation} 
\begin{equation}\label{guts}\mathcal A_{(i,j)}=0\quad\text{for $j\le \alpha_i-1$},\quad\text{and}\end
{equation}
\begin{equation}\label{four}\dim_k \mathcal A_{(i,{{\alpha_i}})}=d_1-{\beta_i},\end{equation} where we have used the   language of Remark \ref{alt}.
Once the four ingredients have been established, then it is not difficult to complete the proof. If $\beta_i=0$, then the result follows immediately from (\ref{free}) and (\ref{four}); and if $0<\beta_i$, then one may apply the pigeon hole principle.    
Indeed, if (\ref{free})~--~(\ref{four}) hold and $0< \beta_i$, then
$${\alpha_i}+1\le a_{d_1-{\beta_i}+1}\le \dots\le a_{d_1}$$ and 
$$\begin{array}{rl}({{\alpha_i}}+1){\beta_i}\le a_{d_1-{\beta_i}+1}+\dots+a_{d_1}&\hskip-10pt{}=\sum\limits_{\ell=1}^{d_1} a_\ell -(d_1-{\beta_i}){{\alpha_i}}=(d+d_1-1-i)-(d_1-{\beta_i}){\alpha_i}\\&\hskip-10pt{}=({\alpha_i} d_1+{\beta_i})-(d_1-{\beta_i}){\alpha_i}=({\alpha_i}+1){\beta_i}\, ;\end{array}$$
hence, $a_{\ell} ={\alpha_i}+1$ for
$d_1-{\beta_i}+1\le  \ell \le d_1$.

The assertion about the rank of $\mathcal A_i$ is established in part (2) of   Theorem \ref{chart}.

The hypothesis that the first column of $\varphi$ has a generalized zero ensures that after performing row operations on $\varphi$  and renaming the generators of $S_1$, we have that $\varphi$ has the form $$\varphi=\left(\begin{matrix} f_1&*\\f_2&*\\0&*\end{matrix}\right)$$ and $g_1$, which is equal to $[T_1,T_2,T_3]$ times the first column of $\varphi$, only involves $T_1$ and $T_2$. Indeed,    $g_1=f_1T_1+f_2T_2$.
The hypothesis from Data \ref{data1} that $I$ has height two guarantees that $f_1$ and $f_2$ are a regular sequence of forms of degree $d_1$  in $R$.
At this point, we introduce the subrings $U=k[T_1,T_2]$ of $S$ and $C=R\otimes_kU$ of $R\otimes_k S=B$. Notice that $C[T_3]=B$ and $g_1$ is in $C$.

Let $K_{\bullet}$ be the following bi-graded complex of $C[T_3]$-modules:
\begin{equation}\label{Kdot}K_{\bullet}: \quad C[T_3](-d_1,-1) \stackrel{g_1}{\longrightarrow} C[T_3] \longrightarrow {\rm Sym}(I)\to 0.\end{equation}
The graded strand
 $[K_{\bullet}]_{(\delta-i,\underline{\phantom{x}})}$ of $K_{\bullet}$ is exact because $\delta-i\le d_2-1$ by our assumption $d_1-1\le i$.
Taking graded $S$-duals  we obtain the complex
$$\underline{\rm Hom}_S(K_{\bullet},S):\quad 0\to
\underline{\rm Hom}_S( {\rm Sym}(I),S) \longrightarrow
 \underline{\rm Hom}_S(C[T_3],S)\stackrel{g_1}{\longrightarrow}
\underline{\rm Hom}_S(C[T_3],S)(d_1,1).$$Furthermore, the graded strand
$\underline{\rm Hom}_S(K_{\bullet},S)_{(i-\delta,\underline{\phantom{x}})}$ is exact. Theorem \ref{iso} guarantees that \begin{equation}\label{star}\underline{\rm Hom}_S( {\rm Sym}(I),S)\simeq \mathcal A(\delta,2).\end{equation}
Observe that
\begin{equation}\label{2star}\underline{\rm Hom}_S(C[T_3],S)\simeq \underline{\rm Hom}_S(R\otimes_kS,S)=\underline{\rm Hom}_k(R,k)\otimes_kS \simeq \underline{\rm Hom}_k(R,k)\otimes_kU \otimes _k k[T_3].\end{equation} Combine (\ref{star}) and (\ref{2star}) to see that  the complex   $\underline{\rm Hom}_S(K_{\bullet},S)$ may be identified with
$$0\to \mathcal A(\delta,2) \to  \underline{\rm Hom}_k(R,k)\otimes_kU \otimes _k k[T_3]
\stackrel{g_1}{\longrightarrow}\underline{\rm Hom}_k(R,k)(d_1)\otimes_kU(1) \otimes _k k[T_3].$$ Since $g_1\in R\otimes_k U$, multiplication by $g_1$ gives a bi-homogeneous $(R\otimes_kU)$- module homomorphism $$\psi: \underline{\rm Hom}_k(R,k)\otimes_kU  
\stackrel{g_1}{\longrightarrow}
\underline{\rm Hom}_k(R,k)(d_1)\otimes_kU(1).$$
We focus on the kernel and cokernel of various $R$-graded and $U$-graded components of $\psi$.
We have
\begin{equation}\label{wehave}(\operatorname{ker} \psi)_{i-\delta}(-2)[T_3] \simeq \mathcal A_{i},\quad\text{for $d_1-1\le i\le d_2-2$}. \end{equation} It follows immediately that $\mathcal A_i$ is free over $S$ because $\ker \psi_{i-\delta}$  is a second syzygy over $U$, a polynomial ring in $2$ variables.
Thus, (\ref{free}) is established. Notice that $(\operatorname{ker} \psi)_{i-\delta}\simeq \bigoplus_{\ell=1}^{d_1}U(-a_\ell+2)$. The left-most map in the complex $K_{\bullet}$ of (\ref{Kdot}) is injective; consequently, a calculation similar to the one that produced (\ref{wehave}) yields
 $$(\operatorname{coker} \psi)_{i-\delta}[T_3]\simeq \operatorname{Ext}^1_S({\rm Sym}(I)_{\delta-i},S)\quad\text{for $d_1-1\le i\le d_2-2$}.$$ According to part (1.a) of Theorem \ref{future}, $\operatorname{Ext}^1_S({\rm Sym}(I)_{\delta-i},S)$ vanishes locally in codimension one whenever $d_1-1\le i\le d_2-2$; hence,  $(\operatorname{coker} \psi)_{i-\delta}$ has finite length as a $U$-module.
Now we can apply Lemma \ref{Hilseries} to $(\operatorname{coker} \psi)_{i-\delta}(-1)$, which has a homogeneous free $U$-resolution of the form:
$$\hskip-2pt 0\to (\ker \psi)_{i-\delta}(-1)\to \underline{\rm Hom}_k(R,k)_{i-\delta}\otimes_kU(-1)
 \to \underline{\rm Hom}_k(R,k)_{i-\delta+d_1}\otimes_k U \to (\operatorname{coker} \psi)_{i-\delta}(-1)\to 0.$$
As $(\operatorname{ker} \psi)_{i-\delta}(-1)\simeq \bigoplus_{\ell=1}^{d_1}U(-a_\ell+1)$ and
$$\dim_k \underline{\rm Hom}_k(R,k)_{i-\delta}= \dim_kR_{\delta-i}=\delta+1-i\, ,$$  Lemma \ref{Hilseries} gives that $\sum_{\ell=1}^{d_1}(a_{\ell}-1)=\delta+1-i$; or equivalently, $\sum_{\ell=1}^{d_1}a_{\ell}=d+d_1-1-i$. The second ingredient, (\ref{sum}), has been established.

We re-phrase (\ref{wehave}) as
\begin{equation}\label{Aij} \mathcal A_{(i,j)}\simeq (\operatorname{ker} \psi_{(i-\delta,j-2)})[T_3],\quad\text{for $d_1-1\le i\le d_2-2$ and all $j$}, \end{equation} in order to focus on the  individual components $\mathcal A_{(i,j)}$ of the free $S$-module  $\mathcal A_i$. Each such component is a finite dimensional $k$-vector space.
We see that

\begin{tabular}{lcllrclcccc}
Claim  (\ref{guts}) holds  &$\iff$&$\mathcal A_{(i,j)}=0$&\quad for& $j$&$\le$&$\left\lfloor  \frac{d-1-i}{d_1}\right\rfloor$\\\\
  &$\iff$&$(\operatorname{ker} \psi)_{(i-\delta,j-2)}=0$&\quad for& $j-2$&$\le$&$\left\lfloor \frac{d-1-i}{d_1}\right\rfloor-2 $
\\\\
&$\iff$&$(\operatorname{ker} \psi)_{(m,n)}=0$&\quad for& $n$&$\le$&$\left\lfloor \frac{1-m}{d_1}\right\rfloor-2\, $.
\end{tabular}

\noindent To prove the most recent version of (\ref{guts}) we fix an integer $n$ and consider the homogeneous $R$-module homomorphism
$$\psi_{(\underline{\phantom{xx}},n)}: \underline{\rm Hom}_k(R,k)\otimes_k U_n\longrightarrow
\underline{\rm Hom}_k(R,k)(d_1)\otimes_k U_{n+1}.$$ Recall that $\psi$ is multiplication by $f_1T_1+f_2T_2$. We choose the $k$-bases $T_1^n,\dots,T_2^n$ and $T_1^{n+1},\dots,T_2^{n+1}$ for $U_n$ and $U_{n+1}$, respectively and we see that the $R$-module homomorphism $\psi_{(\underline{\phantom{xx}},n)}$ is represented by the $(n+2)\times (n+1)$ matrix
$$\left[\begin{matrix}
f_1&0 &0&\cdots&0\\
f_2&f_1&0&\cdots&0\\
0& f_2&f_1&\cdots&0\\
\vdots&\vdots&\ddots&\ddots&\vdots\\
0&\cdots&0&f_2&f_1\\
0&\cdots&0&0&f_2\end{matrix}\right].$$
Take the graded $k$-dual of $\psi_{(\underline{\phantom{xx}},n)}$ to obtain a homogeneous $R$-module homomorphism $${_n}\chi: R(-d_1)\otimes_k U_{n+1}^*\longrightarrow R\otimes _kU_n^*,$$ 
given by the $(n+1)\times (n+2)$ matrix
$$\left[\begin{matrix} f_1&f_2&0&\cdots&0\\
0&f_1&f_2&\cdots&0\\
\vdots&\vdots&\ddots&\ddots&\vdots\\
0&0&\cdots&f_1&f_2\end{matrix}\right],$$ with entries in $R$, where  $(\underline{\phantom{x}})^*$ means $\operatorname{Hom}_k(\underline{\phantom{x}},k)$. (The name ${_n}\chi$ emphasizes that this map depends on $n$; however, this map is not created as a graded strand of some other, previously named,  object.)
Notice that $\psi_{(m,n)}$ is injective  if and only if the graded component ${_n\chi}_{-m}$ of ${_n\chi}$ is surjective;
moreover, in general, \begin{equation}\label{ker=coker}\dim_k \operatorname{ker}\psi_{(m,n)}=\dim_k \operatorname{coker} {_n\chi}_{-m}.\end{equation}
 The Buchsbaum-Rim complex yields a homogeneous free resolution  of $\operatorname{coker} {_n\chi}$:
\begin{equation}\label{Bu-Ri}0\to R(-(n+2)d_1)\to R(-d_1)\otimes_kU_{n+1}^* \stackrel{{_n\chi}}{\longrightarrow} R\otimes_kU_n^*.\end{equation} This resolution shows that the socle of $\operatorname{coker} {_n\chi}$ is concentrated in degree $(n+2)d_1-2$. Thus, ${_n\chi}_{-m}$ is surjective if $-m\ge (n+2)d_1-1$. However $ -m\ge (n+2)d_1 -1 \iff n\le \frac{1-m}{d_1}-2$, and therefore Claim (\ref{guts}) holds.

We now compute $\dim_k\mathcal A_{(i,{\alpha_i})}$, as required by Claim (\ref{four}).  The generators of the free $S$-module $\mathcal A_i$ all have degree at least ${\alpha_i}$.
Therefore, the number of minimal generators of degree $\alpha_i$ for the $S$-module $\mathcal A_i$ is the dimension of the $k$-vector space $\mathcal A_{(i,\alpha_i)}$, and, by  
 (\ref{Aij}), the vector spaces $\mathcal A_{(i,{\alpha_i})}$ and 
$\operatorname{ker} \psi_{(i-\delta,{\alpha_i}-2)}$ have the same dimension. In this calculation, we dig more deeply into   the     sequence of ideas that was used in the proof of (\ref{guts}). In particular, (\ref{Aij}) and (\ref{ker=coker}) give
\begin{equation}\label{E-1}\dim_k\mathcal A_{(i,{\alpha_i})}=\dim_k\operatorname{ker} \psi_{(i-\delta,{\alpha_i}-2)}=\dim_k(\operatorname{coker}(_{{\alpha_i}-2}\chi_{\delta-i})).\end{equation}
We again use (\ref{Bu-Ri}) to make this computation: 
\begin{equation}\label{E0}\begin{array}{rl}\dim_k(\operatorname{coker}(_{{\alpha_i}-2}\chi_{\delta-i}))&\hskip-10pt{}=\dim_k (R\otimes_kU_{{\alpha_i}-2}^*)_{ \delta-i }-\dim_k(R(-d_1)\otimes_kU^*_{{\alpha_i}-1})_{ \delta-i }\\&\hskip-10pt\phantom{{}={}}{}+\dim_k R(-{\alpha_i} d_1)_{ \delta-i}\, .\end{array}\end{equation}
  We notice that 
\begin{equation}\label{E1}(R(-d_1)\otimes_kU^*_{{\alpha_i}-1})_{\delta-i}\ne 0 \quad\text{and}\end{equation}\begin{equation}\label{E2}  R(-{\alpha_i} d_1)_{\delta-i}=0\, ,\end{equation} for $d_1-1\le i\le d_2-2$. Indeed, the hypothesis $i\le d_2-2$ guarantees that $0\le d_2-2-i=\delta-d_1-i$\,; hence,  $d_1\le \delta-i$ and (\ref{E1}) holds. Also, $a-(b-1)\le \lfloor \frac ab\rfloor b$ for all positive  integers $a$ and $b$; hence,  
$$\delta-i<d-i=(d+d_1-1-i)-(d_1-1)\le \left\lfloor\frac{d+d_1-1-i}{d_1}\right\rfloor d_1={\alpha_i} d_1$$
and (\ref{E2}) holds.
Combine (\ref{E-1}), (\ref{E0}), (\ref{E1}), and (\ref{E2}) to see that  $$\begin{array}{rcl}\dim_k\mathcal A_{(i,{\alpha_i})}=\dim_k(\operatorname{coker}(_{{\alpha_i}-2}\chi_{\delta-i})) &=&\dim_k (R\otimes_kU^*_{{\alpha_i}-2})_{\delta-i}-\dim_k(R(-d_1)\otimes_kU^*_{{\alpha_i}-1})_{\delta-i}\\
 &=&(\delta-i+1)({\alpha_i}-1)-(\delta-i+1-d_1){\alpha_i}\\
&=&-(\delta-i+1)+d_1{\alpha_i}\\
&=&-(d-1-i)+d_1{\alpha_i}\\
&=&d_1-(d+d_1-1-i)+d_1{\alpha_i}\\
&=&d_1-(d_1{\alpha_i}+{\beta_i})+d_1{\alpha_i}\\
&=&d_1-{\beta_i}\, , 
\end{array}$$ which completes the proof of (\ref{four}). All four claims (\ref{free}) -- (\ref{four}) have been established. The proof is complete. 
\QED
\end{proof1}

\begin{Table}\label{table}Adopt Data \ref{data3}. Table \ref{tbl1} records the $S$-module structure of $$\mathcal A_{\ge d_1-1}\simeq\bigoplus S(-(i,j))^{n_{i,j}}.$$ The exponent $n_{i,j}$ sits in position $(i,j)$, where $i$ is plotted on the horizontal axis and $j$ is plotted on the vertical axis. In other words, a minimal homogeneous basis for the free $S$-module $\mathcal A_{i}$    has $n_{i,j}$ generators of bi-degree $(i,j)$.

 We describe, in words,   the transition of the generator degrees {\bf to} $\mathcal A_{i-1}$ {\bf from} $\mathcal A_i$,  beginning at the right side of the table. Recall that $\mathcal A_{i}=0$ for $\delta+1\le i$.

If $d_2-1\le i\le \delta$, then the generators of $\mathcal A_i$ are concentrated in the unique degree $2$. The rank of $\mathcal A_{\delta}$ is $1$ and if  $d_2-1\le i\le \delta-1$, then $\operatorname{rank} \mathcal A_{i}=\operatorname{rank} \mathcal A_{i+1}+1$. The relevant proof for this part of the table is contained in Corollary \ref{XXX}. Theorem \ref{claudia} contains the proof for the range $d_1-1\le i\le d_2-1$. In this range the rank of $\mathcal A_i$ remains constant at $d_1$ and the generators of $\mathcal A_i$ live in two degrees, or, occasionally, only one degree. As one looks from right to left, one free rank one  summand of {\bf lowest} shift in $\mathcal A_i$ is replaced by  a free rank one summand with shift {\bf one} higher in $\mathcal A_{i-1}$.

One continues this pattern all the way until the left boundary of $\mathcal A_{\ge d_1-1}$; namely $i=d_1-1$. The shifts in $\mathcal A_{d_1-1}$ are given by $\lfloor \frac d{d_1}\rfloor: (d_1-r)$ and $(\lfloor \frac d{d_1}\rfloor+1): r$ where $r$ is the remainder of $d$ upon division by $d_1$; that is, $d=d_1\lfloor \frac d{d_1}\rfloor+r$, with $0\le r\le d_1-1$, and $$\textstyle{\mathcal A_{d_1-1}= S(-\lfloor \frac d{d_1}\rfloor)^{d_1-r}\oplus S(-\lfloor \frac d{d_1}\rfloor-1)^r.}$$

\begin{table}
\begin{center}
\begin{tabular}{|c||c|c|c|c|c|c|c|c|c|c|c|c|c|}\hline
${\scriptstyle T\text{-deg}}$&&&&&&&&&&&&&\\\hline
$\lfloor \frac d{d_1}\rfloor+1$&${\scriptstyle r}$&$\cdots$&&&&&&&&&&&\\\hline
$\lfloor \frac d{d_1}\rfloor$&${\scriptstyle d_1-r}$&$\cdots$&&&&&&&&&&&\\\hline
\vdots& &&&&&&&&&&&&\\\hline
${\scriptstyle  \lfloor \frac d{d_1}\rfloor-\lambda+1}$&&$\cdots$&${\scriptstyle d_1}$&${\scriptstyle d_1-1}$&${\scriptstyle d_1-2}$&$\cdots$& &&&&&&\\\hline
${\scriptstyle  \lfloor \frac d{d_1}\rfloor-\lambda}$&& &&${\scriptstyle 1^*}$&${\scriptstyle 2}$&$\cdots$& &&&&&&\\\hline
\vdots& &&&&&&&&&&&&\\\hline
${\scriptstyle 3}$&& &&&&$\cdots$& ${\scriptstyle 1}$&&&&&&\\\hline
${\scriptstyle 2}$&& &&&&$\cdots$&${\scriptstyle d_1-1}$&${\scriptstyle d_1}$&${\scriptstyle d_1-1}$&${\scriptstyle d_1-2}$&$\cdots$&${\scriptstyle 1}$&\\
\hline\hline
&${\scriptstyle d_1-1}$&$\cdots$&${\scriptstyle\lambda d_1+r-1}$&${\scriptstyle\lambda d_1+r}$&${\scriptstyle\lambda d_1+r+1}$&$\cdots$&${\scriptstyle d_2-2}$&${\scriptstyle d_2-1}$&${\scriptstyle d_2}$&${\scriptstyle d_2+1}$&$\cdots$&${\scriptstyle \delta}$&${\scriptstyle xy\text{-deg}}$\\\hline
\end{tabular}

\medskip

\caption{{\bf The generator degrees for the free $S$-module $\mathcal A_{\ge d_1-1}$.} This table is described in words in Table \ref{table}. {\rm(}Also, we call the $1^*$ that appears in position ${(i,j)=(\lambda d_1+r,\lfloor \frac d{d_1}\rfloor-\lambda)}$
an ``exterior corner point''. We never refer to any entry in the left-most column as an exterior corner point. In Remark {\rm\ref{ecp}} we calculate that the exterior corner points occur when  $1\le \lambda\le \lfloor \frac d{d_1}\rfloor-2$.{\rm)}}\label{tbl1}
%
%
%
\end{center}
\end{table}
\end{Table}

\begin{remark}\label{ecp} The ``exterior corner points'' $(i,j)$ in Table \ref{tbl1} are very important
when one considers the $B$-module structure of $\mathcal A_{\ge
d_1-1}$ because degree considerations show that the corresponding
basis element of the $k$-vector space $\mathcal A_{(i,j)}$ is part
of a minimal bi-homogeneous generating set for the $B$-module
$\mathcal A_{\ge d_1-1}$. For this reason we carefully record where
these corner points occur.  Continue to write $d=d_1\lfloor \frac
d{d_1}\rfloor+r$, with $0\le r\le d_1-1$. 
Let $(i,j)$ be integers.
 We claim that    \begin{equation}\label{11-10}
d_1\le i\le
d_2-1 \quad\text{and} \quad \mathcal A_{i}=S(-j)^1\oplus S(-j-1)^{d_1-1}\end{equation} if
and only if $(i,j)=(\lambda d_1+r,\lfloor \frac
d{d_1}\rfloor-\lambda)$ for some integer $\lambda$ with $1\le
\lambda \le \lfloor \frac d{d_1}\rfloor-2$. 
(Notice that we  never refer to any entry in the left-most column of Table \ref{tbl1} as an exterior corner point.)
\end{remark}

\begin{proof} Fix an integer $j$. Apply Theorem \ref{claudia}, by way of Remark \ref{alt}, to see that  
$$\begin{array}{ccl}\text{(\ref{11-10}) holds} &\iff &d_1\le i\le
d_2-1 \quad\text{and}\quad d+d_1-1-i=jd_1+d_1-1\\&\iff &d_1\le i\le
d_2-1 \quad\text{and}\quad d-i=jd_1.\end{array}$$
 Write $d=d_1\lfloor \frac
d{d_1}\rfloor+r$ to see that 
$$\begin{array}{ccl}\text{(\ref{11-10}) holds} &\iff &d_1\le i\le
d_2-1 \quad\text{and}\quad d_1\lfloor \frac
d{d_1}\rfloor+r-i=d_1j\\
&\iff &d_1\le i\le
d_2-1 \quad\text{and}\quad \lfloor \frac
d{d_1}\rfloor-\frac{i-r}{d_1}=j.
\end{array}$$ Let $\lambda= \frac{i-r}{d_1}$. It follows  that
$$\begin{array}{ccl}\text{(\ref{11-10}) holds} &\iff &\begin{cases}\text{there exists an integer $\lambda$ with}
\ i=\lambda d_1+r,\ j= \lfloor \frac d{d_1}\rfloor-\lambda,\ \text{and}\\
d_1\le \lambda d_1+r\le
d_2-1.\end{cases}\end{array}$$
To complete the proof we show that 
$$\textstyle{d_1\le \lambda d_1+r\le d_2-1 \iff 1\le \lambda \le \lfloor \frac d{d_1}\rfloor-2.}$$
The left hand inequalities $d_1\le \lambda d_1+r$ and $1\le \lambda$ are equivalent because   $0\le r\le d_1-1$.
To see that $$\textstyle{\lambda d_1+r\le d_2-1 \iff \lambda \le \lfloor \frac d{d_1}\rfloor-2,}$$ add $d_1-r$ to both sides of the left hand inequality and use $d_1+d_2-r=d_1\lfloor \frac{d}{d_1}\rfloor $ to see that
$$\textstyle{\lambda d_1+r\le d_2-1\iff \lambda d_1+d_1\le d_1 \lfloor \frac{d}{d_1}\rfloor-1\iff \lambda\le \lfloor \frac{d}{d_1}\rfloor -\frac 1{d_1}-1\iff \lambda \le  \lfloor \frac{d}{d_1}\rfloor -2.}$$
\QED
\end{proof}

\begin{remark} In the notation of Table \ref{table}, we see that

\begin{itemize}\item[{\rm(1)}]$\mathcal A_{(d_1-1,\lfloor \frac d{d_1}\rfloor)}$ is a $k$-vector space of dimension $d_1-r$,

\item[{\rm(2)}] $\mathcal A_{(d_1-1,\lfloor \frac d{d_1}\rfloor+1)}$ is a $k$-vector space of dimension $r$,  and

\item[{\rm(3)}] if $\lambda$ is an index with $1\le \lambda\le \lfloor \frac d{d_1}\rfloor-2$, then
$\mathcal A_{(\lambda d_1+r,\lfloor \frac d{d_1}\rfloor-\lambda)}$
is a $k$-vector space of dimension $1$.
\end{itemize}
\end{remark}

\begin{definition}\label{elts}
Let $v_{1},\dots, v_{r}$ be a $k$-basis for   $\mathcal
A_{(d_1-1,\lfloor \frac d{d_1}\rfloor+1)}$; $u_1,\dots, u_{d_1-r}$ be a
$k$-basis for $\mathcal A_{(d_1-1,\lfloor \frac d{d_1}\rfloor)}$;
for each $\lambda$, with $1\le \lambda\le \lfloor \frac
d{d_1}\rfloor-2$, let $w_{\lambda}$ be a $k$-basis for $\mathcal
A_{(\lambda d_1+r,\lfloor \frac d{d_1}\rfloor-\lambda)}$.
\end{definition}

In Corollary \ref{B-mod} we prove that the elements of Definition \ref{elts} form a minimal bi-homogeneous generating set for the $B$-module $\mathcal A_{\ge d_1-1}$.  In the proof of  Corollary \ref{B-mod} we appeal to  the following fact about Hilbert functions that may well be
known to experts.

\begin{lemma}\label{Andy} Let $k$ be an algebraically closed field, $R$ a
positively graded $k$-algebra with $\dim_k R_1 >1$, $M$ a graded
$R$-module and $i \ge 0$ a fixed integer. Assume that for every
non-zero element $\ell \in R_1$ the map $M_i \to M_{i+1}$ induced by
multiplication with $\ell$ is injective. Then for any non-zero
finite dimensional $k$-subspace $V$ of $M_i$ one has $\dim_k R_1V
> \dim_k V$.
\end{lemma}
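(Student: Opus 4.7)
The plan is to argue by contradiction: assume $\dim_k R_1V \le \dim_k V$ and extract a nonzero element of $R_1$ whose multiplication map kills a nonzero element of $V$, contradicting the injectivity hypothesis. The fact that $\dim_k R_1 > 1$ together with the algebraic closedness of $k$ will be used in the final step to produce an eigenvalue.

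First I would fix any nonzero $\ell_0 \in R_1$ and pick a second element $\ell_1 \in R_1$ linearly independent from $\ell_0$, which is possible since $\dim_k R_1 > 1$. By hypothesis, multiplication by $\ell_0$ and by $\ell_1$ are injective on $M_i$, so $\dim_k \ell_0 V = \dim_k \ell_1 V = \dim_k V$. Both $\ell_0 V$ and $\ell_1 V$ are contained in $R_1V$, which is assumed to have dimension at most $\dim_k V$. A dimension count
\[
\dim_k(\ell_0 V + \ell_1 V) = 2\dim_k V - \dim_k(\ell_0V \cap \ell_1V) \le \dim_k V
\]
then forces $\ell_0 V = \ell_1 V$, and more generally the same argument (applied to any $\ell \in R_1 \setminus k\ell_0$) shows that $\ell V = \ell_0 V$ for every nonzero $\ell \in R_1$.

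Next, for each $\ell \in R_1$ and each $v \in V$, there is a unique $w \in V$ with $\ell v = \ell_0 w$ (unique by injectivity of multiplication by $\ell_0$). This defines a map $T_\ell : V \to V$, and one checks directly (again using the injectivity of $\ell_0$) that $\ell \mapsto T_\ell$ is $k$-linear in $\ell$; in particular $T_{\ell_1} \in \mathrm{End}_k(V)$ is a linear endomorphism of a nonzero finite-dimensional $k$-vector space.

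The main (and only) substantive step is the finish. Since $k$ is algebraically closed, $T_{\ell_1}$ admits an eigenvalue $\lambda \in k$ and a corresponding nonzero eigenvector $v \in V$. Translating back, $\ell_1 v = \ell_0 T_{\ell_1}v = \lambda \ell_0 v$, so $(\ell_1 - \lambda \ell_0)v = 0$. But $\ell_1 - \lambda\ell_0$ is a nonzero element of $R_1$ by linear independence of $\ell_0, \ell_1$, contradicting the assumption that every nonzero element of $R_1$ acts injectively on $M_i$. Hence $\dim_k R_1 V > \dim_k V$, as required. I do not anticipate any serious obstacle: the only ingredients are a dimension count, linearity of $\ell \mapsto T_\ell$, and the existence of an eigenvalue.
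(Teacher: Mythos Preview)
Your proof is correct and follows essentially the same approach as the paper's: both argue by contradiction, use a dimension count to force $\ell_0 V = \ell_1 V$ (the paper writes $xV = yV$), define the resulting $k$-linear endomorphism of $V$ (the paper records it as a matrix $\Phi$ in a chosen basis), and then invoke algebraic closedness to find an eigenvalue and produce a nonzero $\ell_1 - \lambda\ell_0 \in R_1$ annihilating a nonzero vector. The only cosmetic difference is that you phrase the endomorphism abstractly as $T_{\ell_1}$ while the paper writes it explicitly as a matrix.
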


\begin{proof} Suppose that $\dim_k R_1 V \le \dim_k V$. Choose two
$k$-linearly independent elements $x$ and $y$ in $R_1$. Since
$\dim_k xV=\dim_k yV=\dim_k V \ge \dim_k R_1V$, it follows that
$xV=R_1V=yV$. Let $e_1, \ldots, e_n$ be a $k$-basis of $V$. One has
\begin{equation}\label{eqM}
x[e_1, \ldots, e_n]=y [e_1, \ldots, e_n] \cdot \Phi
\end{equation}
for some $n \times n$ matrix $\Phi$ with entries in $k$. Let $v\in
k^n$ be a non-zero  eigenvector of $\Phi$ belonging to an eigenvalue $\lambda
\in k$ and write $z=[e_1, \ldots, e_n] \cdot v$. Notice that $z$ is
a non-zero element of $M_i$. Multiplying equation (\ref{eqM}) by $v$
from the right we obtain $xz=\lambda y z$. Thus $(x-\lambda y)z=0$
in $M_{i+1}$. Since $x-\lambda y$  is a non-zero element in $R_1$,
we have a contradiction to our assumption on $M$. \QED
\end{proof}

\begin{corollary}\label{B-mod}  Adopt Data \ref{data3}. The set $\{v_{1},\dots, v_{r}\} \cup \{u_1,\dots, u_{d_1-r}\} \cup
\{ w_{\lambda}\mid 1\le \lambda\le \lfloor \frac d{d_1}\rfloor-2\} $ of elements from Definition {\rm\ref{elts}}
is a minimal bi-homogeneous generating set for the $B$-module
$\mathcal A_{\ge d_1-1}$.
\end{corollary}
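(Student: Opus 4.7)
The plan is to apply the bi-graded Nakayama lemma: a set of bi-homogeneous elements of $\mathcal{A}_{\ge d_1-1}$ minimally $B$-generates iff its image in $\mathcal{A}_{\ge d_1-1}/\mathfrak{M}\mathcal{A}_{\ge d_1-1}$ is a $k$-basis, where $\mathfrak{M}=(x,y,T_1,T_2,T_3)$. At each bi-degree $(i,j)$ the count of minimal $B$-generators equals
$$\dim_k\mathcal{A}_{(i,j)}-\dim_k\bigl(x\mathcal{A}_{(i-1,j)}+y\mathcal{A}_{(i-1,j)}+S_1\mathcal{A}_{(i,j-1)}\bigr),$$
with the convention $\mathcal{A}_{i-1}=0$ when $i=d_1-1$. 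Because $\mathcal{A}_i$ is a free $S$-module by Theorem \ref{claudia}, the $S$-freeness gives $\dim_k\mathcal{A}_{(i,j)}-\dim_k S_1\mathcal{A}_{(i,j-1)}=n_{i,j}$, the entry at $(i,j)$ in Table \ref{tbl1}. So the count reduces to $n_{i,j}-\dim_k\pi_{(i,j)}\bigl((x,y)\mathcal{A}_{(i-1,j)}\bigr)$, where $\pi_{(i,j)}\colon\mathcal{A}_{(i,j)}\twoheadrightarrow\mathcal{A}_{(i,j)}/S_1\mathcal{A}_{(i,j-1)}$ is the projection onto the ``new $S$-generator'' quotient.

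First I verify that at each bi-degree of a proposed generator $\pi_{(i,j)}\bigl((x,y)\mathcal{A}_{(i-1,j)}\bigr)=0$, so the proposed elements contribute precisely $n_{i,j}$ minimal $B$-generators there. For the $u$'s and $v$'s this is immediate from $\mathcal{A}_{i-1}=0$ within $\mathcal{A}_{\ge d_1-1}$. For $w_\lambda$ at the exterior corner $(\lambda d_1+r,\lfloor d/d_1\rfloor-\lambda)$, Remark \ref{alt} (applied to $i-1$) gives $\mathcal{A}_{i-1}\simeq S(-(\lfloor d/d_1\rfloor-\lambda+1))^{d_1}$, whose $S$-generators all sit in degree $\lfloor d/d_1\rfloor-\lambda+1$; in particular $\mathcal{A}_{(i-1,\lfloor d/d_1\rfloor-\lambda)}=0$, so nothing is subtracted and the proposed $w_\lambda$ is indeed the single new generator at that bi-degree.

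The main step is to show that at every other bi-degree $(i,j)$ with $i>d_1-1$ and $n_{i,j}>0$ the equality $\dim_k\pi_{(i,j)}\bigl((x,y)\mathcal{A}_{(i-1,j)}\bigr)=n_{i,j}$ holds, so that no additional minimal $B$-generator is needed. I regard $\bar M=\mathcal{A}_{\ge d_1-1}/\mathfrak{m}_S\mathcal{A}_{\ge d_1-1}$ as a graded $R=k[x,y]$-module via the $xy$-grading; its $(i,j)$-component has $k$-dimension $n_{i,j}$. Wherever multiplication by every nonzero $\ell\in R_1$ acts injectively on $\bar M_{(i-1,j)}$, Lemma \ref{Andy} yields $\dim_k R_1\bar M_{(i-1,j)}>n_{i-1,j}$. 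Comparison with Table \ref{tbl1} (which shows $n_{i,j}\le n_{i-1,j}+1$, with equality precisely at the bi-degrees of proposed generators) then forces $R_1\bar M_{(i-1,j)}=\bar M_{(i,j)}$, which is exactly $\dim_k\pi_{(i,j)}\bigl((x,y)\mathcal{A}_{(i-1,j)}\bigr)=n_{i,j}$.

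The hardest part will be verifying the injectivity hypothesis of Lemma \ref{Andy} at the relevant components of $\bar M$. Since $\mathcal{A}$ is $\mathfrak{m}$-torsion, injectivity cannot hold universally (it already fails at $(i,j)=(\delta,2)$), so the argument must be confined to the range $d_1-1\le i\le d_2-1$ where Theorem \ref{claudia} applies, and exploit the explicit realization $\mathcal{A}_i\simeq(\ker\psi)_{i-\delta}(-2)[T_3]$ from its proof, with $\psi$ the Koszul multiplication by $g_1=f_1T_1+f_2T_2$. In the ``shrinking'' transitions where $n_{i,j}<n_{i-1,j}$ the hypothesis of Lemma \ref{Andy} necessarily fails, and one must instead give a direct argument: the description of $\bar M_{(i-1,j)}$ via $\ker\psi$ and its image in $\bar M_{(i,j)}$ under $R_1$-multiplication is amenable to a Hilbert-series computation in the spirit of Lemma \ref{Hilseries}, which will yield the required surjectivity. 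Combining the two kinds of bi-degrees covers every case outside the proposed generator list and completes the proof.
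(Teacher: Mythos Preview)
Your Nakayama framework is sound and equivalent to the paper's inductive setup; your treatment of the exterior corners and of $i=d_1-1$ is correct. But the two steps you flag as ``the hardest part'' and the ``direct argument'' are the whole proof, and you do not carry them out.

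At the lower generator degree $(i,\alpha_i)$ you want injectivity of $R_1$ on $\bar M_{(i-1,\alpha_i)}$. The observation you miss is that $\mathcal A_{(i-1,\alpha_i-1)}=0$, so $\bar M_{(i-1,\alpha_i)}=\mathcal A_{(i-1,\alpha_i)}$, and the question reduces to injectivity of each nonzero $\ell\in R_1$ on $\mathcal A_{(i-1,\alpha_i)}\subset[\Sym(I)]_{(i-1,*)}$. The paper supplies this: since the entries of the first column of $\varphi$ generate a height-two ideal, $g_1$ is a prime element of $B$ by a theorem of Hochster, so $B/(g_1)$ is a domain, and $[\Sym(I)]_{(m,*)}=[B/(g_1)]_{(m,*)}$ for $m\le d_2-1$. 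Then Lemma~\ref{Andy} applied to $U=\mathcal A_{(i-1,\alpha_i)}$ inside $\Sym(I)$ gives $\dim_k R_1U>n_{i-1,\alpha_i}$, forcing $R_1U=\mathcal A_{(i,\alpha_i)}$. (Incidentally, your parenthetical ``equality precisely at the bi-degrees of proposed generators'' is false: $n_{i,\alpha_i}=n_{i-1,\alpha_i}+1$ at \emph{every} such $i$, corner or not. This does not affect the logic but muddles the exposition.)

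At the higher generator degree $(i,\alpha_i+1)$, where $n_{i,\alpha_i+1}=n_{i-1,\alpha_i+1}-1$, you are right that Lemma~\ref{Andy} cannot apply, but your promised ``Hilbert-series computation in the spirit of Lemma~\ref{Hilseries}'' is a placeholder, not an argument. The paper's device is a one-line rank contradiction: write $W=\mathcal A_{(i,\alpha_i)}$ and let $V$ be the span of the degree-$(\alpha_i+1)$ generators of $\mathcal A_{i-1}$ and $Z$ that of $\mathcal A_i$. If $S_1W+xV$ were a proper subspace of $S_1W+Z=\mathcal A_{(i,\alpha_i+1)}$, choose $Z'\subsetneq Z$ with $S_1W+xV\subset S_1W+Z'$; then $x\mathcal A_{i-1}=x(SU+SV)\subset SW+SZ'$. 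The right side is generated by at most $\dim_kW+\dim_kZ'\le d_1-1$ elements, while the left side has $S$-rank $d_1$ because multiplication by $x$ is injective on $\mathcal A_{i-1}$ (again from the primality of $g_1$). This rank comparison, not a Hilbert-series count, is what closes the gap.
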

\begin{proof} We may harmlessly assume that $k$ is algebraically
closed. Write $X=\{v_{\epsilon}\} \cup \{u_{\mu}\} \cup \{
w_{\lambda}\} $ for the set described in the statement. First notice
that no element of $X$ is a $B$-linear combination of the others.
This is a consequence of following facts: $\mathcal A_{\ge d_1-1}$
is concentrated in $R$-degrees $\ge d_1-1$, $\{v_{\epsilon}\} \cup
\{u_{\mu}\}$ forms a homogeneous $S$-basis of $\mathcal A_{d_1-1}$
concentrated in $S$-degrees $\ge \lfloor \frac d{d_1}\rfloor$ (see
Theorem \ref{claudia}), the $w_{\lambda} $ have $R$-degrees $>
d_1-1$ and $S$-degrees $<\lfloor \frac d{d_1}\rfloor$, and as
$\lambda$ increases the $R$-degrees of the $w_{\lambda} $ increase
strictly whereas their $S$-degrees decrease strictly.

Thus it suffices to show that $\mathcal A_{\ge d_1-1}= X B$. To do
so we prove by induction on $i \ge d_1-1$ that
\begin{equation}\label{inductionstep}\mathcal A_i \subset \mathcal A_{i-1}R_1 +
XB.\end{equation} Notice that from part (1) of Corollary \ref{XXX}
we have $\mathcal A_{\ge d_2} \subset \mathcal A_{d_2-1}R$. Hence we
may assume that $i$ is in the range $d_1-1 \le i\ \le d_2-1$. For
$i=d_1-1$ equation (\ref{inductionstep}) follows from Theorem
\ref{claudia}. As for the induction step let $i \ge d_1$. From Table
\ref{tbl1} we know that $\mathcal A_{i-1}$ is a graded free
$S$-module with $t$ homogeneous basis elements in degree $j$ and
$d_1-t$ homogeneous basis elements in the next higher degree $j+1$,
where $t$ is in the range $0 \le t \le d_1-1$. Write $U$ for the
$k$-subspace of $\mathcal A_{(i-1,j)}$ spanned by the basis elements
of degree $j$ and $V$ for the $k$-subspace of $\mathcal
A_{(i-1,j+1)}$ spanned by the basis elements of degree $j+1$.
Likewise, the graded $S$-module $\mathcal A_i$ has $t+1$ homogeneous
basis elements of degree $j$ and $d_1-t-1$ homogeneous basis
elements of degree $j+1$. These basis elements span $k$-subspaces $W
\subset \mathcal A_{(i,j)}$ and $Z\subset \mathcal A_{(i,j+1)}$,
respectively. Notice that $W=\mathcal A_{(i,j)}$. Finally, write $Y$ for the $k$-subspace of $\mathcal
A_{(i,j)}$ spanned by the elements $w_{\lambda} $ of bi-degree
$(i,j)$. Accordingly it suffices to prove that
\begin{equation}\label{EQ1}
R_1U+Y=W
\end{equation}
and
\begin{equation}\label{EQ2} S_1W+xV=S_1W+Z.
\end{equation}

If $U=0$ then $t=0$ and hence $Y=W$ by definition, which shows
(\ref{EQ1}) in this case. Now we turn to the case $U \neq 0$.  We
first argue that we may apply Lemma \ref{Andy}. Recall that the
element $g_1 \in B=R[T_1,T_2,T_3]$ is of the form
$T_1f_1+T_2f_2+T_3f_3$, where $f_1, f_2, f_3$ are the entries of the
first column of $\varphi$. These entries generate an $R$-ideal of
height $2$ because $I=I_2(\varphi)$ has height two, and therefore
$g_1=T_1f_1+T_2f_2+T_3f_3$ is a prime element of $B$ according to
\cite[Theorem]{hochster}. Thus $B/(g_1)$ is a domain. Since $[{\rm
Sym}(I)]_{(m,\underline{\phantom{x}})}=[B/(g_1)]_{(m,\underline{\phantom{x}})}$
 for $m\le d_2-1$ and since $i \le d_2-1 $, we deduce that
 multiplication by any linear form in $R$ induces an injective
 $S$-linear map from $[{\rm
Sym}(I)]_{(i-1,\underline{\phantom{x}})}$ to $[{\rm
Sym}(I)]_{(i,\underline{\phantom{x}})}$. Now Lemma \ref{Andy} shows
that $\dim_k R_1U \ge \dim_k U +1$ since $U\neq 0$. On the other
hand, we have $R_1U \subset \mathcal A_{(i,j)}=W$, $\dim_k U =t$, and $\dim_k W = t+1$.
Thus Claim (\ref{EQ1}) has been proved.

To show (\ref{EQ2}) we suppose that the containment $S_1W +xV\subset
S_1W+Z$ is strict. In this case there exists a proper $k$-subspace
$Z'$ of $Z$,  so that
$$ S_1W+xV\subset S_1W+Z'.
$$
In particular, $x(S_1U+V) \subset S_1W +Z'$ and then
\begin{equation}\label{EQ3}
x(SU+SV) \subset SW+SZ'.
\end{equation}
However, the first $S$-module is isomorphic to $\mathcal
A_{(i-1,\underline{\phantom{x}})}$ and thus has rank $d_1$, whereas
the number of generators of the  second $S$-module is at most
$$ \dim_k W + \dim_k Z' \le \dim_k W + \dim_k Z -1 =d_1-1
$$
and hence this module has rank at most $d_1-1$. This is a contradiction to the
inclusion (\ref{EQ3}). \QED
\end{proof}

\bigskip

\section{Morley forms.}\label{MorleyForms}
\medskip

Adopt Data (\ref{data1}). The multiplication map
$$\mathcal A_i\otimes_S\Sym(I)_{\delta-i}\lto \mathcal A_{\delta}\simeq S(-2)$$
is a perfect pairing of $S$-modules. This perfect pairing is the starting point for the entire theory. It was first established by Jouanolou \cite{jo,jo96}. We found Bus\'e's description \cite{bu} of Jouanolou's work  to be very helpful. Our proof of this perfect pairing is given in Theorem \ref{pp}. We have already highlighted this perfect pairing  in (\ref{J}) and (\ref{sectgoal}). 
The above perfect pairing induces a homogeneous isomorphism of graded $S$-modules
\begin{equation}\tag{4.0.1}\mathcal A_i \longrightarrow {\rm Hom}_S({\rm Sym}(I)_{\delta-i},S(-2)).\end{equation}
There are situations where we are able to identify an explicit basis for ${\rm Hom}_S({\rm Sym}(I)_{\delta-i},S(-2))$; see,  for example,  Lemmas \ref{J08} and \ref{J08'}.   
Our proof of Theorem \ref{pp} is highly non-constructive. 
 On the other hand, Jouanolou's proof is  constructive. He uses Morley forms to exhibit an explicit inverse for the isomorphism (4.0.1). We summarize Jouanolou's theory of Morley forms in the present  section, and then we apply these ideas in Theorem \ref{goal5} to exhibit an explicit generating set for $\mathcal A$ when, in the language of  
  Data \ref{data1},    $2=d_1< d_2$ and $\varphi$ has a generalized zero in the first column.

  Most of the present section is purely expository.  We include this material for the reader's convenience and in order to put the ideas of Morley forms into the ambient notation.
As far as we know, the calculation of ``$q_{\beta,\delta-i-\beta}$'' in part (5) of Observation \ref{extract2'} does not appear elsewhere in the literature; on the other hand, this calculation is straightforward. These ``$q_{\beta,\delta-i-\beta}$'' are the  ingredient from the present section that is used in the proof of Theorem \ref{goal5}; see Corollary \ref{qchi}. We have calculated the ``$q_{\beta,\delta-i-\beta}$'' in greater generality than we use in the present paper because the calculation of these ``$q_{\beta,\delta-i-\beta}$'' is not the obstruction to generalizing Theorem \ref{goal5}; the obstruction is finding the appropriate generalization of Lemmas \ref{J08} and \ref{J08'}.

Begin with Data \ref{data1} and consider the ring  $B\otimes_S B$. The  ideal of the diagonal, $$(x\otimes 1-1\otimes x, y\otimes 1-1\otimes y)\, ,$$ is the kernel of the multiplication map $B\otimes_S B \rightarrow B$.  It is clear that  the elements  $g_j\otimes 1-1\otimes g_j$ of $B\otimes_S B$ belongs to this  ideal for $1\le j\le 2$. Let $H$ be a $2\times 2$ matrix with entries in $B\otimes_S B$ so that  
\begin{equation}\tag{4.0.2}[g_1\otimes 1-1\otimes g_1\, , \ g_2\otimes 1-1\otimes g_2]=
[x\otimes 1-1\otimes x\ , \ y\otimes 1-1\otimes y]\cdot H.\end{equation} Define $\Delta$ to be the image of $\det H$ in $\Sym(I)\otimes_S\Sym(I)$ under the natural map
$$B\otimes_S B\stackrel{\rho\otimes \rho}{\lto} \Sym(I)\otimes_S\Sym(I),$$ where $\rho$ is the natural quotient map
$$B\lto B/(g_1,g_2)=\Sym(I).$$
Notice that $\Delta$ is bi-homogeneous of bi-degree $(\delta,2)$ in $\Sym(I)\otimes_S\Sym(I)$, where $x\otimes 1$, $1\otimes x$, $y\otimes 1$, $1\otimes y$ have bi-degree $(1,0)$ and $T_1$,$T_2$, $T_3$ have bi-degree $(0,1)$. The element $\Delta$ is uniquely determined up to multiplication by a unit in $k$ because $\Delta$ generates the image  in ${\rm Sym}(I) \otimes_S {\rm Sym}(I)$ of the ideal
\[(g_1\otimes 1-1\otimes g_1, \, g_2\otimes 1-1\otimes g_2): (x\otimes 1-1\otimes x\, , \, y\otimes 1-1\otimes y)\subset B\otimes_S B\, .\]

 One may also view $\Sym(I)\otimes_S\Sym(I)$ as having three degrees: $x\otimes 1$, $y\otimes 1$ have tri-degree $(1,0,0)$, $1\otimes x$, $1\otimes y$ have tri-degree $(0,1,0)$, and $T_1$,$T_2$, $T_3$ have tri-degree $(0,0,1)$. 
Write $$\Delta= \sum_{i=0}^\delta \operatorname{morl}_{(i,\delta-i)},$$where each 
$$\operatorname{morl}_{(i,\delta-i)}\in (\Sym(I)\otimes_S\Sym(I))_{(i,\delta-i,2)}.$$
The tri-homogeneous elements \begin{equation*}\{\operatorname{morl}_{(i,\delta-i)}\mid 0\le i\le \delta\}\end{equation*} are the {\it Morley forms associated to the regular sequence $g_1,g_2$ in $B$.}

\begin{observation}\label{morl-ob} Adopt {\rm\ref{data1}} and let ${\rm syl}$ be a fixed generator for the one-dimensional vector space $\mathcal A_{(\delta,2)}$, as described in Remark {\rm \ref{???}}. Then the following statements hold{\rm :}
\begin{itemize}
 \item[{\rm(1)}]$\operatorname{morl}_{(\delta,0)}=\alpha_1\cdot \operatorname{syl}\otimes 1\in \Sym(I)_{\delta}\otimes_S\Sym(I)_{0}$, for some unit $\alpha_1$ in $k$,
 \item[{\rm(2)}] $\operatorname{morl}_{(0,\delta)}=1\otimes \alpha_2\cdot \operatorname{syl}\in \Sym(I)_{0}\otimes_S\Sym(I)_{\delta}$,  for some unit $\alpha_2$ in $k$, 
\item[{\rm(3)}] if $L$ is an element of $B$, then $$(L\otimes 1-1\otimes L)\Delta=0$$   in $\Sym(I)\otimes_S\Sym(I)$, and
\item[{\rm(4)}] if $b$ is an element of the $S$-module  $\Sym(I)_{\ell}$, for some non-negative integer $\ell$,  then $$(b\otimes 1)\operatorname{morl}_{(i,\delta-i)}=(1\otimes b) \operatorname{morl}_{(i+\ell,\delta-i-\ell)}$$
 in $\Sym(I)_{i+\ell}\otimes_S\Sym(I)_{\delta-i}\, $.
\end{itemize}
 \end{observation}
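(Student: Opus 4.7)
My plan is to prove (3) first, derive (4) from (3) by tri-graded bookkeeping, and then obtain (1) and (2) by combining (3) with the one-dimensionality of $\mathcal A_{(\delta,2)}$ together with an explicit specialization of $\det H$. To prove (3), I first dispatch the base cases $L = x$ and $L = y$. Multiplying the defining equation (\ref{H}) for $H$ on the right by the cofactor column $(h_{22},-h_{21})^{\text{T}}$ yields, by Cramer's rule,
\begin{equation*}
(x\otimes 1 - 1\otimes x)\det H \;=\; (g_1\otimes 1 - 1\otimes g_1)\,h_{22} \;-\; (g_2\otimes 1 - 1\otimes g_2)\,h_{21} \quad\text{in } B\otimes_S B,
\end{equation*}
and the analogous identity with $y$ in place of $x$ comes from multiplying by $(-h_{12},h_{11})^{\text{T}}$. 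Both right-hand sides lie in the kernel of $\rho\otimes\rho$, so $(x\otimes 1 - 1\otimes x)\Delta = (y\otimes 1 - 1\otimes y)\Delta = 0$ in $\Sym(I)\otimes_S\Sym(I)$. To extend to arbitrary $L\in(x,y)B$, I use a divided-difference trick: since $B = S[x,y]$, every $L\in B$ admits a presentation
\begin{equation*}
L\otimes 1 - 1\otimes L \;=\; (x\otimes 1 - 1\otimes x)\,P_L \;+\; (y\otimes 1 - 1\otimes y)\,Q_L, \qquad P_L, Q_L\in B\otimes_S B,
\end{equation*}
obtained by writing $L = \sum s_{ij}x^iy^j$ with $s_{ij}\in S$ and telescoping each monomial $x_1^i y_1^j - x_2^i y_2^j$; multiplying by $\Delta$ then kills both terms.

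Statement (4) is nearly formal. Lift $b\in\Sym(I)_\ell$ to a bi-homogeneous $\tilde b\in B_{(\ell,j)}$; either $\ell = 0$, so $\tilde b\in S$ and $\tilde b\otimes 1 = 1\otimes\tilde b$ already in $B\otimes_S B$, or $\ell\ge 1$, so $\tilde b\in(x,y)B$ and (3) applies. Either way, $(b\otimes 1)\Delta = (1\otimes b)\Delta$ in $\Sym(I)\otimes_S\Sym(I)$. Expanding via $\Delta = \sum_i \operatorname{morl}_{(i,\delta-i)}$ and equating the tri-homogeneous components of tri-degree $(i+\ell,\delta-i,j+2)$ gives the identity of (4) (with the convention that $\operatorname{morl}_{(a,b)} = 0$ when $(a,b)$ falls outside the allowed range).

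For (1), note that $\operatorname{morl}_{(\delta,0)}$ lies in $\Sym(I)_{(\delta,2)}\otimes_S\Sym(I)_{(0,0)}$, which is canonically identified with $\Sym(I)_{(\delta,2)}$; let $a$ be the corresponding element. Reading off the tri-degree $(\delta+1,0,2)$ component of $(x\otimes 1 - 1\otimes x)\Delta = 0$ gives $(x\otimes 1)\operatorname{morl}_{(\delta,0)} = 0$, that is, $xa = 0$ in $\Sym(I)$; similarly $ya = 0$. Thus $a\in \mathcal A_{(\delta,2)} = k\cdot\operatorname{syl}$ by Corollary \ref{cor2.2}(3) and Remark \ref{???}, so $a = \alpha_1 \operatorname{syl}$ for some $\alpha_1\in k$. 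To see that $\alpha_1\ne 0$, write $H$ explicitly via the Newton-telescoping expansion of $g_m(x_1,y_1) - g_m(x_2,y_2)$ used above, and specialize $x_2 = y_2 = 0$: this extracts the tri-degree $(\delta,0,2)$ piece of $\det H$, and the resulting polynomial in $x_1,y_1$ agrees, up to sign, with $\det\Xi_1 = \operatorname{syl}$ for the specific $\Xi_1$ described in Remark \ref{???}. Statement (2) follows from the symmetric calculation, specializing $x_1 = y_1 = 0$ instead and using $(1\otimes x)\operatorname{morl}_{(0,\delta)} = (1\otimes y)\operatorname{morl}_{(0,\delta)} = 0$. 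The only real obstacle is bookkeeping: reconciling the specific choice of $H$ with the specific choice of $\Xi_1$ so as to confirm that $\alpha_1, \alpha_2$ are nonzero; the rest of the argument is forced by the tri-graded structure.
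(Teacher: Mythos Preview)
Your proof is correct and follows essentially the same route as the paper, with only cosmetic reorganization. Both arguments prove (3) by multiplying the defining relation for $H$ by its classical adjoint (you phrase this as ``cofactor columns'') so that $(x\otimes 1 - 1\otimes x)\det H$ and $(y\otimes 1 - 1\otimes y)\det H$ land in the ideal $(g_1\otimes 1 - 1\otimes g_1,\; g_2\otimes 1 - 1\otimes g_2)$, which dies under $\rho\otimes\rho$; both derive (4) by lifting $b$ to $B$, invoking (3) when $\ell>0$, and reading off a tri-graded component. For (1) the paper goes one step more directly: it simply applies the quotient map $\Sym(I)\otimes_S\Sym(I)\to \Sym(I)\otimes_S S = \Sym(I)$ that kills $1\otimes x$ and $1\otimes y$, observes that the image of the defining relation for $H$ becomes $[g_1,g_2]=[x,y]\bar H$, and recognizes $\det\bar H$ as a Sylvester form by Remark~\ref{???}; your extra preliminary step (showing $xa=ya=0$ so that $a\in\mathcal A_{(\delta,2)}$) is valid but unnecessary, since the specialization already identifies $a$ with $\operatorname{syl}$ up to a unit.
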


\begin{proof} To prove (1), observe that $\operatorname{morl}_{(\delta,0)}$ is equal to the image of $\Delta$ under the natural ring surjection
$$ \Sym(I)\otimes_S \Sym(I) \lto \Sym(I)\otimes_S \frac{\Sym(I)}{\mathfrak m \Sym(I)}= \Sym(I)\otimes_S S=
\Sym(I).$$ On the other hand, the image of (4.0.2) in $B\otimes_S B/(B\otimes_S\mathfrak m B)=B$ is $[g_1,g_2]=[x,y]\cdot  \bar H$, where $\bar H$ denotes the image of $H$, and after permuting the rows of $\bar H$ this becomes the equation that is used, in Remark \ref{???}, to define $\operatorname{syl}$. 

 The proof of (2) is completely analogous to the proof of (1). One sets $x\otimes 1$ and $y\otimes 1$ to zero instead of setting $1\otimes x$ and $1\otimes y$ to zero.

To show (3), notice that $L\otimes 1-1\otimes L$ belongs to the ideal of the diagonal $(x\otimes 1-1\otimes x,y\otimes 1-1\otimes y)$. Multiply both sides of (4.0.2) on the right by the classical adjoint of $H$ to see that 
the ideal $$(x\otimes 1-1\otimes x,y\otimes 1-1\otimes y)\det H$$  is contained in the ideal $(g_1\otimes 1-1\otimes g_1,g_2\otimes 1-1\otimes g_2)$. The second ideal is sent to zero under the homomorphism $\rho\otimes \rho: B\otimes_S B\lto \Sym(I)\otimes_S \Sym(I)$. 

We now prove part (4). If $\ell=0$, then $b$ is in $S$ and there is nothing to show. If $\ell$ is positive, then assertion (3) guarantees that  $(b\otimes 1-1\otimes b)\Delta=0$. One completes the proof by examining the component of $(b\otimes 1-1\otimes b)\Delta=0$ in $\Sym(I)_{i+\ell}\otimes_S\Sym(I)_{\delta-i}$. \QED \end{proof} 

\bigskip Now that the Morley forms have been defined, we set up the rest of the notation that is used in the statement of Theorem \ref{Morley}, where we establish that the Morley forms provide an inverse to the isomorphism of (4.0.1). 
If $u\in \operatorname{Hom}_S(\Sym(I)_{\delta-i},S)$, then $u$ induces a map
\begin{equation}\label{4.25}\Sym(I)\otimes_S\Sym(I)_{\delta-i}\stackrel{1\otimes u}{\longrightarrow}\Sym(I)\otimes_S S=\Sym(I).\end{equation}
When this map is applied to $\operatorname{morl}_{(i,\delta-i)}\in \Sym(I)_i\otimes_S\Sym(I)_{\delta-i}$, the result is
$$(1\otimes u)(\operatorname{morl}_{(i,\delta-i)})\in \Sym(I)_i\, .$$ It is shown in the proof of Theorem \ref{Morley} that \begin{equation}\label{actually}\text{$(1\otimes u)(\operatorname{morl}_{(i,\delta-i)})$ actually is in $\mathcal A_i\, $.}\end{equation} Once (\ref{actually}) has been established, then it makes sense to define the $S$-module homomorphism \begin{equation}\label{BLOP}\nu_1: \operatorname{Hom}_S(\Sym_{\delta-i}(I),S)\lto \mathcal A_i\end{equation} by
$$\nu_1(u)=(1\otimes u)(\operatorname{morl}_{(i,\delta-i)}).$$

We also  define the $S$-module homomorphism \begin{equation}\label{BLIP}\nu_2: \mathcal A_i\lto \operatorname{Hom}_S(\Sym(I)_{\delta-i},S). \end{equation} If $a$ is in  $\mathcal A_i$, then multiplication by $a$ is an $S$-module homomorphism $$\mu_a: \Sym(I)_{\delta-i}\lto \mathcal A_{\delta}\, .$$ It is well-known that $\mathcal A_{\delta}$ is the free $S$-module generated by any fixed Sylvester element ${\rm syl}$. (Our proof of this statement may be found in Remark \ref{???}.) Let $\mu_{\rm syl}^{-1}:\mathcal A_{\delta}\to S$ be the inverse of  the isomorphism $\mu_{\rm syl}:S\to \mathcal A_{\delta}$. (The notation is consistent because $\Sym(I)_0=S$.) In Remark \ref{???}, the homomorphism $\mu_{\rm syl}^{-1}$ is called $\sigma$.  If $a$ is in $\mathcal A_i$, then we define $\nu_2(a)$ to be the homomorphism in   $\operatorname{Hom}_S(\Sym(I)_{\delta-i},S)$ which is given by
$$\Sym(I)_{\delta-i}\stackrel{\mu_a}{\longrightarrow} \mathcal A_{\delta}\stackrel {\mu_{\rm syl}^{-1}}{\longrightarrow}S.$$

We point out that the homomorphisms $\nu_1$ and $\nu_2$ are not homogeneous; see, however, Remark~\ref{R4.3}.


\medskip \begin{theorem}\label{Morley} {\rm(}{\bf Jouanolou} \cite[\S 3.6]{jo96} and \cite[\S 3.11]{jo}{\rm)} Adopt Data {\rm\ref{data1}} and let $i$ be an integer with $0\le i\le \delta$.
\begin{itemize}
\item[{\rm(1)}] If $u\in \operatorname{Hom}(\Sym(I)_{\delta-i},S)$, then $(1\otimes u)(\operatorname{morl}_{(i,\delta-i)})\in \mathcal A_i\, $.

\item[{\rm(2)}]  The homomorphisms
$$\nu_2: \mathcal A_i\lto \operatorname{Hom}_S(\Sym(I)_{\delta-i},S)\quad\text{and}\quad 
 \nu_1: \operatorname{Hom}_S(\Sym(I)_{\delta-i},S)\lto \mathcal A_i,$$
as described in {\rm(\ref{BLIP})} and {\rm(\ref{BLOP})}, respectively,
are inverses of one another {\rm(}up to multiplication by a unit of $k${\rm)}.
\end{itemize}
\end{theorem}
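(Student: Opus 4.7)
The plan is to establish part (1) by showing that $\nu_1(u) := (1\otimes u)(\operatorname{morl}_{(i,\delta-i)})$ is annihilated by $\mathfrak{m}^{\delta-i+1}$. First I would observe that $\nu_1$ is well-defined as a map into $\Sym(I)_i$ since $(1\otimes u)$ is $\Sym(I)$-linear on the first tensor factor (because $u(b)\in S$ for every $b\in \Sym(I)_{\delta-i}$). For any $L\in \Sym(I)_\ell$, this $\Sym(I)$-linearity yields
\[
L\cdot \nu_1(u) = (1\otimes u)\bigl((L\otimes 1)\operatorname{morl}_{(i,\delta-i)}\bigr).
\]
Applying Observation \ref{morl-ob}(4) with $b=L$ gives $(L\otimes 1)\operatorname{morl}_{(i,\delta-i)}=(1\otimes L)\operatorname{morl}_{(i+\ell,\delta-i-\ell)}$. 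For $\ell=\delta-i+1$ the second factor $\Sym(I)_{-1}$ is zero, so this vanishes. Since $\mathfrak{m}^{\delta-i+1}$ is generated by monomials of degree $\delta-i+1$ in $x,y$ (all lying in $\Sym(I)_{\delta-i+1}$), this proves $\mathfrak{m}^{\delta-i+1}\nu_1(u)=0$, placing $\nu_1(u)\in \mathcal{A}_i$.

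For part (2) my plan is to compute $\nu_2\circ \nu_1$ directly and identify it with $\alpha_1\cdot \operatorname{id}$, where $\alpha_1\in k^\times$ is the unit of Observation \ref{morl-ob}(1). Fix $u\in \operatorname{Hom}_S(\Sym(I)_{\delta-i},S)$ and $b\in \Sym(I)_{\delta-i}$. By definition,
\[
\nu_2(\nu_1(u))(b) = \mu_{\operatorname{syl}}^{-1}\bigl(b\cdot \nu_1(u)\bigr).
\]
The same $\Sym(I)$-linearity used above gives $b\cdot \nu_1(u)=(1\otimes u)((b\otimes 1)\operatorname{morl}_{(i,\delta-i)})$. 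Now the key identity: Observation \ref{morl-ob}(4), applied with $\ell=\delta-i$, yields $(b\otimes 1)\operatorname{morl}_{(i,\delta-i)}=(1\otimes b)\operatorname{morl}_{(\delta,0)}$, and then Observation \ref{morl-ob}(1) rewrites $\operatorname{morl}_{(\delta,0)}=\alpha_1\operatorname{syl}\otimes 1$, giving $(1\otimes b)\operatorname{morl}_{(\delta,0)}=\alpha_1\operatorname{syl}\otimes b$. Thus
\[
b\cdot \nu_1(u) = (1\otimes u)(\alpha_1\operatorname{syl}\otimes b) = \alpha_1 u(b)\cdot \operatorname{syl},
\]
and applying $\mu_{\operatorname{syl}}^{-1}$ gives $\nu_2(\nu_1(u))(b)=\alpha_1 u(b)$. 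Hence $\nu_2\circ \nu_1 = \alpha_1\cdot \operatorname{id}$ on $\operatorname{Hom}_S(\Sym(I)_{\delta-i},S)$.

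To finish, I would invoke Theorem \ref{pp}, which establishes that $\nu_2$ is an $S$-module isomorphism. Combined with $\nu_2\circ \nu_1 = \alpha_1\cdot \operatorname{id}$ and $\alpha_1\in k^\times$, this forces $\nu_1$ to be an isomorphism as well, with $\nu_1=\alpha_1\cdot \nu_2^{-1}$, so that $\nu_1\circ \nu_2=\alpha_1\cdot \operatorname{id}$ automatically. No separate calculation of $\nu_1\circ \nu_2$ is needed.

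The main obstacle is bookkeeping: carefully tracking the bi-grading so that the targets of the various maps match up, and keeping straight the fact that $(1\otimes u)$ is $\Sym(I)$-linear on the \emph{first} factor (not the second). Once the identity $(b\otimes 1)\operatorname{morl}_{(i,\delta-i)}=(1\otimes b)\operatorname{morl}_{(\delta,0)}$ is in hand and $\operatorname{morl}_{(\delta,0)}$ is identified with $\operatorname{syl}\otimes 1$ up to a scalar, the rest is a one-line computation, and Theorem \ref{pp} does the remaining heavy lifting.
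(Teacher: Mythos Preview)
Your proof is correct. Your argument for part~(1) is a mild variant of the paper's: rather than first showing $b\cdot\nu_1(u)=\alpha_1\,\operatorname{syl}\cdot u(b)\in\mathcal A_\delta$ and then invoking $\mathcal A_{\delta+1}=0$, you go straight to $\mathfrak m^{\delta-i+1}\nu_1(u)=0$ via Observation~\ref{morl-ob}(4). (One small caveat: Observation~\ref{morl-ob}(4) as stated produces $\operatorname{morl}_{(i+\ell,\delta-i-\ell)}$, which for $\ell=\delta-i+1$ is not among the forms defined in~(\ref{mf}); the cleanest way to say what you mean is that the tri-degree $(\delta+1,-1,2)$ component of $\Delta$ vanishes, so the right-hand side of the identity is zero.) Your computation of $\nu_2\circ\nu_1=\alpha_1\cdot\operatorname{id}$ is exactly the paper's.

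The genuine difference is in the second half of part~(2). The paper computes $\nu_1\circ\nu_2$ directly and symmetrically, using Observation~\ref{morl-ob}(4) and~(2) to obtain $(\nu_1\circ\nu_2)(a)=\alpha_2\cdot a$. You instead invoke Theorem~\ref{pp} to know that $\nu_2$ is already an isomorphism, and then deduce $\nu_1=\alpha_1\,\nu_2^{-1}$ from $\nu_2\circ\nu_1=\alpha_1\cdot\operatorname{id}$. This is logically sound, but it changes the character of the result: the paper presents Theorem~\ref{Morley} as Jouanolou's \emph{independent} proof of the perfect pairing~(\ref{J}), one of the three proofs advertised in the introduction, and the direct computation of $\nu_1\circ\nu_2$ is what makes the argument self-contained. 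Your route is shorter, but it leans on Theorem~\ref{pp}, so it no longer furnishes a stand-alone proof of the duality via Morley forms.
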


\begin{proof} Let $\syl\in \mathcal A_{\delta}$ be a fixed Sylvester form and  $\alpha_1$ and $\alpha_2$ be the fixed units in $k$ with 
$$\operatorname{morl}_{(\delta,0)}=\alpha_1\cdot \operatorname{syl}\otimes 1\quad\text{and}
\quad \operatorname{morl}_{(0,\delta)}=1\otimes \alpha_2\cdot \operatorname{syl}\, ,$$ as described in parts (1) and (2) of Observation \ref{morl-ob}. Apply parts (4) and (1) of Observation \ref{morl-ob} to see that
\begin{equation}\label{4.9}(b\otimes 1)\operatorname{morl}_{(i,\delta-i)}=(1\otimes b)\operatorname{morl}_{(\delta,0)}
=\alpha_1 \cdot {\rm syl}\otimes b
\in
\Sym(I)_\delta\otimes_S \Sym(I)_{\delta-i}\,,\end{equation}
for all $b$ in the $S$-module $\Sym(I)_{\delta-i}$.

Let $u$ be an arbitrary element of $\operatorname{Hom}_S(\Sym(I)_{\delta-i},S)$. Apply $$1\otimes u:\Sym(I)\otimes _S \Sym(I)_{\delta-i}\to \Sym(I),$$ as described in (\ref{4.25}), to each side of (\ref{4.9}) to obtain 
$$b\cdot (1\otimes u)(\operatorname{morl}_{(i,\delta-i)})=\alpha_1\cdot \operatorname{syl}\cdot u(b)\in \mathcal A_{\delta}\, .$$
Notice that this holds in particular for all $b$ in $ R_{\delta-i}\subset \Sym(I)_{\delta-i}$. Therefore,
$$(1\otimes u)(\operatorname{morl}_{(i,\delta-i)})\in \mathcal A :_{\Sym(I)}\mathfrak m^{\delta-i}= \mathcal A\, .$$ The last equality holds because $\mathcal A=0:_{\Sym(I)}\mathfrak m^{\infty}$; see Remark \ref{R2.2}. We have established assertion (1). We have also established half of assertion (2) because we have shown that $(\nu_2\circ \nu_1)(u)$ sends the element $b$ of $\Sym(I)_{\delta-i}$ to 
$$\mu_{\rm{syl}}^{-1}(b\cdot \nu_1(u))=\mu_{\rm{syl}}^{-1}(b\cdot (1\otimes u)(\operatorname{morl}_{(i,\delta-i)})) 
=\alpha_1\cdot \mu_{\rm{syl}}^{-1}(\operatorname{syl}\cdot u(b))=\alpha_1\cdot u(b);$$and therefore, $\nu_2\circ \nu_1$ is equal to multiplication by the unit $\alpha_1$  on $\operatorname{Hom}_S(\Sym(I)_{\delta-i},S)$. 

Now we prove the rest of (2). Let $a\in \mathcal A_i$. We compute 
$$\begin{array}{rll}(\nu_1\circ \nu_2)(a)&\hskip-10pt{}=(1\otimes \nu_2(a))(\operatorname{morl}_{(i,\delta-i)})\\
&\hskip-10pt{}{}=(1\otimes \mu_{\rm syl}^{-1}\circ \mu_a)(\operatorname{morl}_{(i,\delta-i)})\\
&\hskip-10pt{}{}=((1\otimes \mu_{\rm syl}^{-1})\circ (1\otimes \mu_a))(\operatorname{morl}_{(i,\delta-i)})\\
&\hskip-10pt{}{}=(1\otimes \mu_{\rm syl}^{-1})( (1\otimes a)\operatorname{morl}_{(i,\delta-i)})\\
&\hskip-10pt{}{}=(1\otimes \mu_{\rm syl}^{-1})( (a\otimes 1)\operatorname{morl}_{(0,\delta)})&\quad \text{by Observation \ref{morl-ob}, part (4)}\\ 
&\hskip-10pt{}{}=(1\otimes \mu_{\rm syl}^{-1})(a\otimes \alpha_2\cdot \syl)=\alpha_2\cdot a&\quad \text{by Observation \ref{morl-ob}, part (2).}\end{array}$$
\QED
\end{proof}

\begin{remark}\label{R4.3} If one replaces $S$ by $S(-2)$ in part (2) of Theorem~\ref{Morley}, then the isomorphism of $S$-modules $\nu_2$ becomes homogeneous and hence so does $\nu_1$. 
\end{remark}

\begin{remark}\label{qs} Adopt Data \ref{data1} and let $i$ be an integer with $0\le i\le \delta$. As an $S$-module $\Sym(I)_{\delta-i}$ is minimally generated by the monomials $x^{\beta}y^{\delta-i-\beta}$, with $0\le \beta\le \delta-i$. Thus, there are elements $q_{\beta,\delta-i-\beta}$ in $\Sym(I)_i$, with $0\le \beta\le \delta-i$, so that 
$$\mathrm {morl}_{(i,\delta-i)} =\sum_{\beta=0}^{\delta-i} q_{\beta,\delta-i-\beta}\otimes x^\beta y^{\delta-i-\beta}\quad\text{in}\ \Sym(I)_i\otimes_S\Sym(I)_{\delta-i}.$$ Furthermore, if $u\in \mathrm {Hom}_S(\Sym(I)_{\delta-i},S)$, then 
$$\nu_1(u)=(1\otimes u)(\mathrm {morl}_{(i,\delta-i)})=\sum_{\beta=0}^{\delta-i}q_{\beta,\delta-i-\beta}\cdot u(x^{\beta}y^{\delta-i-\beta}).$$\end{remark}

\bigskip
\begin{corollary}\label{qchi} Retain the notation and hypotheses of Remark {\rm\ref{qs}} with $d_1-1\le i\le d_2-2$. Recall the exact sequence 
\begin{equation}\label{rcl}0\to \mathrm{Hom}_S(\Sym(I)_{\delta-i},S)\to \xymatrix{S^{\delta-i+1} \ar[rr] ^{{\Upsilon_{d_2-i-1,1}}^{\text{\rm T}}} &&  S(1)^{d_2-i-1}}\end{equation} from part {\rm(2)} of Theorem {\rm\ref{chart}}. If $\chi$ is an element of $S^{\delta-i+1}$ with 
$\Upsilon_{d_2-i-1,1}^{\text{\rm T}}\cdot \chi=0$, then $\chi$ represents an element $u_{\chi}$ of $\mathrm{Hom}_S(\Sym(I)_{\delta-i},S)$ and 
$$\nu_1(u_{\chi})=\left[\begin{matrix} q_{0,\delta-i}&\ldots& q_{\delta-i,0}\end{matrix}\right]\cdot \chi.$$
\end{corollary}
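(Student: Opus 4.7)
The plan is to verify that this is essentially a bookkeeping consequence of Remark~\ref{qs} once one writes down carefully what the exact sequence (\ref{rcl}) says in terms of the monomial basis $y^{\delta-i},xy^{\delta-i-1},\dots,x^{\delta-i}$ of $\Sym(I)_{\delta-i}$.

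First, I would unpack the exact sequence in part~(2) of Theorem~\ref{chart}. In the range $d_1-1\le i\le d_2-2$ we have $d_1\le \delta-i\le d_2-1$, so every monomial $x^\beta y^{\delta-i-\beta}$ with $0\le\beta\le \delta-i$ maps to an $S$-module generator of $\Sym(I)_{\delta-i}$, and $g_2$ contributes no relations (its $S$-degree in $x,y$ would be negative). The only relations come from multiplying $g_1$ by the $d_2-i-1$ monomials of degree $d_2-i-2$ in $x,y$, and, by the very definition of $\Upsilon_{d_2-i-1,1}$ (Definition~\ref{upsilon}), these relations are encoded as the columns of $\Upsilon_{d_2-i-1,1}$ with respect to the chosen bases. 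Applying $\mathrm{Hom}_S(-,S)$ to the presentation therefore identifies $\mathrm{Hom}_S(\Sym(I)_{\delta-i},S)$ with the kernel of $\Upsilon_{d_2-i-1,1}^{\mathrm{T}}$, and this identification is canonical: a vector $\chi=(\chi_0,\dots,\chi_{\delta-i})^{\mathrm{T}}\in S^{\delta-i+1}$ in the kernel corresponds to the unique $S$-linear map $u_\chi\colon \Sym(I)_{\delta-i}\to S$ characterized by
\[
u_\chi(x^\beta y^{\delta-i-\beta})=\chi_\beta\qquad \text{for } 0\le\beta\le \delta-i.
\]
The condition $\Upsilon_{d_2-i-1,1}^{\mathrm{T}}\chi=0$ is exactly the requirement that $u_\chi$ vanishes on the relations $g_1\cdot x^\alpha y^{d_2-i-2-\alpha}$, hence descends to $\Sym(I)_{\delta-i}$.

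Once this identification is in hand, the formula for $\nu_1(u_\chi)$ drops out of Remark~\ref{qs}: substituting the above evaluations into
\[
\nu_1(u_\chi)=\sum_{\beta=0}^{\delta-i} q_{\beta,\delta-i-\beta}\cdot u_\chi(x^\beta y^{\delta-i-\beta})
\]
gives $\nu_1(u_\chi)=\sum_{\beta=0}^{\delta-i} q_{\beta,\delta-i-\beta}\,\chi_\beta$, which is precisely the matrix product $\bigl[q_{0,\delta-i},\dots,q_{\delta-i,0}\bigr]\cdot \chi$. The only subtlety—and the point I would take care to state explicitly—is that the ordering of coordinates of $\chi$ must match the ordering of the monomial basis used in Remark~\ref{qs}; that is, the $\beta$-th coordinate of $\chi$ is $u_\chi(x^\beta y^{\delta-i-\beta})$. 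There is no genuine obstacle here, only a consistency check between the bases chosen in Definition~\ref{upsilon} and in Remark~\ref{qs}.
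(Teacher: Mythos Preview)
Your proposal is correct and follows exactly the same approach as the paper: apply $\mathrm{Hom}_S(-,S)$ to the presentation of $\Sym(I)_{\delta-i}$ from Theorem~\ref{chart}(2), identify the kernel of $\Upsilon_{d_2-i-1,1}^{\mathrm T}$ with $\mathrm{Hom}_S(\Sym(I)_{\delta-i},S)$ via the monomial basis $y^{\delta-i},\dots,x^{\delta-i}$, and then substitute into the formula of Remark~\ref{qs}. The paper's own proof is just a terse two-sentence version of what you wrote out in detail.
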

\bigskip
\begin{proof} From part (2) of Theorem \ref{chart} we have an exact sequence 
$$  0\to\xymatrix{S^{d_2-i-1}(-1) \ar[rr]
^{\phantom{XXX}\Upsilon_{d_2-i-1,1}\phantom{XXX}}&& S^{\delta-i+1}}\longrightarrow {\rm Sym}(I)_{\delta-i}\to 0,$$
where $S^{\delta-i+1}\simeq B_{\delta-i}:=B_{(\delta-i,\underline{\phantom{x.}}\,)}$ is considered with the $S$-basis $y^{\delta-i},\cdots,x^{\delta-i}$. Apply $\operatorname{Hom}_S(\underline{\phantom{x.}},S)$ to this sequence and use Remark \ref{qs}.
\QED\end{proof}

\bigskip

An explicit formula for each $q_{\beta,\delta-i-\beta}$ is given in part (5) of the following Observation.
\begin{observation}\label{extract2'} Adopt Data {\rm\ref{data1}}. Statements {\rm(1)} -- {\rm(5)} below hold{\rm;}    {\rm(1)} -- {\rm(4)} take place in $B\otimes_SB$ and {\rm(5)} takes place  in $\Sym(I)\otimes_S\Sym(I)$.

\noindent {\rm(1)} If $a$ and $b$ are non-negative integers, then 
$$x^ay^b\otimes 1-1\otimes x^ay^b=
(x\otimes 1-1\otimes x)\sum_{\beta=0}^{a-1}x^{a-1-\beta}\otimes x^\beta y^b+
(y\otimes 1-1\otimes y)\sum_{\gamma =0}^{b-1}x^ay^{b-1-\gamma }\otimes y^\gamma .$$

\noindent {\rm(2)} If $g=\sum\limits_{\ell=0}^{d}c_{\ell}x^\ell y^{d-\ell}$ is an element of $B$, with each $c_{\ell}$ in $S$, then
$g\otimes 1-1\otimes g$ is equal to $$(x\otimes 1-1\otimes x)\left( \sum\limits_{\ell=0}^{d}\sum\limits_{\beta=0}^{\ell -1}c_{\ell}x^{\ell -1-\beta}\otimes x^\beta y^{d-\ell}\right)+ (y\otimes 1-1\otimes y)\left(\sum\limits_{\lambda=0}^{d}\sum\limits_{\gamma =0}^{{d-\lambda}-1}c_{\lambda}x^\lambda y^{{d-\lambda}-1-\gamma }\otimes y^\gamma \right).$$

\noindent {\rm(3)} If $g_1=\sum\limits_{\ell=0}^{d_1}c_{\ell,1}x^\ell y^{d_1-\ell}$ and $g_2=\sum\limits_{\ell=0}^{d_2}c_{\ell,2}x^{\ell} y^{d_2-\ell}$, then
$$\left[\begin{array}{ll} g_1\otimes 1-1\otimes g_1&g_2\otimes 1-1\otimes g_2\end{array}\right]=
\left[\begin{array}{ll}x\otimes 1-1\otimes x&y\otimes 1-1\otimes y\end{array}\right]H,$$ for
$$H= \left[\begin{array}{ll}
\sum\limits_{\ell=0}^{d_1}\ \sum\limits_{\beta=0}^{\ell -1}c_{\ell,1}x^{\ell -1-\beta}\otimes x^\beta y^{d_1-\ell}&
\sum\limits_{\ell=0}^{d_2}\ \sum\limits_{\beta=0}^{\ell -1}c_{\ell,2}x^{\ell -1-\beta}\otimes x^\beta y^{d_2-\ell}\\
\sum\limits_{\lambda=0}^{d_1}\ \sum\limits_{\gamma =0}^{{d_1-\lambda}-1}c_{\lambda,1}x^\lambda y^{{d_1-\lambda}-1-\gamma }\otimes y^\gamma &
\sum\limits_{\lambda=0}^{d_2}\ \sum\limits_{\gamma =0}^{{d_2-\lambda}-1}c_{\lambda,2}x^\lambda y^{{d_2-\lambda}-1-\gamma }\otimes y^\gamma 
 \end{array}\right].$$

\noindent {\rm(4)} If $H$ is the matrix of {\rm(3)}, then the determinant of $H$ is equal to
$$\sum\limits_{i=0}^{\delta}\ \sum\limits_{\beta=0}^{\delta -i}
\ \sum\limits_{w=0}^i  \left[\sum\limits_{(\ell,m)\in \mathfrak S_1}\  
c_{\ell,1}c_{m,2}
-   \sum\limits_{(\ell,m)\in \mathfrak S_2}
c_{m,1}c_{\ell,2} \right]   
x^{w} y^{i-w}\otimes x^{\beta}y^{\delta-i-\beta},$$ where $\mathfrak S_1$ and $\mathfrak S_2$ are the following sets of pairs of non-negative integers{\rm:}
\begin{equation}\label{sets}\mathfrak S_1=\left\{(\ell,m)\left\vert 
\begin{array}{l}\ssize \ell+m=w+1+\beta,\\\ssize \beta+1\le \ell\le d_1, \ \mathrm{and}\\\ssize 0\le m\le d_2-i-1+w\end{array}
\right.\right\}\quad\text{and}\quad
\mathfrak S_2=
\left\{(\ell,m)\left\vert 
\begin{array}{l}\ssize \ell+m=w+1+\beta,\\\ssize \beta+1\le \ell\le d_2, \ \mathrm{and}\\\ssize 0\le m\le d_1-i-1+w\end{array}
\right.\right\}.\end{equation}

\noindent {\rm(5)} In the language of Remark {\rm\ref{qs}}, once $i$ is fixed with $0\le i\le \delta$, then the Morley form $\mathrm{morl}_{(i,\delta-i)}$ is equal to $\sum\limits_{\beta=0}^{\delta-i}q_{\beta,\delta-i-\beta}\otimes x^\beta y^{\delta-i-\beta}$ in $\Sym(I)_i\otimes_S\Sym(I)_{\delta-i}$ with
\begin{equation}\label{zzz}q_{\beta,\delta-i-\beta}=\sum\limits_{w=0}^i \left[\sum\limits_{(\ell,m)\in \mathfrak S_1}\  
c_{\ell,1}c_{m,2}
-   \sum\limits_{(\ell,m)\in \mathfrak S_2}
c_{m,1}c_{\ell,2} \right]   
x^{w} y^{i-w}\in \Sym(I)_{i}\, ,\end{equation} where $\mathfrak S_1$ and $\mathfrak S_2$ are the sets of {\rm(\ref{sets})}.
\end{observation}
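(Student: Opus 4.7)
My plan is to prove the five statements essentially in the order given, since each one builds on the previous by direct algebraic manipulation; no deep machinery is required.

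For (1), the key is to first establish the two separate telescoping identities in $B\otimes_S B$,
\[
x^a\otimes 1 - 1\otimes x^a \;=\; (x\otimes 1 - 1\otimes x)\sum_{\beta=0}^{a-1} x^{a-1-\beta}\otimes x^\beta,
\]
together with its $y$-analogue. Both are verified by expanding the right side, observing the adjacent cancellations, and reading off the endpoints of the telescoping sum. I would then write
\[
x^ay^b\otimes 1 - 1\otimes x^ay^b \;=\; (x^a\otimes 1)\bigl(y^b\otimes 1 - 1\otimes y^b\bigr) + \bigl(x^a\otimes 1 - 1\otimes x^a\bigr)(1\otimes y^b)
\]
and substitute the two telescoping expressions, yielding the stated formula. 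For (2), the identity follows by applying (1) to each monomial $x^\ell y^{d-\ell}$ occurring in $g$ and scaling by $c_\ell$; and (3) is just (2) applied to $g_1$ and $g_2$ separately, giving the two columns of $H$.

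For (4), I would expand $\det H = A_1B_2 - B_1A_2$, writing $A_j$ and $B_j$ for the $(1,j)$ and $(2,j)$ entries of $H$. A typical term in $A_1B_2$ has the form
\[
c_{\ell,1}\,c_{\lambda,2}\; x^{\ell-1-\beta+\lambda}y^{d_2-1-\lambda-\gamma}\otimes x^\beta y^{d_1-\ell+\gamma}.
\]
To extract the coefficient of $x^wy^{i-w}\otimes x^\beta y^{\delta-i-\beta}$, I would set $\ell-1-\beta+\lambda=w$, giving $\ell+\lambda = w+1+\beta$, and use the degree constraints $0\le\beta\le\ell-1$, $0\le\gamma\le d_2-\lambda-1$ together with $d_1-\ell+\gamma=\delta-i-\beta$ (recalling $\delta=d_1+d_2-2$) to translate the admissible summation ranges of $\ell$ and $m:=\lambda$ into precisely the defining inequalities of $\mathfrak S_1$. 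The analogous calculation for $-B_1A_2$ gives the contribution from $\mathfrak S_2$, where the roles of the column indices $1$ and $2$ are interchanged — this is exactly the asymmetry that accounts for $c_{m,1}c_{\ell,2}$ (rather than $c_{\ell,1}c_{m,2}$) and for $\ell$ being bounded by $d_2$ in $\mathfrak S_2$.

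Finally, (5) is immediate from (4). By construction the Morley form $\operatorname{morl}_{(i,\delta-i)}$ is the image of the tri-degree $(i,\delta-i,2)$ component of $\det H$ under $\rho\otimes\rho:B\otimes_S B\to\Sym(I)\otimes_S\Sym(I)$, and this component is exactly the $i$-th slice of the triple sum in (4). Since the monomials $\{x^\beta y^{\delta-i-\beta}\mid 0\le\beta\le\delta-i\}$ continue to generate $\Sym(I)_{\delta-i}$ as an $S$-module after passing to the quotient, reading off the coefficient of $\otimes\, x^\beta y^{\delta-i-\beta}$ yields formula (\ref{zzz}). The only real obstacle throughout is the combinatorial bookkeeping in (4) — tracking which triples $(\ell,\lambda,\gamma)$ contribute to a fixed target monomial and matching the resulting inequalities to the sets $\mathfrak S_1,\mathfrak S_2$ — but the underlying algebra is routine.
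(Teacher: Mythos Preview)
Your proposal is correct and follows essentially the same approach as the paper: parts (1)--(3) are dismissed as straightforward, part (4) is handled by expanding $\det H$ as the difference of two products and tracking which quadruples of indices contribute to a fixed monomial $x^wy^{i-w}\otimes x^\beta y^{\delta-i-\beta}$, and (5) is read off from (4) by passing to the quotient. The only cosmetic difference is that the paper carries out (4) via a sequence of explicit index substitutions (replacing $\gamma$, then $\lambda$, then reordering sums) rather than framing it as coefficient extraction, but the underlying computation is identical.
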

 
\begin{proof} Assertions (1) -- (3) are straightforward calculations. We prove (4). Once (4) is established, then (5) follows immediately because the image of $\det H$ in $\Sym(I)\otimes_S\Sym(I)$ is equal to
$$\sum_{i=0}^\delta \mathrm {morl}_{(i,\delta-i)}=\sum_{i=0}^\delta\sum_{\beta=0}^{\delta-i}q_{\beta,\delta-i-\beta}\otimes x^\beta y^{\delta-i-\beta}.$$

We calculate $\det H=A-B$ with 
$$\begin{array}{rcl}
A&=&\left(\sum\limits_{\ell=0}^{d_1}\ \sum\limits_{\beta=0}^{\ell -1}c_{\ell,1}x^{\ell -1-\beta}\otimes x^\beta y^{d_1-\ell}\right)
\left(\sum\limits_{\lambda=0}^{d_2}\ \sum\limits_{\gamma =0}^{{d_2-\lambda}-1}c_{\lambda,2}x^\lambda y^{{d_2-\lambda}-1-\gamma }\otimes y^\gamma \right)
 \\
B&=& \left(\sum\limits_{\lambda=0}^{d_1}\ \sum\limits_{\gamma =0}^{{d_1-\lambda}-1}c_{\lambda,1}x^\lambda y^{{d_1-\lambda}-1-\gamma }\otimes y^\gamma \right)
\left(\sum\limits_{\ell=0}^{d_2}\ \sum\limits_{\beta=0}^{\ell -1}c_{\ell,2}x^{\ell -1-\beta}\otimes x^\beta y^{d_2-\ell}\right).\end{array}$$
We put the summation signs that  involve $\beta$ on the left in order to see that
$$\begin{array}{rcl}
A&=&\sum\limits_{\beta=0}^{d_1 -1}\ \sum\limits_{\ell=\beta+1}^{d_1}\ \sum\limits_{\lambda=0}^{d_2}\ 
\sum\limits_{\gamma =0}^{{d_2-\lambda}-1}\ c_{\ell,1}c_{\lambda,2}x^{\ell -1-\beta+\lambda}y^{{d_2-\lambda}-1-\gamma }\otimes x^\beta y^{d_1-\ell+\gamma } 
 \\
B&=& \sum\limits_{\beta=0}^{d_2 -1}\ \sum\limits_{\ell=\beta+1}^{d_2}\ \sum\limits_{\lambda=0}^{d_1}\ \sum\limits_{\gamma =0}^{{d_1-\lambda}-1}\ 
c_{\lambda,1}c_{\ell,2} 
 x^{\ell -1-\beta+\lambda}y^{{d_1-\lambda}-1-\gamma }\otimes x^\beta y^{d_2-\ell+\gamma } .\end{array}$$
Replace $\gamma $ with $d_2-2-i-\beta+\ell$ in $A$ and with $d_1-2-i-\beta+\ell$ in $B$ to obtain

$$\begin{array}{rcl}
A&=&\sum\limits_{\beta=0}^{d_1 -1}\ \sum\limits_{\ell=\beta+1}^{d_1}\ \sum\limits_{\lambda=0}^{d_2}\ 
\sum\limits_{i=\ell-1-\beta+\lambda}^{d_2-2-\beta+\ell}
\ c_{\ell,1}c_{\lambda,2}x^{\ell -1-\beta+\lambda}y^{\beta+i+1-\ell-\lambda}\otimes x^\beta y^{\delta-i-\beta} 
 \\
B&=& \sum\limits_{\beta=0}^{d_2 -1}\ \sum\limits_{\ell=\beta+1}^{d_2}\ \sum\limits_{\lambda=0}^{d_1}\ \sum\limits_{i=
\ell-1-\beta+\lambda}^{d_1-2-\beta+\ell}\ 
c_{\lambda,1}c_{\ell,2} 
 x^{\ell -1-\beta+\lambda}y^{\beta+i+1-\ell-\lambda}\otimes x^\beta y^{\delta-i-\beta} .\end{array}$$
We re-arrange the order of summation by putting the sum involving $i$ first. We see that $i$ satisfies:
$$0\le \ell-1-\beta+\lambda\le i\le d_2-2-\beta+\ell\le \delta-\beta\le \delta$$ in $A$. The analogous inequalities hold in $B$; in particular,  $i$ also satisfies $0\le i\le \delta$.   The old constraints on $i$ now become constraints on $\lambda$ and  $\ell$. It follows that
$$\begin{array}{rcl}
A&=&\sum\limits_{i=0}^\delta\ \sum\limits_{\beta=0}^{d_1 -1}\ \sum\limits_{\ell=\max\{\beta+1,i-d_2+2+\beta\}}^{d_1}\ \sum\limits_{\lambda=0}^{\min\{d_2,\beta+i+1-\ell\}}\ 
c_{\ell,1}c_{\lambda,2}x^{\ell -1-\beta+\lambda}y^{\beta+i+1-\ell-\lambda}\otimes x^\beta y^{\delta-i-\beta} 
 \\
B&=&\sum\limits_{i=0}^\delta\ \sum\limits_{\beta=0}^{d_2 -1}\ \sum\limits_{\ell=\max\{\beta+1,i-d_1+2+\beta\}}^{d_2}\ \sum\limits_{\lambda=0}^{\min\{d_1,\beta+i+1-\ell\}}\ 
c_{\lambda,1}c_{\ell,2} 
 x^{\ell -1-\beta+\lambda}y^{\beta+i+1-\ell-\lambda}\otimes x^\beta y^{\delta-i-\beta} .\end{array}$$

  In $A$, the third summation sign represents the empty sum unless $\beta\le d_1-1$ and $\beta\le \delta-i$. Thus, the following four choices for the second summation sign all yield the same value for $A$:
$$\sum_{0\le \beta},\quad \sum_{\beta=0}^{d_1-1},\quad \sum_{\beta=0}^{\delta-i},\quad\text{or}\quad \sum_{\beta=0}^{\min\{d_1-1,\delta-i\}}.$$An analogous statement holds for $B$. We conclude that

 $$\begin{array}{rcl}
A&=&\sum\limits_{i=0}^\delta\ \sum\limits_{\beta=0}^{\delta-i}\ \sum\limits_{\ell=\max\{\beta+1,i-d_2+2+\beta\}}^{d_1}\ \sum\limits_{\lambda=0}^{\min\{d_2,\beta+i+1-\ell\}}\ 
c_{\ell,1}c_{\lambda,2}x^{\ell -1-\beta+\lambda}y^{\beta+i+1-\ell-\lambda}\otimes x^\beta y^{\delta-i-\beta} 
 \\
B&=&\sum\limits_{i=0}^\delta\ \sum\limits_{\beta=0}^{\delta-i}\ \sum\limits_{\ell=\max\{\beta+1,i-d_1+2+\beta\}}^{d_2}\ \sum\limits_{\lambda=0}^{\min\{d_1,\beta+i+1-\ell\}}\ 
c_{\lambda,1}c_{\ell,2} 
 x^{\ell -1-\beta+\lambda}y^{\beta+i+1-\ell-\lambda}\otimes x^\beta y^{\delta-i-\beta}.\end{array}$$ Replace $\lambda$ with 
$w-\ell+1+\beta$ 
to obtain $$\begin{array}{rcl}
A&=&\sum\limits_{i=0}^\delta\ \sum\limits_{\beta=0}^{\delta-i}\ \sum\limits_{\ell=\max\{\beta+1,i-d_2+2+\beta\}}^{d_1}\ \sum\limits_{w=\ell-1-\beta}^{\min\{i,d_2+\ell-1-\beta\}}\ 
c_{\ell,1}c_{w-\ell+1+\beta,2}x^{w}y^{i-w}\otimes x^\beta y^{\delta-i-\beta} 
 \\
B&=&\sum\limits_{i=0}^\delta\ \sum\limits_{\beta=0}^{\delta-i}\ \sum\limits_{\ell=\max\{\beta+1,i-d_1+2+\beta\}}^{d_2}\ \sum\limits_{w=\ell-1-\beta}^{\min\{i,d_1+\ell-1-\beta\}}\ 
c_{w-\ell+1+\beta,1}c_{\ell,2} 
 x^{w}y^{i-w}\otimes x^\beta y^{\delta-i-\beta} .\end{array}$$
Exchange the third and fourth summation signs. Keep in mind that $0\le \ell-1-\beta\le w\le i$. We see that 
$$\begin{array}{rcl}
A&=&\sum\limits_{i=0}^\delta\ \sum\limits_{\beta=0}^{\delta-i}\ \sum\limits_{w=0}^{i}\ \sum\limits_{\ell=\max\{\beta+1,i-d_2+2+\beta,w+1+\beta-d_2\}}^{\min\{d_1,w+1+\beta\}}\ 
c_{\ell,1}c_{w-\ell+1+\beta,2}x^{w}y^{i-w}\otimes x^\beta y^{\delta-i-\beta} 
 \\
B&=&\sum\limits_{i=0}^\delta\ \sum\limits_{\beta=0}^{\delta-i}\ \sum\limits_{w=0}^i\ \sum\limits_{\ell=\max\{\beta+1,i-d_1+2+\beta,w+1+\beta-d_1\}}^{\min\{d_2,w+1+\beta\}}\ 
c_{w-\ell+1+\beta,1}c_{\ell,2} 
 x^{w}y^{i-w}\otimes x^\beta y^{\delta-i-\beta}  .\end{array}$$
The parameter $w$ satisfies $0\le w\le i$; hence, $w+1+\beta-d_s\le i -d_s+2+\beta$, for $s$ equal to $1$ or $2$, and 
$$\begin{array}{rcl}
A&=&\sum\limits_{i=0}^\delta\ \sum\limits_{\beta=0}^{\delta-i}\ \sum\limits_{w=0}^{i}\ \sum\limits_{\ell=\max\{\beta+1,i-d_2+2+\beta\}}^{\min\{d_1,w+1+\beta\}}\ 
c_{\ell,1}c_{w-\ell+1+\beta,2}x^{w}y^{i-w}\otimes x^\beta y^{\delta-i-\beta} 
 \\
B&=&\sum\limits_{i=0}^\delta\ \sum\limits_{\beta=0}^{\delta-i}\ \sum\limits_{w=0}^i\ \sum\limits_{\ell=\max\{\beta+1,i-d_1+2+\beta\}}^{\min\{d_2,w+1+\beta\}}\ 
c_{w-\ell+1+\beta,1}c_{\ell,2} 
 x^{w}y^{i-w}\otimes x^\beta y^{\delta-i-\beta}  .\end{array}$$
Let $m=w-\ell+1+\beta$. The four constraints
$$\beta+1\le \ell,\quad i-d_2+2+\beta\le \ell,\quad  \ell\le d_1,\quad \ell\le w+1+\beta$$ on $\ell$ in $A$ are equivalent to
$$\beta+1\le \ell,\quad m\le d_2-i-1+w, \quad \ell\le d_1,\quad 0\le m\, ,$$ respectively; and
the four constraints
$$\beta+1\le \ell,\quad i-d_1+2+\beta\le \ell, \quad \ell\le d_2,\quad \ell\le w+1+\beta$$
on $\ell$ in $B$ are equivalent to
$$\beta+1\le \ell,\quad m\le d_1-i-1+w,\quad  \ell\le d_2,\quad 0\le m\, ,$$respectively.
It follows that 
$$A=\sum\limits_{i=0}^\delta\ \sum\limits_{\beta=0}^{\delta-i}\ \sum\limits_{w=0}^{i}\ \sum\limits_{(\ell,m)\in \mathfrak S_1}\ 
c_{\ell,1}c_{m,2}x^{w}y^{i-w}\otimes x^\beta y^{\delta-i-\beta}$$$$ B=\sum\limits_{i=0}^\delta\ \sum\limits_{\beta=0}^{\delta-i}\ \sum\limits_{w=0}^i\ \sum\limits_{(\ell,m)\in \mathfrak S_2}\ 
c_{m,1}c_{\ell,2} 
 x^{w}y^{i-w}\otimes x^\beta y^{\delta-i-\beta}, $$as desired. \QED
\end{proof}

 \bigskip 

The answer of part (5) of Observation \ref{extract2'} can be simplified when $d_1=2$ and $1\le i\le d_2-1$. This hypothesis is in effect when we apply Observation \ref{extract2'} in the proof of Theorem \ref{goal5}. Once $d_1=2$, then $\delta$ is equal to $d_2$ and we use $d_2$ in place of $\delta$ in our simplification. If $P$ is a statement, then define
\begin{equation}\label{chi}{\underline{\chi}}(P)=\begin{cases} 1&\text{if $P$ is true}\\0&\text{if $P$ is false}.\end{cases}\end{equation}

\begin{corollary}\label{extract2*} If $d_1=2$, $1\le i\le d_2-1$, and $0\le \beta\le d_2-i$, then the element 
$q_{\beta,d_2-i-\beta}$ of {\rm (\ref{zzz})} is equal to 
$$ q_{\beta,d_2-i-\beta}=\begin{cases}{\underline{\chi}}(\beta=0)\left(\sum\limits_{w=0}^i  c_{1,1}c_{w,2}x^{w} y^{i-w} 
+ \sum\limits_{w=1}^i  
 c_{2,1}c_{w-1,2}
x^{w} y^{i-w}\right) 
+{\underline{\chi}}(\beta=1)\sum\limits_{w=0}^i    
c_{2,1}c_{w,2}
x^{w} y^{i-w}\\
-{\underline{\chi}}(\beta\le d_2-i-1)c_{0,1}c_{i+1+\beta,2}x^{i}-c_{1,1}c_{i+\beta,2}x^{i}-c_{0,1}c_{i+\beta,2}x^{i-1} y\, .\end{cases}$$\end{corollary}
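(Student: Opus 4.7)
The plan is a direct enumeration: specialize the formula for $q_{\beta,\delta-i-\beta}$ from part (5) of Observation \ref{extract2'} to the situation $d_1=2$, so that $\delta=d_2$, and then simply list all pairs $(\ell,m)$ in $\mathfrak S_1$ and $\mathfrak S_2$ under the current hypotheses $1\le i\le d_2-1$ and $0\le \beta\le d_2-i$. No new ideas are needed; the only challenge is careful bookkeeping of the ranges.

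First I would analyze $\mathfrak S_1$. The constraint $\beta+1\le \ell\le d_1=2$ forces $\ell\in\{1,2\}$ when $\beta=0$, $\ell=2$ when $\beta=1$, and $\mathfrak S_1=\emptyset$ for $\beta\ge 2$. The complementary constraint $0\le m\le d_2-i-1+w$ is automatic once $m=w+1+\beta-\ell\ge 0$ is met, because $i\le d_2-1$. This yields three cases: for $\beta=0$, the pairs $(1,w)$ with $0\le w\le i$ and $(2,w-1)$ with $1\le w\le i$; for $\beta=1$, the pairs $(2,w)$ with $0\le w\le i$; for $\beta\ge 2$, nothing. Summing $c_{\ell,1}c_{m,2}x^wy^{i-w}$ over these pairs and packaging the $\beta$-dependence with $\underline{\chi}$ produces precisely the first two ``positive'' terms in the stated formula.

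Next I would analyze $\mathfrak S_2$. Here the binding constraint is $0\le m\le d_1-i-1+w=w-i+1$. Since $m\ge 0$, this forces $w\ge i-1$, so only $w=i-1$ and $w=i$ can contribute. For $w=i-1$ the only choice is $m=0$, giving $\ell=i+\beta$, which satisfies $\beta+1\le \ell\le d_2$ automatically under the hypothesis $\beta\le d_2-i$; this gives the summand $-c_{0,1}c_{i+\beta,2}x^{i-1}y$. For $w=i$ we get $m\in\{0,1\}$: the pair $(i+1+\beta,0)$ is in $\mathfrak S_2$ exactly when $i+1+\beta\le d_2$, i.e., $\beta\le d_2-i-1$, contributing $-\underline{\chi}(\beta\le d_2-i-1)\,c_{0,1}c_{i+1+\beta,2}x^i$; the pair $(i+\beta,1)$ always lies in $\mathfrak S_2$ under our hypotheses and contributes $-c_{1,1}c_{i+\beta,2}x^i$. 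Assembling the $\mathfrak S_1$ contributions minus the $\mathfrak S_2$ contributions recovers the formula in the statement.

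The only potential source of error is confusing the various range conditions, in particular distinguishing when the indicator $\underline{\chi}(\beta\le d_2-i-1)$ is genuinely needed (for the pair $(i+1+\beta,0)\in\mathfrak S_2$) from when a constraint is automatic under the hypotheses (e.g.\ $i+\beta\le d_2$). Since all other bounds reduce to $0\le \beta\le d_2-i$ and $1\le i\le d_2-1$, this is a routine verification once the cases are set up as above.
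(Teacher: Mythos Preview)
Your proposal is correct and follows essentially the same approach as the paper's proof: both specialize to $d_1=2$, enumerate $\mathfrak S_1$ via the constraint $\beta+1\le\ell\le 2$ (yielding the three $(\beta,\ell)$ cases $(0,1)$, $(0,2)$, $(1,2)$), and enumerate $\mathfrak S_2$ via the constraint $0\le m\le w-i+1$ (yielding the three $(w,m)$ cases $(i-1,0)$, $(i,0)$, $(i,1)$), then check the remaining range conditions in each case. The organization differs only cosmetically---the paper names the two pieces $A$ and $B$ and lists the $(w,m)$ pairs first in $\mathfrak S_2$, while you group by $w$---but the bookkeeping is identical.
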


\begin{proof}The parameter $d_1$ is equal to $2$; so $\delta=d_2$.  Write $q_{\beta,\delta-i-\beta}$, from (\ref{zzz}), as $A+B$, 
where $$A=\sum\limits_{w=0}^i \sum\limits_{(\ell,m)\in \mathfrak S_1}\  
c_{\ell,1}c_{m,2}x^{w} y^{i-w}\quad\text{and}\quad B=-\sum\limits_{w=0}^i 
  \sum\limits_{(\ell,m)\in \mathfrak S_2}
c_{m,1}c_{\ell,2}   
x^{w} y^{i-w}.$$
Since  $d_1=2$, the constraint $\beta+1\le \ell\le d_1$ in the definition of $\mathfrak S_1$ allows only three possible values for the pair $(\beta,\ell)$; namely, $(\beta,\ell)$ is equal to $(0,1)$, or $(0,2)$, or $(1,2)$. For each of these pairs, one sets 
$m=w+1+\beta-\ell$ and then one verifies that $0\le m\le d_2-i-1+w$ becomes $1\le w$ when $(\beta,\ell)=(0,2)$ and automatically holds otherwise. It follows that 
$$A= {\underline{\chi}}(\beta=0)\left(\sum\limits_{w=0}^i  c_{1,1}c_{w,2}x^{w} y^{i-w} 
+ \sum\limits_{w=1}^i  
 c_{2,1}c_{w-1,2}
x^{w} y^{i-w}\right)\\
+{\underline{\chi}}(\beta=1)\sum\limits_{w=0}^i    
c_{2,1}c_{w,2}
x^{w} y^{i-w}.$$
Now we simplify $B$. The constraint $0\le m\le d_1-i-1+w$ in the definition of $\mathfrak S_2$ becomes $${0\le m\le w+1-i},$$ when $d_1=2$. On the other hand, the parameter $w$ in $B$ is always at most $i$. Thus, the pair $(w,m)$ must satisfy $0\le m\le w+1-i$ and $w\le i\le w+1$. It follows that there are only three possible values for the pair $(w,m)$, namely, $(w,m)$ is equal to $(i,0)$ or $(i,1)$, or $(i-1,0)$. Use $\ell+m=w+1+\beta$ to define $\ell$. Verify that $\beta+1\le \ell\le d_2$ becomes $\beta\le d_2-i-1$ when $(w,m)=(i,0)$ and holds automatically otherwise. It follows that 
$$B=-{\underline{\chi}}(\beta\le d_2-i-1)c_{0,1}c_{i+1+\beta,2}x^{i}-c_{1,1}c_{i+\beta,2}x^{i}-c_{0,1}c_{i+\beta,2}x^{i-1} y\, .$$ \QED
\end{proof}

\section{Explicit generators for $\mathcal A$ when $d_1=2$.}\label{2=d1}

Adopt Data (\ref{data1}) with  $d_1=2$. If $d_2=2$, then the generators of $\mathcal A_{\ge 1}$ are explicitly described in Corollary \ref{XXX}. If   $\varphi$ does
not have a generalized zero in the first column, then Bus\'e \cite[Prop.~3.2]{bu} gave explicit formulas for the generators of 
 $\mathcal A_{\ge 1}$.  The present  section is concerned with the following situation.
\begin{data}\label{data4}
Adopt Data (\ref{data1}) with  $2=d_1< d_2$. Assume also that $\varphi$ has a generalized zero in the first column.\end{data}

Let $\mathcal C$ be the curve of Remark \ref{curve}. We recall that if the parameterization $\mathbb P^1_k\to \mathcal C$ is birational, then the hypothesis that $\varphi$ has a generalized zero in the first column is equivalent to the statement that there is a singularity of multiplicity $d_2$ on $\mathcal C$. 

In this section we assume that Data \ref{data4} is in effect and we describe explicitly {\bf all} of the defining
equations of the Rees algebra $\mathcal R$. Of course, the results of Section \ref{3} apply in the present section; so we know the degrees of the generators of $\mathcal A_{\ge d_1-1}=\mathcal A_{\ge 1}$, a priori, from Table \ref{tbl1}. Indeed, in the context of the present section, Table \ref{tbl1} is given in Tables \ref{tbl2} and \ref{tbl3}. There are two ways in which the present tables are simpler than the general table. First of all, the description of the generator degrees depends on the remainder of a division by $d_1$. When $d_1$ is $2$, there are only two possible remainders: $0$ or $1$. Secondly, Table \ref{tbl1}, together with Corollary \ref{B-mod}, describes the degrees of  the generators of $\mathcal A_{\ge d_1-1}$ as  an $S$-module and a $B$-module. 
Furthermore, the $S$-module $\mathcal A_{\ge d_1-1}$ is free according to part (1) of Corollary~\ref{2.12}. When $d_1=2$, then $\mathcal A_{\ge d_1-1}$ is, in fact, equal to all of $\mathcal A_{\ge 1}$. At any rate, in the present section we give much more than the degrees of the generators. We give explicit formulas for the minimal generators of $\mathcal A_{\ge 1}$. 


\begin{table}\begin{center}
\begin{tabular}{|c||c|c|c|c|c|c|c|c|c|c|c|}\hline
$T$-deg&&&&&&&&&&&\\\hline
$d$&$1^*$&&&&&&&&&&\\\hline
\vdots& &&&&&&&&&&\\\hline
$\lceil \frac d{2}\rceil$&&$1^*$&&&&&&&&&\\\hline
$  \lfloor \frac d{2}\rfloor $&&$1^*$&$2$&$1$&&& &&&&\\\hline
$   \lfloor \frac d{2}\rfloor-1 $&& &&$1^*$&$\cdots$&& &&&&\\\hline
\vdots& &&&&&&&&&&\\\hline
$4$&& &&& $\cdots$&$1$&&&&&\\\hline
$3$&& &&&&$1^*$& $2$&$1$&&&\\\hline
$2$&& &&&&&&$1^*$&$2$&$1$&\\
\hline\hline
&$ 0 $&$ 1 $&$ 2$&$3$&$\cdots$&$d_2-4$&$d_2-3$&$d_2-2$&$d_2-1$&$d_2$&$xy$-deg\\\hline
\end{tabular}
\medskip\caption{{\bf The generator degrees for the free $S$-module $\mathcal A$, in the presence of Data \ref{data4},  when $d$ is odd.} {\rm(}The elements that correspond to the generator degrees marked by $*$ are minimal generators for the $\Sym(I)$-ideal $\mathcal A$.{\rm)}}\label{tbl2}
\end{center}\end{table}

\begin{table}
\begin{center}
\begin{tabular}{|c||c|c|c|c|c|c|c|c|c|c|c|}\hline
$T$-deg&&&&&&&&&&&\\\hline
$d$&$1^*$&&&&&&&&&&\\\hline
\vdots& &&&&&&&&&&\\\hline
$\frac d{2}$&&$2^*$&1&&&&&&&&\\\hline
$   \frac d{2}-1 $&& &$1^*$&$2$&$\cdots$&& &&&&\\\hline
\vdots& &&&&&&&&&&\\\hline
$4$&& &&& $\cdots$&$1$&&&&&\\\hline
$3$&& &&&&$1^*$& $2$&$1$&&&\\\hline
$2$&& &&&&&&$1^*$&$2$&$1$&\\
\hline\hline
&$ 0 $&$ 1 $&$ 2$&$3$&$\cdots$&$d_2-4$&$d_2-3$&$d_2-2$&$d_2-1$&$d_2$&$xy$-deg\\\hline
\end{tabular}

\medskip
\caption{{\bf The generator degrees for the free $S$-module $\mathcal A$, in the presence of Data \ref{data4},  when $d$ is even and and the morphism $\mathbb P^1_k\to \mathcal C$ of Remark \ref{curve}  is birational.} {\rm(}The elements that correspond to the generator degrees marked by $*$ are minimal generators for the $\Sym(I)$-ideal $\mathcal A$.{\rm)}}\label{tbl3}

\end{center}
\end{table}
\medskip
\begin{remark}\label{R5.2}We notice, with significant interest, how similar Tables \ref{tbl2} and \ref{tbl3} are to the degree tables of \cite[5.6]{BD}. It appears that the tables of \cite{BD} are the transpose of the tables given here. This observation is particularly striking because there is virtually no overlap between the data of \cite{BD} and the data used here. In the present section, $d_1=2$ and $d_2=d-2$; so the two parameters $d_1$ and $d_2$ are almost as far apart as possible. On the other hand, in \cite{BD}, the parameters are as close as possible: $d_1=\lfloor \frac d2\rfloor$ and $d_2=\lceil \frac d2\rceil$. 
\end{remark}

We recall that the $S$-module $\mathcal A_0$ is free of rank $1$ and is generated in degree $(0,\deg \mathcal C)$ by the implicit equation $F(T_1,T_2,T_3)$ of the curve $\mathcal C$ of Remarks \ref{curve}. Furthermore, $F$ is an $r^{\text{th}}$ root of the resultant of $g_1$ and $g_2$, where $r=1$ if the parameterization $\mathbb P^1_k\to \mathcal C$ of Remark \ref{curve} is birational, and $r=2$ otherwise. Degree considerations show that, when the parameterization $\mathbb P^1_k\to \mathcal C$ is birational, then $F$ together with the minimal generators of the $\Sym(I)$-ideal $\mathcal A_{\ge 1}$, form a minimal generating set for the $\Sym(I)$-ideal $\mathcal A$. The parameterization $\mathbb P^1_k\to \mathcal C$ is guaranteed to be birational when $d_2$ is odd; see \cite[4.6(1)]{KPU-B} or \cite[0.10]{CKPU}.
If the parameterization $\mathbb P^1_k\to \mathcal C$ is not birational, then one can reparameterize in order to obtain a birational parameterization. The column degrees of the Hilbert-Burch matrix $\varphi'$, which corresponds to the new parameterization, are $d_1'=1$ and $d_2'=\frac{d_2}2$. The matrix $\varphi'$ is ``almost linear''. The defining equations of the Rees algebra associated to $\varphi'$ are recorded explicitly in \cite[Sect.~3]{KPU1}; see also \cite[2.3]{CHW}. Thus we may assume in any case that the parametrization is birational or, equivalently, that the curve $\mathcal C$ has degree $d$. 

We first show how to modify the arbitrary Data \ref{data4} into data in a canonical form.

\begin{observation}\label{modify} If Data {\rm\ref{data4}} is adopted  with $k$ a field which is  closed under taking square roots, then one may 
assume that the first column of $\varphi$ is either  $[x^2+y^2,xy,0]^{\rm T}$ or $[y^2,x^2,0]^{\rm T}$. \end{observation}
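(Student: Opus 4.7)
The plan is to reduce the first column $\varphi_1$ to canonical form using $k$-linear row operations on $\varphi$ and linear changes of variables in $R=k[x,y]$ (both preserve Data \ref{data4} and the ideal $I$). First I would use the generalized zero to zero out the third entry of $\varphi_1$: the hypothesis furnishes a nonzero row vector $p\in k^3$ with $p\,\varphi_1=0$, and extending $p$ to a row of an invertible matrix in $\mathrm{GL}_3(k)$ and left-multiplying yields $\varphi_1=[f_1,f_2,0]^{\rm T}$ with $f_1,f_2\in R_2$. The argument used in Lemma \ref{hts} shows $I_1(\varphi_1)$ has height two, so the $k$-subspace $V=kf_1+kf_2\subset R_2$ is two-dimensional and contains no element divisible by a common linear factor (otherwise $V\subset \ell R_1$ would force $\gcd(f_1,f_2)=\ell$, violating height two). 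The remaining freedom---row operations on rows $1$ and $2$ and changes of variables in $R$---amounts to choosing an ordered basis of $V$ and acting on $R_2$ via the symmetric square representation of $\mathrm{GL}_2(k)$.

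The key invariant for the classification is the locus $T=\{\ell^2\mid\ell\in R_1\}\subset R_2$ of perfect squares. In characteristic $\ne 2$, $T\cup\{0\}$ is cut out by the nondegenerate discriminant $b^2-4ac$ on $R_2$, so it is a smooth conic in $\mathbb{P}(R_2)\cong\mathbb{P}^2$. In characteristic $2$ with $k$ closed under square roots, $T\cup\{0\}=\{\alpha^2x^2+\beta^2y^2\mid\alpha,\beta\in k\}=\langle x^2,y^2\rangle$, a two-dimensional $k$-linear subspace (the image of Frobenius, hence canonical and preserved under any change of variables). Suppose first that $V$ contains two $k$-linearly independent squares $\ell_1^2$ and $\ell_2^2$; then $\ell_1,\ell_2$ are linearly independent in $R_1$ (otherwise $\ell_1^2$ and $\ell_2^2$ would be proportional), so the change of variables $\ell_1\mapsto y$, $\ell_2\mapsto x$ gives $V=\langle y^2,x^2\rangle$, and choosing $(f_1,f_2)=(y^2,x^2)$ yields the canonical form $[y^2,x^2,0]^{\rm T}$.

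The main obstacle will be the case where $V$ contains at most one square up to scalar; the plan is to show this forces $\mathrm{char}(k)=2$ and yields the form $[x^2+y^2,xy,0]^{\rm T}$. When $\mathrm{char}(k)\ne 2$, I would consider the binary quadratic form $Q(a,b)=\mathrm{disc}_{x,y}(af_1+bf_2)$: it is not identically zero (else the projective line $V$ would lie on the smooth conic $T$), its roots $[a:b]$ parametrize squares in $V$, and a double root would correspond to $V$ being tangent to $T$ at some $\ell^2\in V$, which by direct computation of tangent lines to $T$ forces $V\subset\ell R_1$ and contradicts coprimality. Thus $Q$ has two distinct roots over $\bar k$, and square-root closure of $k$ brings both roots into $k$, returning us to the two-squares case. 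When $\mathrm{char}(k)=2$, the dimension formula gives $\dim_k(V\cap(T\cup\{0\}))\ge 2+2-3=1$, so there exists $\ell^2\in V$; after normalizing $\ell=x$, a second basis element $g\in V\setminus T$ must have nonzero $xy$-coefficient (else $V\subset\langle x^2,y^2\rangle$, putting us back in the two-squares case). Successive row operations and a rescaling $y\mapsto\gamma^{-1}y$ reduce $g$ to $xy+y^2$, where the coefficient of $y^2$ must be nonzero (else $\gcd(x^2,xy)=x$ produces a common factor). Finally, the substitution $x\mapsto x+y$ sends $\langle x^2,xy+y^2\rangle$ to $\langle x^2+y^2,xy\rangle$ in characteristic $2$, producing the second canonical form. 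The trickiest bookkeeping is in the characteristic-$2$ normalization: at each step one must rule out the degenerate sub-possibilities using only coprimality together with the one-dimensional hypothesis on $V\cap T$.
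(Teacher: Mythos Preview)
Your proof is correct and takes a genuinely different route from the paper's. The paper argues directly: after zeroing the third entry, it factors each remaining quadratic into linear forms (using square-root closure), and then splits into the case where both entries are perfect squares versus the case where one entry has two distinct linear factors; in the latter case a change of variables makes that entry $xy$, and a row operation clears the cross term from the other entry. Your argument is more structural: you study the $2$-plane $V=\langle f_1,f_2\rangle\subset R_2$ relative to the locus $T$ of perfect squares, treating $T$ as a smooth conic in characteristic $\ne 2$ and as the linear subspace $\langle x^2,y^2\rangle$ in characteristic~$2$.

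What each approach buys: the paper's argument is shorter and characteristic-free in appearance, but its key sentence ``the non-zero entries of $\varphi_1$ each factor into a product of linear forms'' is not justified in characteristic~$2$ by square-root closure alone (over a perfect but non-closed field of characteristic~$2$, a quadratic with nonzero $xy$-coefficient can be irreducible, e.g.\ $x^2+xy+y^2$ over $\mathbb F_2$). Your argument sidesteps this by never needing to factor an irreducible quadratic: in characteristic~$2$ you locate the guaranteed square in $V$ via the dimension count, normalize it to $x^2$, and reduce the complementary generator by elementary moves. As a bonus, your conic analysis shows that in characteristic $\ne 2$ the line $\mathbb P(V)$ meets the smooth conic $T$ transversally (tangency would force $V=\ell R_1$, contradicting coprimality), so $V$ always contains two independent squares and one may always take the form $[y^2,x^2,0]^{\rm T}$; the second canonical form is only genuinely needed in characteristic~$2$. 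This refinement is not visible in the paper's argument, where the two forms arise according to whether the \emph{given} entries happen to be squares rather than whether $V$ contains two squares.
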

\begin{proof}Let $\varphi_1$ represent the first column of $\varphi$. We may apply invertible row operations to $\varphi$ and linear changes of variables to $R=k[x,y]$ without changing the ideal $I$, the symmetric algebra $\Sym(I)$,  the Rees algebra $\mathcal R$, or any other essential feature of Data \ref{data4}. The hypothesis about the generalized zero allows us to use invertible row operations to put a zero into the bottom position of $\varphi_1$. The hypothesis about the height of $I_2(\varphi)$ implies that the two remaining entries of $\varphi_1$ are non-zero. 
They each factor into a product of linear forms. 
If both of these entries are perfect squares, then, after  a linear change of variables,  $\varphi_1=[y^2,x^2,0]^{\rm T}$.
Otherwise, $\varphi_1$ can be put in the form 
$\varphi_1=[\alpha_1 x^2+\alpha_2 y^2,xy,0]^{\rm T}$, for some constants $\alpha_1$ and $\alpha_2$ in $k$. The hypothesis about the height of $I_2(\varphi)$ ensures that both $\alpha$'s are units in $k$. Another linear change of variables yields the result. 
\QED\end{proof}

\begin{corollary}\label{A-ell}If Data {\rm\ref{data4}} is adopted and $i$ is an integer with $1\le i\le d_2-2$, 
then the matrix
$\Upsilon_{d_2-i-1,1}^{\text{\rm T}}$ of Corollary {\rm\ref{qchi}} is the $d_2-i-1\times d_2-i+1$ matrix 
\begin{equation}\label{first-A}\left[\begin{matrix}
T_1&T_2&T_1&0&\dots&0\\
0&\ddots&\ddots&\ddots&\ddots&\vdots\\
\vdots&\ddots&\ddots&\ddots&\ddots&0\\
0&\dots&0&T_1&T_2&T_1\end{matrix}\right]\text{\qquad  if  the first column of $\varphi$ is $[x^2+y^2,xy,0]^{\mathrm{T}}$ }\end{equation}
\begin{equation}\label{second-A}\left[\begin{matrix}
T_1&0&T_2&0&\dots&0\\
0&\ddots&\ddots&\ddots&\ddots&\vdots\\
\vdots&\ddots&\ddots&\ddots&\ddots&0\\
0&\dots&0&T_1&0&T_2\end{matrix}\right]\text{\qquad  if  the first column of $\varphi$ is $[y^2,x^2,0]^{\mathrm{T}}.\phantom{+xy,}$}\end{equation}

\end{corollary}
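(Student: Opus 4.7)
The plan is to reduce to the two canonical forms for the first column of $\varphi$ provided by Observation~\ref{modify}, and then to read off the coefficients $c_{\ell,1}$ of $g_1$ which define the matrix $\Upsilon_{d_2-i-1,1}$ in Definition~\ref{upsilon}; the corollary will then follow by a purely mechanical bookkeeping.

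First I would unfold Definition~\ref{upsilon} with $d_1 = 2$ and $n = d_2-i-1$. The matrix $\Upsilon_{n,1}$ is then of size $(n+2) \times n = (d_2-i+1) \times (d_2-i-1)$, and each column $j$ has entries $c_{0,1}, c_{1,1}, c_{2,1}$ in rows $j, j+1, j+2$ respectively and zeros elsewhere. Transposing, $\Upsilon_{d_2-i-1,1}^{\text{\rm T}}$ is the $(d_2-i-1) \times (d_2-i+1)$ matrix whose row $j$ has $c_{0,1}$ in column $j$, $c_{1,1}$ in column $j+1$, $c_{2,1}$ in column $j+2$, and zeros elsewhere. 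This is already the banded shape that appears in both (\ref{first-A}) and (\ref{second-A}); all that remains is to identify the three coefficients $c_{\ell,1}$ in each case.

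Next I would extract those coefficients from the defining identity $g_1 = [T_1,T_2,T_3] \cdot \varphi_1$ in each of the two canonical cases. When $\varphi_1 = [x^2+y^2, xy, 0]^{\text{\rm T}}$, one has
$$ g_1 \;=\; T_1(x^2+y^2) + T_2\, xy \;=\; T_1\, y^2 + T_2\, xy + T_1\, x^2, $$
so $c_{0,1}=T_1$, $c_{1,1}=T_2$, $c_{2,1}=T_1$, which produces exactly the band $(T_1,T_2,T_1)$ of (\ref{first-A}). When $\varphi_1 = [y^2, x^2, 0]^{\text{\rm T}}$, one has $g_1 = T_1\, y^2 + T_2\, x^2$, so $c_{0,1}=T_1$, $c_{1,1}=0$, $c_{2,1}=T_2$, which produces the band $(T_1,0,T_2)$ of (\ref{second-A}).

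I do not anticipate any genuine obstacle here: once Definition~\ref{upsilon} has been indexed with $d_1 = 2$ and the first column of $\varphi$ has been placed in its canonical form, the corollary is a matter of reading the coefficients of $g_1$ off the monomial basis $y^2, xy, x^2$ of $R_2$ and noting that the resulting pattern repeats along each row of the transposed matrix.
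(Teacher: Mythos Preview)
Your proposal is correct and follows essentially the same approach as the paper: both unfold Definition~\ref{upsilon} with $d_1=2$ to exhibit $\Upsilon_{d_2-i-1,1}^{\text{\rm T}}$ as a banded matrix with row pattern $(c_{0,1},c_{1,1},c_{2,1})$, and then compute $g_1=[T_1,T_2,T_3]\varphi_1$ in each canonical case to read off those coefficients.
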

\begin{proof}According to Definition \ref{upsilon}, $\Upsilon_{d_2-i-1,1}^{\text{\rm T}}$ is the  $d_2-i-1\times d_2-i+1$ matrix 
$$\left[\begin{matrix}
c_{0,1}&c_{1,1}&c_{2,1}&0&\dots&0\\
0&\ddots&\ddots&\ddots&\ddots&\vdots\\
\vdots&\ddots&\ddots&\ddots&\ddots&0\\
0&\dots&0&c_{0,1}&c_{1,1}&c_{2,1}\end{matrix}\right],$$where $g_1=c_{0,1}y^2+c_{1,1}xy+c_{2,1}x^2$. On the other hand,
$$g_1= [T_1,T_2,T_3]\left[\begin{matrix}x^2+y^2\\xy\\0\end{matrix}\right]=T_1(x^2+y^2)+T_2xy\quad\text{ or }\quad g_1= [T_1,T_2,T_3]\left[\begin{matrix}y^2\\x^2\\0\end{matrix}\right]=T_1y^2+T_2x^2.$$
\QED \end{proof}

\bigskip 
Our intention is to apply the technique of Corollary \ref{qchi} when the hypotheses of Observation \ref{modify} are in effect. For that reason, we next find the relations on matrices like those of (\ref{first-A}) and (\ref{second-A}). In the language of Definition \ref{next-def}, the matrix of (\ref{first-A}) is $A_{\ell}$ with $\ell=d_2-i+1$ and the matrix of
 (\ref{second-A}) is $\mathfrak  A_{\ell}$ with $\ell=d_2-i+1$.
\begin{definition}\label{next-def}
For each integer $\ell$, with $3\le \ell$, let $A_\ell$ and $\mathfrak A_{\ell}$ be the following $(\ell-2) \times \ell$ matrices with entries in the polynomial ring $U=k[T_1,T_2]$:
$$A_{\ell}=\left[\begin{matrix}
T_1&T_2&T_1&0&\dots&0\\
0&\ddots&\ddots&\ddots&\ddots&\vdots\\
\vdots&\ddots&\ddots&\ddots&\ddots&0\\
0&\dots&0&T_1&T_2&T_1\end{matrix}\right]\quad\text{and}\quad \mathfrak  A_{\ell}=\left[\begin{matrix}
T_1&0&T_2&0&\dots&0\\
0&\ddots&\ddots&\ddots&\ddots&\vdots\\
\vdots&\ddots&\ddots&\ddots&\ddots&0\\
0&\dots&0&T_1&0&T_2\end{matrix}\right].$$
  \end{definition}

\bigskip
In Lemmas \ref{J08} and \ref{J08'} we resolve $\operatorname{coker} A_{\ell}$ and   $\operatorname{coker} \mathfrak  A_{\ell}$, respectively. In each case, the answer depends on the parity of $\ell$. We decompose each $A_{\ell}$ into four pieces of approximately equal size. The relations on $A_{\ell}$ are constructed from maximal minors of these smaller matrices. In fact, up to re-arrangement of the rows and columns, there are only two constituent pieces for the $A_{\ell}$. We call the two primary constituent pieces $B_k$ and $L_{a\times b}$. \begin{equation}\label{Bk}\begin{array}{l}\text{For each  integer $k$, with $2\le k$, 
let $B_k$ be the $(k-1)\times k$ matrix}\\ \text{$A_{k+1}$ with the last column removed.}\end{array}\end{equation} For example, 
$$B_2=\left[\begin{matrix}T_1&T_2\end{matrix}\right],\quad \text{and}\quad B_3=\left[\begin{matrix} T_1&T_2&T_1\\0&T_1&T_2\end{matrix}\right].$$For each pair of positive integers $a,b$, 
let $L_{a\times b}$ be the $a\times b$ matrix with $T_1$ in the lower left hand corner and zero everywhere else.
\begin{equation}\label{dagger}\begin{array}{l}\text{If $C$ is a matrix, then let $C^\dagger$ represent the matrix that is obtained from $C$ by}\\ \text{rearranging both the rows and the columns of $C$ in the exact opposite order.}\end{array}\end{equation} 
In other words, if $J_k$ is the $k\times k$ matrix with
$$(J_k)_{i,j}=\begin{cases} 1&\text{if $i+j=k+1$}\\0&\text{otherwise,}\end{cases}$$ and $C$ is an $a\times b$ matrix, then $$C^\dagger=J_a CJ_b\,.$$ For example, $$B_3^{\dagger}=\left[\begin{matrix}T_2&T_1&0\\T_1&T_2&T_1\end{matrix}\right],$$ and 
the matrix $L_{a\times b}^\dagger$ is the $a\times b$ matrix with $T_1$ in the upper right hand corner and zero everywhere else. We find it mnemonically helpful to write $u_{a\times b}=L_{a\times b}^\dagger$.
 
Observe that 
$$A_\ell=\begin{cases}\left[\begin{matrix} B_k&L_{k-1\times k}\\u_{k-1\times k}&B_k^\dagger\end{matrix}\right]&\text{if $\ell=2k$}\\ \\
\left[\begin{matrix} B_{k}&L_{k-1\times k+1}\\u_{k\times k}&B_{k+1}^\dagger\end{matrix}\right]&\text{if $\ell=2k+1$.}
\end{cases}$$

\bigskip

We collect a few properties of the objects we have defined.
\begin{observation}\label{ez} The following statements hold.
\begin{itemize}\item[{\rm(1)}] The operation $^\dagger$ respects matrix multiplication in the sense that $(AB)^\dagger=A^\dagger B^\dagger$. 

\item[{\rm(2)}] The last column of $A_{k+1}$ is $[0,\dots,0,T_1]^{\rm T}$.
\item[{\rm(3)}] If the first column of $A_{k+1}$ is deleted, then the resulting matrix is $B_k^\dagger$.

\item[{\rm(4)}] If the first row of $B_{k}^\dagger$ is deleted, then the resulting matrix is $A_k$.
\item[{\rm(5)}] If the last row of $B_{k}$ is deleted, then the resulting matrix is $A_k$.

\item[{\rm(6)}] If $v=[v_1,\dots,v_k]^{\rm T}$ is a vector and $B_k^\dagger v=0$, then 
\begin{itemize}\item[{\rm(a)}]the matrix that is obtained from $B_k^\dagger$ by deleting the first row sends $v'=[0,v_1,\dots,v_{k-1}]^{\rm T}$ to zero, and
\item[{\rm(b)}]the matrix that is obtained from $B_{k+1}^\dagger$ by deleting the first row sends $v''=[0,v_1,\dots,v_k]^{\rm T}$ to zero.
\end{itemize}
\item[{\rm(7)}] If $v=[v_1,\dots,v_{k+1}]^{\rm T}$ is a vector and $B_{k+1}^\dagger v=0$, then the matrix that is obtained from $B_k$ by deleting the last row sends $v'=[v_{k-1},\dots,v_1,0]^{\rm T}$ to zero.
\end{itemize}
\end{observation}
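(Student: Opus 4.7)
My plan is to dispatch all nine items by direct inspection, without invoking any heavy machinery. The first seven items are purely combinatorial facts about the banded matrices $A_\ell$ and $B_k$, and I expect to handle them together in a single paragraph.

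For (1), note that $J_b^2 = I_b$, so $(AB)^\dagger = J_a(AB)J_c = (J_aAJ_b)(J_bBJ_c) = A^\dagger B^\dagger$. For (2)--(5), read off the first and last columns directly from the displayed form of $A_{k+1}$ and compare with the definition of $B_k$ in (\ref{Bk}). For (6), observe that row $i$ of $B_k$ agrees with row $i$ of $A_k$ for $i \leq k-2$, while the bottom row of $B_k$ is the truncation of the corresponding row of $A_{k+1}$ (which is why $B_k\neq A_{k+1}$). For (7), I would argue that deleting the top row of $B_k^\dagger = J_{k-1}B_kJ_k$ coincides with $(B_k \text{ with its bottom row deleted})^\dagger = A_k^\dagger$, and then invoke the palindromic identity $A_k^\dagger = A_k$, which is a direct consequence of the left-right symmetry of the band $(T_1, T_2, T_1)$.

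The substantive content lies in (8) and (9). My first step will be to unpack $B_k^\dagger v = 0$ into a concrete system of equations. By (7), the bottom $k-2$ rows of $B_k^\dagger$ reproduce $A_k$, so they yield the banded relations $T_1 v_{j-1} + T_2 v_j + T_1 v_{j+1} = 0$ for $j = 2, \ldots, k-1$. The top row of $B_k^\dagger$ must then be the reversal of the bottom row of $B_k$, namely $(T_2, T_1, 0, \ldots, 0)$, contributing the single ``boundary'' relation $T_2 v_1 + T_1 v_2 = 0$. For (8a), I substitute $v' = [0, v_1, \ldots, v_{k-1}]^{\mathrm{T}}$ into $A_k$: the top row of $A_k v'$ produces $T_2 v_1 + T_1 v_2$, which vanishes by the boundary relation, and each subsequent row of $A_k v'$ is one of the banded relations already known to vanish. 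The same shift-by-one calculation with one extra row at the bottom of $A_{k+1}$ gives (8b). For (9), I would use (6) to identify the matrix obtained from $B_k$ by deleting its bottom row with $A_k$, and substitute $v' = [v_{k-1}, \ldots, v_1, 0]^{\mathrm{T}}$: now the \emph{bottom} row of $A_k v'$ delivers the boundary relation $T_2 v_1 + T_1 v_2 = 0$, while the earlier rows recover the banded relations in reverse order.

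I expect no genuine obstacle; the content is pure bookkeeping. The only care required is tracking the index shifts carefully across the three padded vectors $[0, v_1, \ldots, v_{k-1}]^{\mathrm{T}}$, $[0, v]^{\mathrm{T}}$, and $[v_{k-1}, \ldots, v_1, 0]^{\mathrm{T}}$, and verifying that the range of the banded relations extracted from $B_k^\dagger v = 0$ (namely $j = 2, \ldots, k-1$) always dominates the range needed by the target matrix. I would keep the small cases $k = 3$ and $k = 4$ in view as a running sanity check while writing, since those already exhibit all the phenomena that arise.
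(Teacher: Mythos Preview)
Your approach is correct. Items (1)--(7) you handle the same way the paper does (the paper simply declares them ``obvious''), and your explicit verification of (7) via $A_k^\dagger = A_k$ is a nice touch.

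For (8) and (9), your route differs from the paper's. You unpack $B_k^\dagger v = 0$ (respectively $B_{k+1}^\dagger v = 0$) into the explicit scalar relations --- the boundary relation $T_2 v_1 + T_1 v_2 = 0$ from the top row and the banded relations $T_1 v_{i-1} + T_2 v_i + T_1 v_{i+1} = 0$ from the remaining rows --- and then substitute the padded vectors directly into the target matrix, checking row by row that each resulting equation is one already in hand. The paper instead argues structurally: for (8a), it pads $v$ to $[0;v]$, uses (3) to get $A_{k+1}[0;v] = 0$, then strips the last row and column (exploiting (4)), and finally invokes (5), (6), (7) to identify the resulting matrix with both $A_k$ and $B_k^\dagger$ minus its top row. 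For (9), the paper's argument is even more elaborate, passing through $B_{k-1}^\dagger$ and applying the $\dagger$ operation at the end. Your direct substitution is shorter and arguably more transparent; the paper's approach has the virtue of showing how items (1)--(7) compose with one another, which is presumably why they were recorded in the first place. Either route is fine for a result of this kind.
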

\begin{proof} Assertions (1) -- (5) are obvious. We prove (6). 
Start with $B_k^\dagger v=0$. Apply (3) to see that 
\begin{equation}\label{simple}A_{k+1}\left[\begin{matrix} 0\\v\end{matrix}\right]=0.
\end{equation}
This is assertion (6.b) according to (4). 
The only non-zero entry in the last column of $A_{k+1}$ lives in the last row, see (2). It follows from (\ref{simple}) that $A_{k+1}$, with the last row and column deleted, sends $v'$ to zero. This means that $A_kv'=0$. Now (4) implies (6.a). 

We now prove (7). The only non-zero entry in the last column of $B_{k+1}^\dagger$ lives in the last row according to (3) and (2), and the matrix  obtained from $B_{k+1}^\dagger$ by removing the last row and column is $B_{k}^\dagger$.  Thus   $B_{k+1}^\dagger v=0$ implies that $B_{k}^\dagger v''=0$ with $v''=[v_1,\dots,v_{k}]^{\rm T}$. Now (6.a) and (4) give $A_k v'''=0$ with $v'''=[0,v_1,\dots,v_{k-1}]^{\rm T}$. Apply the operation $^\dagger$ to $A_k v'''=0$ and use (1). The desired conclusion follows because $A_k^\dagger=A_k$, $v'''^\dagger=v'$, and $A_k$ is obtained from $B_k$ by deleting the last row according to (5). 
\QED \end{proof}

\begin{lemma}\label{J08} Adopt the notation of Definition \ref{next-def} and {\rm(\ref{Bk})}.
\begin{itemize}
\item[\rm (1)] If $\ell$ is the even integer $2k$, $m_1,\dots,m_k$ are the signed maximal order minors of $B_k$, and $$C=\left[\begin{matrix} m_1&\dots&m_{k-1}&m_k&0&-m_k&\dots&-m_2\\-m_2&\dots&-m_k&0&m_k&m_{k-1}&\dots&m_1\end{matrix}\right]^{\rm T},$$
then \begin{equation}\label{even-l}0\to U(-k)^2  \stackrel{C}{\longrightarrow} U(-1)^{\ell}\stackrel{A_{\ell}}{\longrightarrow} U^{\ell-2}\end{equation}
is exact.

\item[\rm (2)] If $\ell$ is the odd integer $2k+1$, $m_1,\dots,m_k$ are the signed maximal order minors of $B_k$,
$M_1,\dots, M_{k+1}$ are the signed maximal order minors of $B_{k+1}^\dagger$,
 and $$C=\left\{\begin{matrix} \left[\begin{matrix} m_1&\dots&m_{k-1}&m_k&0&-m_k&\dots&-m_1\\-M_{k-1}&\dots&-M_1&0&M_1&M_2&\dots&M_{k+1}\end{matrix}\right]^{\rm T}&\text{for $5\le \ell$}\\
\left[\begin{matrix} 1&0&-1\\0&T_1&-T_2\end{matrix}\right]^{\rm T}&\text{for $3= \ell$}
,\end{matrix}\right.$$
then \begin{equation}\label{odd-l}0\to U(-k)\oplus U(-k-1)  \stackrel{C}{\longrightarrow} U(-1)^{\ell}\stackrel{A_{\ell}}{\longrightarrow} U^{\ell-2}\end{equation}
is exact.
\end{itemize}
\end{lemma}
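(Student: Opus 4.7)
The plan is to establish exactness in two steps: first verifying that $A_\ell \cdot C = 0$, and then invoking the Buchsbaum--Eisenbud acyclicity criterion for the complex $0 \to U^2 \xrightarrow{C} U^\ell \xrightarrow{A_\ell} U^{\ell-2}$.

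To show $A_\ell C = 0$ in the even case $\ell = 2k$, I would use the block decomposition $A_{2k} = \bigl[\begin{smallmatrix} B_k & L \\ u & B_k^\dagger \end{smallmatrix}\bigr]$ (with $L = L_{k-1\times k}$, $u = u_{k-1 \times k}$) together with the identities in Observation \ref{ez}. The top $k$ entries of the first column of $C$ are the signed maximal minors $m_1,\ldots,m_k$ of $B_k$ and hence lie in $\ker B_k$; the off-diagonal block $L$ has only one nonzero entry, located at position $(k-1,1)$, which pairs with a zero of the column, so the top block contributes zero. For the bottom block, $u$ contributes $[T_1 m_k,0,\ldots,0]^{\mathrm{T}}$, while Observation \ref{ez}(8.a), applied to the kernel element $[-m_k,\ldots,-m_1]^{\mathrm{T}}$ of $B_k^\dagger$, gives $B_k^\dagger\cdot [0,-m_k,\ldots,-m_2]^{\mathrm{T}} = [-T_1 m_k,0,\ldots,0]^{\mathrm{T}}$; the two contributions cancel. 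The second column of $C$ is handled symmetrically via the $^\dagger$ operation. The odd case $\ell = 2k+1$ is entirely parallel, invoking additionally the fact that $M_1,\ldots,M_{k+1}$ annihilate $B_{k+1}^\dagger$ together with Observation \ref{ez}(8.b) and (9).

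For exactness, I would apply the Buchsbaum--Eisenbud criterion. The rank equation $\operatorname{rank}(C)+\operatorname{rank}(A_\ell)=\ell$ holds because $A_\ell$ has full row rank (the minor on columns $1,\ldots,\ell-2$ is $\pm T_1^{\ell-2}$) and $\operatorname{rank}(C)=2$ (from a nonvanishing $2\times 2$ minor computed below). The grade-one condition on $I_{\ell-2}(A_\ell)$ is then automatic in the Cohen--Macaulay ring $U$. The only remaining requirement is that the ideal $I_2(C)\subset U$ have grade at least two, equivalently, that it be $\mathfrak{m}$-primary.

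Verifying this primariness is the main obstacle. In the even case, two explicit $2\times 2$ minors of $C$ suffice. The minor on rows $k,k+1$ equals $m_k^2$; since the first $k-1$ columns of $B_k$ form an upper-triangular matrix with $T_1$ on the diagonal, $m_k = \pm T_1^{k-1}$, so $T_1^{2k-2}\in I_2(C)$. The minor on rows $1,2k$ equals $m_1^2 - m_2^2$; after reducing $B_k$ modulo $T_1$ only the superdiagonal of $T_2$'s remains, forcing $m_i\equiv 0\pmod{T_1}$ for $i\ge 2$ and $m_1\equiv T_2^{k-1}\pmod{T_1}$. Therefore $m_1^2 - m_2^2 \equiv T_2^{2k-2}\pmod{T_1}$, and $V(I_2(C))\subseteq V(T_1)\cap V(m_1^2-m_2^2)\subseteq V(T_1,T_2)=\{0\}$. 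The odd case is analogous: one minor provides a pure power of $T_1$ (via $m_k$ or $M_1 m_k$), and another, after reduction modulo $T_1$, provides a pure power of $T_2$ (using that the $M_j$ reduce to a single surviving monomial in $T_2$ under the same analysis applied to $B_{k+1}^\dagger$). The exceptional case $\ell=3$ is checked by direct computation. Buchsbaum--Eisenbud then delivers exactness of both \eqref{even-l} and \eqref{odd-l}.
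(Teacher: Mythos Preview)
Your proposal is correct and follows essentially the same approach as the paper's proof: verify $A_\ell C=0$ via the block decomposition of $A_\ell$ together with the identities in Observation~\ref{ez}, then invoke the Buchsbaum--Eisenbud criterion by exhibiting two $2\times 2$ minors of $C$ whose radical is $(T_1,T_2)$. The specific minors you choose (rows $k,k+1$ giving $m_k^2=\pm T_1^{\ell-2}$, and rows $1,\ell$ giving $m_1^2-m_2^2\equiv T_2^{\ell-2}\pmod{T_1}$ in the even case, and the analogous pair in the odd case) are exactly the ones the paper uses.
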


\begin{proof} Denote the $i^{\text{th}}$ column of  each matrix $C$ by $C_i$.

\smallskip    We prove (1). We first show that (\ref{even-l}) is a complex. 
Separate   the first column  of $C$ into two pieces: $$C_1=\left[\begin{matrix} t\\b\end{matrix}\right]\quad\text{with}\quad t=\left[\begin{matrix} m_1,\dots,m_k\end{matrix}\right]^{\rm T} \quad\text{and}\quad b=\left[\begin{matrix} 0,-m_k,\dots,-m_2\end{matrix}\right]^{\rm T}.$$ The definition of the $m$'s gives $B_kt=0$. The product $L_{k-1\times k} b$ is also zero because the only non-zero entry in $L_{k-1\times k}$ is multiplied by zero. The product of row $k$ from $A_\ell$ times $C_1$ is $T_1 m_k-T_1m_k=0$. The last $k-2$ rows of $u_{k-1\times k}$ are zero. Apply (1) from Observation \ref{ez} to the equation $B_kt=0$ to see that
$B_k^\dagger$ sends $t^\dagger=[m_k,\dots,m_1]^{\rm T}$ to zero. It follows from (6.a) in Observation \ref{ez}  that the last $k-2$ rows of $B_k^\dagger$ kill $b$. (The minus signs do not cause any difficulty.)  We have shown that $A_\ell C_1=0$. It follows from (1) of Observation \ref{ez} that $A_{\ell}^\dagger C_1^\dagger=0$; but $A_{\ell}^\dagger=A_\ell$ and $C_1^\dagger=C_2$. It follows that $A_{\ell}C_2=0$ and therefore $A_{\ell}C=0$ and (\ref{even-l}) is a complex. We apply the Buchsbaum-Eisenbud criterion to show that (\ref{even-l}) is exact.   The ideal of  $2\times 2$ minors of $C$ has grade  two since
$$\left\vert \begin{matrix}m_k&0\\0&m_k\end{matrix}\right\vert= T_1^{\ell-2}\quad\text{and}\quad \left\vert \begin{matrix}m_1&-m_2\\-m_2&m_1\end{matrix}\right\vert\equiv T_2^{\ell-2} \mod (T_1).$$ Assertion (1) has been established. 

\smallskip     Assertion (2) is obvious when $\ell=3$. Henceforth, we assume $5\le \ell$. We next show that (\ref{odd-l}) is a complex. 
Write $$C_1=\left[\begin{matrix} t\\b\end{matrix}\right]\quad\text{with}\quad t=\left[\begin{matrix} m_1,\dots,m_k\end{matrix}\right]^{\rm T} \quad\text{and}\quad b=\left[\begin{matrix} 0,-m_k,\dots,-m_1\end{matrix}\right]^{\rm T}.$$
The definition of the $m$'s ensures that 
$B_k t=0$. The product  $L_{k-1\times k+1} b$ is zero because the only non-zero entry of $L_{k-1\times k+1}$ is multiplied by zero. The product of row $k$ of  $A_\ell$ times  $C_1$ is $T_1m_k-T_1m_k=0$. The last $k-1$ rows of $u_{k\times k}$ are identically zero. Apply (6.b) from Observation \ref{ez} to $B_k^\dagger t^\dagger=0$ in order to see that the last $k-1$ rows of $B_{k+1}^\dagger$ times $b$ are equal to zero. (Again, the signs play no role in this part of the calculation.) We have shown that $A_{\ell}C_1=0$.
Write $$C_2=\left[\begin{matrix} t\\b\end{matrix}\right]\quad\text{with}\quad t=\left[\begin{matrix} -M_{k-1},\dots,-M_1,0\end{matrix}\right]^{\rm T} \quad\text{and}\quad b=\left[\begin{matrix} M_1,\dots,M_{k+1}\end{matrix}\right]^{\rm T}.$$
The definition of the $M$'s guarantees that $B_{k+1}^\dagger b=0$. The product $u_{k\times k}t$ is zero  because the only non-zero entry of $u_{k\times k}$ is multiplied by zero. The product of row $k-1$ of $A_\ell$ times $C_2$ is $-T_1M_1+T_1M_1=0$. The top $k-2$ rows of $L_{k-1\times k+1}$ are identically zero. Assertion (7) of Observation \ref{ez}  shows that the top $k-2$ rows of $B_k$ times $t$ are equal to zero.   We have shown that $A_\ell C=0$. Again the ideal $I_2(C)$ has grade two since
$$\left\vert \begin{matrix}m_k&0\\0&M_1\end{matrix}\right \vert =   \pm T_1^{\ell-2}\quad\text{and}\quad 
\left\vert \begin{matrix}m_1&-M_{k-1}\\-m_1&M_{k+1}\end{matrix}\right \vert \equiv \pm T_2^{\ell-2}\mod (T_1).$$ The complex (\ref{odd-l}) is also exact by the Buchsbaum-Eisenbud criterion.
\QED
\end{proof}

\medskip

We next resolve  $\operatorname{coker} \mathfrak  A_{\ell}$ for the matrices $\mathfrak  A_{\ell}$ of Definition \ref{next-def}.
\begin{example} \label{56} When $\ell$ is $5$ or $6$, the syzygy module for the matrices $\mathfrak  A_5$  and $\mathfrak  A_6$ are generated by the columns of $\mathfrak C_5$ and $\mathfrak C_6$, respectively, for 
$$\mathfrak C_5=\left[\begin{matrix}0&T_2^2\\T_2&0\\0&-T_1T_2\\-T_1&0\\0&T_1^2\end{matrix}\right]\quad\text{ and }\quad \mathfrak C_6=
\left[\begin{matrix}0&T_2^2\\T_2^2&0\\0&-T_1T_2\\-T_1T_2&0\\0&T_1^2\\T_1^2&0\end{matrix}\right].$$Indeed, the rows and columns of  $\mathfrak  A_5$  and $\mathfrak  A_6$ may be rearranged to convert these matrices into the block matrices
$$\left[\begin{array}{ll|lll}
T_1&T_2&0&0&0\\\hline
0&0&T_1&T_2&0\\
0&0&0&T_1&T_2\end{array}\right]\quad\text{and}\quad \left[\begin{array}{lll|lll}
T_1&T_2&0&0&0&0\\
0&T_1&T_2&0&0&0\\\hline
0&0&0&T_1&T_2&0\\
0&0&0&0&T_1&T_2\end{array}\right],$$respectively. The syzygies of the constituent pieces are well understood. 
\end{example}
In Lemma \ref{J08'} we prove that the pattern established in Example \ref{56} holds for all $\ell$ with $3\le \ell$. We find it convenient to let $V$ be a free module of rank two with basis $s,t$ over the polynomial ring $U=k[T_1,T_2]$. The $(\ell-2) \times \ell$ matrix $\mathfrak  A_\ell$ represents the composition 
\begin{equation}\label{comp}\Sym_{\ell-1}V\lto \Sym_{\ell+1}V \lto \frac{\Sym_{\ell+1}V}{(t^{\ell+1},st^\ell,s^\ell t,s^{\ell+1})}\, ,\end{equation} where the first map is multiplication by $\xi=T_2t^2+T_1s^2$ and the second map is the natural quotient map. The basis for  $\Sym_{\ell-1}V$ is $t^{\ell-1},st^{\ell-2},\dots,s^{\ell-2}t,s^{\ell-1}$ and the basis for $\frac{\Sym_{\ell+1}V}{(t^{\ell+1},st^\ell,s^\ell t,s^{\ell+1})}$ is $s^2t^{\ell-1},s^3t^{\ell-2},\dots,s^{\ell-1}t^2$. For each positive integer $\alpha$, let
\begin{equation}\label{ker-elt} \kappa_\alpha=\sum_{a+b=\alpha}(T_2t^2)^a(-T_1s^2)^b\in \Sym_{2\alpha}V.\end{equation}
Notice that the columns of $\mathfrak C_5$ are $st\kappa_1$ and $\kappa_2$ in $\Sym_4V$ and the columns of $\mathfrak C_6$ are $s\kappa_2$ and $t\kappa_2$ in $\Sym_5V$.

\begin{lemma}\label{pre-J08'} If $\ell\ge 3$ is an integer, then the  kernel of the composition of {\rm(\ref{comp})} is generated by 
$$\begin{cases}
st\kappa_{\frac{\ell-3}2}\text{ and } \kappa_{\frac{\ell-1}2}\text{ in } \Sym_{\ell-1}V&\text{if $\ell$ is odd}\\
s\kappa_{\frac{\ell-2}2}\text{ and } t\kappa_{\frac{\ell-2}2}\text{ in } \Sym_{\ell-1}V&\text{if $\ell$ is even}.
\end{cases}$$
\end{lemma}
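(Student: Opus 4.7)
The plan is to follow the pattern suggested by Example \ref{56}: exploit the fact that multiplication by $\xi = T_2 t^2 + T_1 s^2$ preserves the parity of the $s$-exponent of a monomial in $s,t$. With respect to the monomial bases, indexed by $s$-degree, the matrix $\mathfrak A_\ell$ therefore decomposes after a permutation of rows and columns into a direct sum of two catalecticant blocks of shape
\[
M_n = \begin{pmatrix} T_1 & T_2 & & \\ & T_1 & \ddots & \\ & & \ddots & T_2\end{pmatrix}, \qquad (n-1)\times n.
\]
A count of the even-indexed and odd-indexed monomials in each degree stratum of $\Sym V / (t^{\ell+1},st^\ell,s^\ell t,s^{\ell+1})$ shows that for $\ell = 2k$ both blocks are the matrix $M_k$, while for $\ell = 2k+1$ the even block is $M_{k+1}$ and the odd block is $M_k$.

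Each catalecticant $M_n$ has signed maximal minors $\bigl(T_2^{n-1}, -T_1 T_2^{n-2}, T_1^2 T_2^{n-3}, \ldots, (-T_1)^{n-1}\bigr)$, whose ideal has grade $2$ in $U$ (it contains both $T_1^{n-1}$ and $T_2^{n-1}$). Hilbert--Burch then shows that the kernel of $M_n\colon U^n \to U^{n-1}$ is free of rank one, generated by the transpose of this row of signed minors. Transporting these generators back through the parity decomposition into $\Sym_{\ell-1}V$, and comparing with the explicit expansion
\[
\kappa_\alpha = \sum_{a+b=\alpha}(T_2 t^2)^a(-T_1 s^2)^b,
\]
yields precisely the proposed kernel elements: from the even and odd blocks one reads off $\kappa_k$ and $st\kappa_{k-1}$ respectively when $\ell = 2k+1$, and $t\kappa_{k-1}$ and $s\kappa_{k-1}$ respectively when $\ell = 2k$.

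As an independent check, I would verify the telescoping identity $\xi\,\kappa_\alpha = T_2^{\alpha+1} t^{2\alpha+2} + (-1)^\alpha T_1^{\alpha+1} s^{2\alpha+2}$, which immediately confirms that each proposed element maps into $(t^{\ell+1},st^\ell,s^\ell t,s^{\ell+1})$ upon multiplication by $\xi$, hence lies in the kernel. The main obstacle is purely bookkeeping: setting up the parity splitting precisely, matching the entries of the signed-minor vector in each block to the coefficients of $\kappa_\alpha$, and handling the degenerate boundary case $\ell = 3$ (where the odd block is the empty $0\times 1$ matrix with one-dimensional kernel). Once these indexings are in place, Hilbert--Burch does all the substantive work.
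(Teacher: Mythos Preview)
Your approach is correct and genuinely different from the paper's. The paper does not carry out the parity decomposition in general; instead it proceeds directly: it first verifies the telescoping identity $\xi\cdot\kappa_\alpha=(T_2t^2)^{\alpha+1}+(-T_1s^2)^{\alpha+1}$ (your ``independent check''), which shows the proposed elements land in the kernel, and then applies the Buchsbaum--Eisenbud acyclicity criterion to the full complex
\[
0\to U(-a)\oplus U(-b)\longrightarrow \Sym_{\ell-1}V(-1)\longrightarrow \frac{\Sym_{\ell+1}V}{(t^{\ell+1},st^\ell,s^\ell t,s^{\ell+1})}
\]
by checking that the $2\times 2$ minors of the $\ell\times 2$ syzygy matrix have grade~$2$ (the determinant of the top two rows is $\pm T_2^{\,\cdot}$ and of the bottom two rows is $\pm T_1^{\,\cdot}$).

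Your route instead generalizes the block decomposition of Example~\ref{56}: you split source and target by parity of the $s$-exponent, reducing to two independent rank-one problems, and then invoke Hilbert--Burch on each catalecticant block $M_n$. This has the advantage that the kernel generators are \emph{discovered} rather than guessed and verified; the paper's approach has the advantage of being shorter once the candidates $\kappa_\alpha$ are already on the table. Both arguments ultimately rest on the same grade-$2$ depth condition, applied either once to a rank-two syzygy or twice to rank-one syzygies. Your bookkeeping (block sizes $M_k\oplus M_k$ for $\ell=2k$ and $M_{k+1}\oplus M_k$ for $\ell=2k+1$, and the identification of the signed-minor vectors with $t\kappa_{k-1},s\kappa_{k-1}$ or $\kappa_k,st\kappa_{k-1}$) checks out, including the boundary case $\ell=3$ where the odd block is the $0\times 1$ map and its one-dimensional kernel is $st=st\,\kappa_0$.
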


\begin{proof}Observe that $$\xi\cdot \kappa_{\alpha}=(T_2t^2+T_1s^2)\sum_{a+b=\alpha}(T_2t^2)^a(-T_1s^2)^b=
(T_2t^2)^{\alpha+1}+(-T_1s^2)^{\alpha+1}\in \Sym_{2\alpha+2}V.$$ It is now clear that
$$0\to U(-({\textstyle\frac{\ell-1}2)})\oplus U(-({\textstyle\frac{\ell+1}2)})\stackrel{\left[\begin{matrix} st\kappa_{\frac{\ell-3}2}&\kappa_{\frac{\ell-1}2}\end{matrix}\right]}{\xrightarrow{\hspace*{3cm}} }\Sym_{\ell-1}V(-1)\stackrel{\text{\rm (\ref{comp})}}{\xrightarrow{\hspace*{1cm}} } \frac{\Sym_{\ell+1}V}{(t^{\ell+1},st^\ell,s^\ell t,s^{\ell+1})}
$$
and
$$0\to U(-({\textstyle\frac{\ell}2)})^2\stackrel{\left[\begin{matrix} s\kappa_{\frac{\ell-2}2}&t\kappa_{\frac{\ell-2}2}\end{matrix}\right]}{\xrightarrow{\hspace*{3cm}} }\Sym_{\ell-1}V(-1)\stackrel{\text{\rm (\ref{comp})}}{\xrightarrow{\hspace*{1cm}} } \frac{\Sym_{\ell+1}V}{(t^{\ell+1},st^\ell,s^\ell t,s^{\ell+1})}
$$
are complexes, when $\ell$ is  odd or even, respectively. Apply the Buchsbaum-Eisenbud criterion to see that these complexes are exact: the determinant of the first two rows of the  matrices  $\left[\begin{matrix} st\kappa_{\frac{\ell-3}2}&\kappa_{\frac{\ell-1}2}\end{matrix}\right]$ and $\left[\begin{matrix} s\kappa_{\frac{\ell-2}2}&t\kappa_{\frac{\ell-2}2}\end{matrix}\right]$ is plus or minus a power of $T_2$, and the determinant of the last two rows of these matrices  is plus or minus a power of $T_1$.
\QED\end{proof}

\begin{lemma}\label{J08'} Adopt the notation of Definition {\rm (\ref{next-def})}.
\begin{itemize}
\item[\rm(1)] If $\ell$ is the even integer $2k$ and 
$$\mathfrak C_{\ell}=\left[\begin{matrix} 0&T_2^{k-1}\\T_2^{k-1}&0\\0&(-T_1)T_2^{k-2}\\(-T_1)T_2^{k-2}&0\\0&(-T_1)^2T_2^{k-3}\\\vdots&\vdots\\0&(-T_1)^{k-1}\\(-T_1)^{k-1}&0 \end{matrix}\right],$$ then
$$0\to U(-k)^2  \stackrel{\mathfrak C_{\ell}}{\longrightarrow} U(-1)^{\ell}  \stackrel{\mathfrak A_{\ell}}{\longrightarrow} U^{\ell-2}$$
is exact.
\item[\rm(2)] If $\ell$ is the odd integer $2k+1$ and $$\mathfrak C_{\ell}=\left[\begin{matrix} 0&T_2^{k}\\T_2^{k-1}&0\\0&(-T_1)T_2^{k-1}\\(-T_1)T_2^{k-2}&0\\0&(-T_1)^2T_2^{k-2}\\\vdots&\vdots\\(-T_1)^{k-1}&0\\0&(-T_1)^{k}\end{matrix}\right],$$ then
$$0\to U(-k)\oplus U(-k-1)  \stackrel{\mathfrak C_{\ell}}{\longrightarrow} U(-1)^{\ell}  \stackrel{\mathfrak A_{\ell}}{\longrightarrow} U^{\ell-2}$$
is exact.
\end{itemize}
\end{lemma}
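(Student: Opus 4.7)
The plan is to identify $\mathfrak{A}_\ell$ as the matrix of the composition displayed in \textup{(\ref{comp})} with respect to explicit monomial bases, and then to read off the columns of $\mathfrak{C}_\ell$ as the coordinate expansions, in that basis, of the abstract kernel generators produced by Lemma \ref{pre-J08'}.

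First I would verify that, relative to the basis $t^{\ell-1}, st^{\ell-2}, \ldots, s^{\ell-1}$ of $\Sym_{\ell-1}V$ and the basis $s^2 t^{\ell-1}, s^3 t^{\ell-2}, \ldots, s^{\ell-1} t^2$ of the target quotient, multiplication by $\xi = T_2 t^2 + T_1 s^2$ is represented by precisely $\mathfrak{A}_\ell$. Indeed, the image of $s^j t^{\ell-1-j}$ is $T_1 s^{j+2} t^{\ell-1-j} + T_2 s^j t^{\ell+1-j}$, and exactly those monomials whose $s$-exponent lies outside $\{0,1,\ell,\ell+1\}$ survive in the quotient; tracking the two nonzero summands for each $j$ reproduces the pattern of $T_1$'s and $T_2$'s separated by single zeros that defines $\mathfrak{A}_\ell$. (The analogous check already appeared implicitly in the setup preceding Lemma \ref{pre-J08'}.)

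Given this identification, the kernel of $\mathfrak{A}_\ell$ is described by Lemma \ref{pre-J08'}. In the even case $\ell = 2k$ the generators are $s\kappa_{k-1}$ and $t\kappa_{k-1}$; by \textup{(\ref{ker-elt})},
$$
s\kappa_{k-1} = \sum_{b=0}^{k-1} T_2^{k-1-b}(-T_1)^b\, s^{2b+1} t^{2(k-1-b)},
$$
which only hits odd positions in the chosen basis, while $t\kappa_{k-1}$ only hits even positions. Reading off these coefficients position by position recovers exactly the two columns of the stated $\mathfrak{C}_\ell$; both generators have $T$-degree $k-1$, matching the twist $U(-k)$ into $U(-1)^\ell$. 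In the odd case $\ell = 2k+1$, the same expansion applied to $st\kappa_{k-1}$ (of $T$-degree $k-1$) and $\kappa_k$ (of $T$-degree $k$) produces the two columns of the stated $\mathfrak{C}_\ell$ and yields the twists $U(-k)$ and $U(-k-1)$.

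The exactness of the sequences in (1) and (2) is then simply Lemma \ref{pre-J08'} transcribed into matrix form. As a self-contained alternative, one can invoke the Buchsbaum--Eisenbud criterion: $\mathfrak{A}_\ell \mathfrak{C}_\ell = 0$ follows from the identity $\xi\kappa_\alpha = (T_2 t^2)^{\alpha+1} + (-T_1 s^2)^{\alpha+1}$ used in the proof of Lemma \ref{pre-J08'}, while the $2\times 2$ minor from the top two rows of $\mathfrak{C}_\ell$ is a unit multiple of a power of $T_2$ and the minor from the bottom two rows is a unit multiple of a power of $T_1$, so the ideal of maximal minors has grade two. I do not expect any real obstacle here; the work is entirely a matter of matching exponents of $s$ and $t$ to positions in the monomial basis and keeping careful track of parities.
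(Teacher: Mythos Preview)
Your proposal is correct and follows essentially the same approach as the paper: the paper's entire proof is the one sentence ``The present Lemma is a re-statement of Lemma~\ref{pre-J08'},'' and you have simply made explicit the coordinate translation (matching the basis monomials of $\Sym_{\ell-1}V$ to rows and the elements $\kappa_\alpha$ to columns) that this re-statement entails. Your optional Buchsbaum--Eisenbud check is the same one the paper records inside the proof of Lemma~\ref{pre-J08'} itself.
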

\begin{proof} 
The present Lemma is a restatement of Lemma \ref{pre-J08'}.
\QED\end{proof}

\medskip

We are now ready to prove the main result of this section. Adopt Data \ref{data4} with $k$ a field closed under taking square roots. According to Remark~\ref{R5.2}, we may assume that the parametrization $\mathbb P_k^1 \twoheadrightarrow \mathcal C$  is birational. We know from Corollary \ref{B-mod} -- see also Tables \ref{tbl2} and \ref{tbl3} -- that 
there exist bi-homogeneous elements $g_{(i,j)}\in \mathcal A_{(i,j)}$ such that the minimal generating set of the $B$-module $\mathcal A$ is given by
\begin{equation}\label{gens}
\{g_{(0,d_1+d_2)}\}\cup\left\{g_{\left(i,\frac{d_2+2-i}2\right)}\left\vert \begin{array}{l}1\le i\le d_2-2\\\text{and  $d_2-i$ is even}\end{array}\right.\right\}\cup \begin{cases} \left\{g_{\left(1,\frac{d_2+3}2\right)}\right\}&\text{if $d_2$ is odd}\vspace{5pt}\\ \left\{g_{\left(1,\frac{d_2+2}2\right)},g'_{\left(1,\frac{d_2+2}2\right)}\right\}&\text{if $d_2$ is even}.\end{cases}\end{equation}
The element $g_{(0,d_1+d_2)}$ is the resultant of $g_1$ and $g_2$, when these polynomials are viewed as homogeneous forms in $S[x,y]$ of degree $d_1$ and $d_2$, respectively. In Theorem \ref{goal5} we record explicit formulas for the rest of the $g_{(i,j)}$ from (\ref{gens}). According to Observation \ref{modify}, we may assume that the first column of $\varphi$ has one of two forms. The linear form $c_{w,2}$ of $S$ is defined in (\ref{g}) by the equation
$$g_2=\sum_{w=0}^{d_2}c_{w,2}x^wy^{d_2-w}.$$
The  results of \cite{BD14} and Theorem~\ref{goal5} were obtained more or less simultaneously; indeed,  \cite{BD14} and this article were posted on the arXiv three days apart.
\begin{theorem}\label{goal5} Adopt Data \ref{data4} with $k$ a field closed under taking square roots. Without loss of generality, the parametrization $\mathbb P_k^1 \twoheadrightarrow \mathcal C$  is birational. 
 We give explicit formulas for the elements  of {\rm(\ref{gens})}. The element $g_{i,j}$ or $g'_{i,j}$ is in $\mathcal A_{i,j}$ and the   set $\{g_{i,j},g_{i,j}'\}$ of {\rm(\ref{gens})} is a  bi-homogeneous minimal generating set for the $B$-module $\mathcal A$.

\begin{itemize}
\item[(1)]  Assume  that the first column of $\varphi$ is $[x^2+y^2,xy,0]^{\rm T}$.
\begin{itemize}
\item[(a)] If  $1\le i\le d_2-4$ and
$d_2-i$ is even, then 
$$
g_{\left(i,\frac{d_2+2-i}2\right)}= \begin{cases} 
\sum\limits_{w=0}^i  T_2c_{w,2}x^{w} y^{i-w} m_{1} 
 + \sum\limits_{w=1}^i  
 T_1c_{w-1,2}
x^{w} y^{i-w}m_{1}+\sum\limits_{w=0}^i    
T_1c_{w,2}
x^{w} y^{i-w}m_{2} \\
+ T_1  
x^{i}\left(\sum\limits_{\beta=2}^{\frac{d_2-i}2}c_{d_2+2-\beta,2}m_\beta-\sum\limits_{\beta=1}^{\frac{d_2-i}2}c_{\beta+i,2}m_\beta\right)\\
+(T_2x^{i}+T_1x^{i-1} y)\sum\limits_{\beta=1}^{\frac{d_2-i}2}(c_{d_2+1-\beta,2}-c_{\beta+i-1,2})m_\beta\, ,
\end{cases}$$ where $m_1,\dots,m_{\frac{d_2-i}2}$ are the signed maximal minors of the matrix $B_{\frac{d_2-i}2}$ of {\rm(\ref{Bk})}.
\item[(a$'$)] If $i=d_2-2$,  then
$$g_{\left(d_2-2,2\right)}= \begin{cases}\sum\limits_{w=0}^{d_2-3}  
 T_2c_{w,2}
x^{w} y^{d_2-2-w} + \sum\limits_{w=1}^{d_2-2} 
 T_1c_{w-1,2}
x^{w} y^{d_2-2-w} 
-  
T_1c_{d_2-1,2}   
x^{d_2-2}
\\   
-  
T_1c_{d_2-2,2}   
x^{d_2-3} y+T_2c_{d_2,2}   
x^{d_2-2} 
+  
T_1c_{d_2,2}   
x^{d_2-3} y\, . \end{cases}$$

\item [(b)] If $i=1$ and $d_2$ is odd, then 
$$g_{\left(1,\frac{d_2+3}2\right)}=\begin{cases}  
+\underline{\chi}(d_2=3)   
(T_1c_{0,2}y+T_1c_{1,2}x)
M_{1}\\
-\underline{\chi}(5\le d_2)
\left( T_2c_{0,2}y + T_2c_{1,2}x 
+   
 T_1c_{0,2}
x\right) M_{\frac{d_2-3}2}\\
-\underline{\chi}(7\le d_2)       
(T_1c_{0,2}y+T_1c_{1,2}x)
M_{\frac{d_2-5}2}-\sum\limits_{\beta=1}^{\frac{d_2-1}2} T_1c_{\frac{d_2+1}2+\beta,2}xM_{\beta}\\
+\sum\limits_{\beta=1}^{\frac{d_2-3}2} (T_1c_{\frac{d_2+1}2-\beta,2}x+T_2c_{\frac{d_2-1}2-\beta,2}x+T_1c_{\frac{d_2-1}2-\beta,2} y)M_{\beta}
\\-\sum\limits_{\beta=1}^{\frac{d_2+1}2}(T_2c_{\frac{d_2-1}2+\beta,2}x+T_1c_{\frac{d_2-1}2+\beta,2}  y)M_{\beta}\, ,
\end{cases}$$
where $M_1,\dots,M_{\frac{d_2+1}2}$ are the signed maximal order minors of the matrix $B_{\frac{d_2+1}2}^\dagger$,
and the matrix $B_{\frac{d_2+1}2}$ and the operation ``$\dagger$'' are defined at {\rm(\ref{Bk})} and {\rm(\ref{dagger})}, respectively.

\item [(c)] If $i=1$ and $d_2$ is even, then 
$$g_{\left(1,\frac{d_2+2}2\right)}=\begin{cases}
    (T_2c_{0,2} y +T_2c_{1,2}x +  T_1c_{0,2}x)m_{1}+  (T_1c_{0,2} y+T_1c_{1,2}x)m_{2}\\
-\sum\limits_{\beta=1}^{\frac{d_2}2} (T_1c_{\beta+1,2}x +T_2c_{\beta,2}x +T_1c_{\beta,2}y)m_{\beta} 
\\
+ \sum\limits_{\beta=3}^{\frac{d_2}2}  T_1c_{d_2+3-\beta,2}x m_{\beta} 
+\sum\limits_{\beta=2}^{\frac{d_2}2} (T_2c_{d_2+2-\beta,2}x +T_1c_{d_2+2-\beta,2} y)m_{\beta}
\end{cases}
$$
 and 
$$\hspace{1.2cm} g'_{\left(1,\frac{d_2+2}2\right)} 
=  \begin{cases}
 -\left(T_2c_{0,2}y+T_2c_{1,2}x +T_1c_{0,2}x\right) m_{2} 
-\underline{\chi}(6\le d_2)  (T_1c_{0,2}y+T_1c_{1,2}x) m_{3}\\
+\sum\limits_{\beta=2}^{\frac{d_2}2}(T_1c_{\beta,2}x +T_2c_{\beta-1,2}x +T_1c_{\beta-1,2} y-T_1c_{d_2-\beta+2,2}x )m_{\beta}\\
 - \sum\limits_{\beta=1}^{\frac{d_2}2}(T_2c_{d_2+1-\beta,2}x +T_1c_{d_2+1-\beta,2} y)m_{\beta}\, ,\end{cases}$$ 
where $m_1,\dots,m_{\frac{d_2}2}$ are the signed maximal order minors of the matrix $B_{\frac{d_2}2}$ of {\rm(\ref{Bk})}.
\end{itemize}

\item[(2)] Assume  that the first column of $\varphi$ is $[y^2,x^2,0]^{\rm T}$.
\begin{itemize}
\item[(a)] If  $1\le i\le d_2-2$ and
$d_2-i$ is even, then 
$$g_{\left(i,\frac{d_2+2-i}2\right)}= \sum\limits_{w=0}^i  c_{w,2}x^{w} y^{i-w}T_2^{\frac{d_2-i}2}+
\sum\limits_{\lambda=1}^{\frac{d_2-i}2}     
(c_{i+2\lambda-1,2}y+  c_{i+2\lambda,2}x)x^{i-1} (-T_1)^{\lambda}T_2^{\frac{d_2-i}2-\lambda}.   $$ 
\item [(b)] If $i=1$ and $d_2$ is odd, then 
$$g_{\left(1,\frac{d_2+3}2\right)}=  \sum\limits_{\lambda=1}^{\frac{d_2+1}2}   
c_{2\lambda-1,2}   
  y (-T_1)^{\lambda}T_2^{\frac{d_2+1}2-\lambda} 
+\sum\limits_{\lambda=0}^{\frac{d_2-1}2}   
c_{2\lambda,2}   
x  (-T_1)^{\lambda}T_2^{\frac{d_2+1}2-\lambda}.
$$

\item [(c)] If $i=1$ and $d_2$ is even, then 
$$\begin{array}{rcl}g_{\left(1,\frac{d_2+2}2\right)}&=& 
\sum\limits_{\lambda=1}^{\frac{d_2}2}  
c_{2\lambda-1,2}   
  y (-T_1)^{\lambda}T_2^{\frac{d_2}2-\lambda} 
+\sum\limits_{\lambda=0}^{\frac{d_2}2}    
c_{2\lambda,2}   
x(-T_1)^{\lambda}T_2^{\frac{d_2}2-\lambda}\quad\text{and}\\
g'_{\left(1,\frac{d_2+2}2\right)}& 
=&  \sum\limits_{\lambda=0}^{\frac{d_2}2}   c_{2\lambda,2}    y (-T_1)^{\lambda}T_2^{\frac{d_2}2-\lambda} 
+\sum\limits_{\lambda=0}^{\frac{d_2-2}2}   
c_{2\lambda+1,2}   
x (-T_1)^{\lambda}T_2^{\frac{d_2}2-\lambda}. \end{array}$$ 
\end{itemize}
\end{itemize}

\end{theorem}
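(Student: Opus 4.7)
The plan is to combine the Morley-form machinery from Section~\ref{MorleyForms}, specifically Corollary~\ref{qchi}, with the explicit syzygies recorded in Lemmas~\ref{J08} and~\ref{J08'}. By Observation~\ref{modify}, one may harmlessly assume the first column of $\varphi$ is $[x^2+y^2,xy,0]^{\rm T}$ or $[y^2,x^2,0]^{\rm T}$, and then Corollary~\ref{A-ell} identifies the presenting matrix $\Upsilon_{d_2-i-1,1}^{\rm T}$ with either $A_{d_2-i+1}$ or $\mathfrak{A}_{d_2-i+1}$ from Definition~\ref{next-def}. The target bi-degrees of minimal $B$-module generators of $\mathcal{A}_{\geq 1}$ are already pinned down by Corollary~\ref{B-mod} (and visualized in Tables~\ref{tbl2} and~\ref{tbl3}); at each of the relevant ``exterior corner'' positions $(i,j)$ of Table~\ref{tbl1}, we must exhibit the requisite number of $S$-module generators of $\mathcal{A}_i$ lying in $T$-degree $j$.

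The production of these generators is purely mechanical once the ingredients are assembled. Each column $\chi$ of the matrix $C$ in Lemma~\ref{J08} (case~1) or $\mathfrak{C}_{d_2-i+1}$ in Lemma~\ref{J08'} (case~2) lies in the kernel of $\Upsilon_{d_2-i-1,1}^{\rm T}$, and Corollary~\ref{qchi} then produces the element
\[
\nu_1(u_\chi) \;=\; [\,q_{0,d_2-i}\;\cdots\;q_{d_2-i,0}\,]\cdot \chi \;\in\; \mathcal{A}_i.
\]
The degrees of those columns, listed in Lemmas~\ref{J08} and~\ref{J08'}, exactly match the $T$-degree coordinates of the corner points, so these elements land in the required bi-degrees. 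Substituting the closed-form expression of Corollary~\ref{extract2*} for each $q_{\beta,d_2-i-\beta}$ and expanding the inner product yields the formulas recorded in the theorem. The minimality and completeness of the generating set then follow at once: the bi-degree count coincides with that predicted by Corollary~\ref{B-mod}; the produced elements are nonzero because $\nu_1$ is an isomorphism (Theorem~\ref{Morley}); and distinct linearly independent kernel elements $\chi$ are sent to $S$-linearly independent elements of $\mathcal{A}_i$, again by the isomorphism $\nu_1$.

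The main obstacle is bookkeeping rather than conceptual. One must split into six cases, organized by the two canonical forms of the first column of $\varphi$, the parity of $d_2-i$, and the boundary value $i=1$, where one or two generators appear according to the parity of $d_2$. In each case the steps are: (i) read off from Lemma~\ref{J08} or~\ref{J08'} the kernel element $\chi$ of the correct $T$-degree, whose coordinates are the signed maximal minors $m_\beta$ of $B_{(d_2-i)/2}$ and $M_\beta$ of $B_{(d_2+1)/2}^{\dagger}$, or the monomials $(-T_1)^{\lambda} T_2^{\mu}$; (ii) expand the product $[q_{0,d_2-i}\;\cdots\;q_{d_2-i,0}]\cdot \chi$ using Corollary~\ref{extract2*}, carefully tracking the $\underline{\chi}$-indicators that toggle which $c_{*,2}$ coefficients occur; and (iii) collect terms. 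The $[y^2,x^2,0]^{\rm T}$ case of part~(2) is substantially cleaner because each column of $\mathfrak{C}_{d_2-i+1}$ has only every other entry nonzero, collapsing the double sum into a single sum over $\lambda$; the $[x^2+y^2,xy,0]^{\rm T}$ case of part~(1) is the one where the combinatorics of the indicator functions and of the $B_k$-versus-$B_{k+1}^{\dagger}$-minors genuinely needs to be handled separately in each of the subcases (a), (a$'$), (b), (c).
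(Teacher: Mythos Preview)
Your proposal is correct and follows essentially the same approach as the paper: one invokes Corollary~\ref{qchi} to produce $\nu_1(u_\chi)=[q_{0,d_2-i}\cdots q_{d_2-i,0}]\cdot\chi$ from each kernel column $\chi$ supplied by Lemma~\ref{J08} or~\ref{J08'}, then substitutes the closed form of Corollary~\ref{extract2*} and collects terms, with Corollary~\ref{B-mod} guaranteeing that the resulting elements constitute a minimal $B$-generating set. The paper organizes the computation in the same way, working through (1.a), (1.a$'$), (1.b), (1.c), (2.a), (2.b), (2.c) in turn and recording the particular $\chi$ used in each case; your high-level outline matches this exactly.
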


\begin{proof} We first prove (1.a) and (1.a$'$). Apply the technique of Corollary \ref{qchi}. The matrix ${\Upsilon^{\text{\rm T}}_{d_2-i-1,1}}$ of (\ref{rcl}) is given in 
(\ref{first-A}) and this matrix is called $A_{\ell}$, with $\ell=d_2-i+1$, in Definition \ref{next-def}. The kernel of $A_{\ell}$ is calculated in part (2) of Lemma \ref{J08} because the index $\ell=d_2-i+1$ is odd. This kernel is free of rank two. The homogeneous minimal generating set for the kernel of $A_{\ell}$ has one generator in each of two consecutive degrees. Let $\chi=[\chi_0,\dots,\chi_{\ell-1}]^{\rm T}$ be the generator with the smaller degree. The technique of Corollary \ref{qchi} and the formula (\ref{gens}) 
then give
\begin{equation}\label{dot-product}g_{\left(i,\frac{d_2+2-i}2\right)}=\sum_{\beta=0}^{\ell-1} q_{\beta,d_2-i-\beta}\chi_{\beta} \, .\end{equation} An explicit formula for 
$q_{\beta,d_2-i-\beta}$ is given in Corollary \ref{extract2*} and 
\begin{equation}\label{local-chi}\chi=\begin{cases}
[m_1,\dots,m_k,0,-m_k,\dots,-m_1]^{\text{\rm T}}&\text{if $4\le d_2-i$}\\
[1,0,-1]^{\text{\rm T}}&\text{if $2= d_2-i$}
 \end{cases}\end{equation}is given in part (2) of Lemma \ref{J08}, 
  where $k=\frac{\ell-1}2=\frac{d_2-i}2$, $B_k$ is the matrix described in (\ref{Bk}), and $m_i$ is the $i^{\text{th}}$ signed maximal order minor of $B_k$. 

If $i=d_2-2$, then (\ref{dot-product}) yields $g_{\left(d_2-2, 2\right)}=q_{0,2}-q_{2,0}$, 
with $$ q_{0,2}= 
\begin{cases}\sum\limits_{w=0}^{d_2-3}  c_{1,1}c_{w,2}x^{w} y^{d_2-2-w} 
+ \sum\limits_{w=1}^{d_2-2}  
 c_{2,1}c_{w-1,2}
x^{w} y^{d_2-2-w} \\
-c_{0,1}c_{d_2-1,2}x^{d_2-2}-c_{0,1}c_{d_2-2,2}x^{d_2-3} y\end{cases}
 $$
and $q_{2,0}= 
-c_{1,1}c_{d_2,2}x^{d_2-2}-c_{0,1}c_{d_2,2}x^{d_2-3} y$. The polynomial $g_1=c_{0,1}y^2+c_{1,1}xy+c_{2,1}x^2$ is equal to 
$T_1(x^2+y^2)+T_2xy$; so
\begin{equation}\label{local-c}c_{0,1}=T_1,\quad c_{1,1}=T_2,\quad\text{and}\quad c_{2,1}=T_1,\end{equation} and the computation of (1.a$'$) is complete. 

If $4\le d_2-i$, then (\ref{local-chi}) gives $$\chi_{\beta}=\begin{cases} m_{\beta+1}&\text{if $0\le \beta\le \frac{d_2-i}2-1$}\\  
0&\text{if $ \beta=\frac{d_2-i}2$}\\
-m_{d_2-i+1-\beta}&\text{if $\frac{d_2-i}2+1\le \beta\le  d_2-i$} \, ;\end{cases}$$ and therefore, 
  $g_{\left(i,\frac{d_2+2-i}2\right)}=\theta_1+\theta_2$, with $$\theta_1=\sum\limits_{\beta=0}^{\frac{d_2-i}2-1}q_{\beta,d_2-i-\beta}m_{\beta+1}\quad\text{and}\quad \theta_2=-\sum\limits_{\beta=\frac{d_2-i}2+1}^{d_2-i}q_{\beta,d_2-i-\beta}m_{d_2-i+1-\beta} \, .$$ Use Corollary \ref{extract2*} to calculate 
$$\theta_1=\begin{cases} \sum\limits_{w=0}^i  c_{1,1}c_{w,2}x^{w} y^{i-w} m_{1}
+  \sum\limits_{w=1}^ic_{2,1}c_{w-1,2}x^{w} y^{i-w}m_{1}  
+\sum\limits_{w=0}^i    c_{2,1}c_{w,2}x^{w} y^{i-w}m_{2}\\
-\sum\limits_{\beta=1}^{\frac{d_2-i}2}c_{0,1}c_{i+\beta,2}x^{i}m_{\beta} 
-\sum\limits_{\beta=1}^{\frac{d_2-i}2}c_{1,1}c_{i+\beta-1,2}x^{i}m_{\beta} 
-\sum\limits_{\beta=1}^{\frac{d_2-i}2}c_{0,1}c_{i+\beta-1,2}x^{i-1} ym_{\beta}\end{cases}
$$
and
$$\theta_2=
\sum\limits_{\beta=2}^{\frac{d_2-i}2}c_{0,1}c_{d_2+2-\beta,2}x^{i}m_{\beta} 
+\sum\limits_{\beta= 1}^{\frac{d_2-i}2}c_{1,1}c_{d_2+1-\beta,2}x^{i}m_{\beta}
+\sum\limits_{\beta= 1}^{\frac{d_2-i}2}c_{0,1}c_{d_2+1-\beta,2}x^{i-1} ym_{\beta}.
$$
Use (\ref{local-c}) to complete the proof of (1.a). 

To prove (1.b), we consider the minimal syzygy of degree $k$
$$\chi=\left[\begin{matrix} -M_{k-1}&\dots&-M_1&0&M_1&\dots&M_{k+1}\end{matrix}\right]^{\rm T}$$ of the matrix $A_{d_2}$, where $k=\frac{d_2-1}2$, as given in part (2) of Lemma \ref{J08}. This formulation makes sense and gives 
$$\chi=\left[\begin{matrix}  0&T_1&-T_2 \end{matrix}\right]^{\rm T}$$ when $d_2=3$. In other words, $\chi=[\chi_0,\dots,\chi_{d_2-1}]^{\rm T}$, with $$\chi_{\beta}=\begin{cases}
-M_{\frac{d_2-3}2-\beta}&\text{if $0\le \beta\le \frac{d_2-5}2 $}\\
0&\text{if $\beta=\frac{d_2-3}2$}\\
M_{\beta+1-\frac{d_2-1}2}&\text{if $\frac{d_2-1}2\le \beta\le d_2-1$}.
\end{cases}$$The techniques of Corollary \ref{qchi} give 
$$g_{\left(1,\frac{d_2+3}2\right)}=\sum_{\beta=0}^{\ell-1} q_{\beta,d_2-1-\beta}\chi_{\beta}=-\sum_{\beta=0}^{\frac{d_2-1}2-2} q_{\beta,d_2-1-\beta}M_{\frac{d_2-3}2-\beta}+\sum_{\beta=\frac{d_2-1}2}^{d_2-1} q_{\beta,d_2-1-\beta}M_{\beta-\frac{d_2-1}2+1}.$$We apply Corollary \ref{extract2*} and (\ref{local-c}) as we simplify this expression.

The computation of (1.c) proceeds in the same manner. One begins with the relations
$$\chi=\left[\begin{matrix} m_1&\dots& m_{\frac{d_2}2}&0&-m_{\frac{d_2}2}&\dots&-m_2\end{matrix}\right]^{\rm T}
\ \ \text{and}\ \  \chi'=\left[\begin{matrix} -m_2&\dots&-m_{\frac{d_2}2}&0&m_{\frac{d_2}2}&\dots&m_1\end{matrix}\right]^{\rm T}$$
on the matrix $A_{d_2}$, as given in  part (1) of Lemma \ref{J08}, where the matrix $B_{\frac{d_2}2}$ is defined in (\ref{Bk}) and $m_1,\dots,m_{d_2}$ are the maximal order minors of $B_{\frac{d_2}2}$. The relations $\chi$ and $\chi'$ are  used to produce $g_{\left(1,\frac{d_2+2}2\right)}$ and $g_{\left(1,\frac{d_2+2}2\right)}'$, respectively. 

Now we prove (2.a). Again we use the method of Corollary \ref{qchi}. Now the matrix ${\Upsilon^{\text{\rm T}}_{d_2-i-1,1}}$ of (\ref{rcl}) is given in 
(\ref{second-A}) and this matrix is called $\mathfrak A_{\ell}$, with $\ell=d_2-i+1$, in Definition \ref{next-def}. The relation
$$\chi=\left[\begin{matrix}0\\T_2^{\alpha}\\0\\(-T_1)T_2^{\alpha-1}\\\vdots\\(-T_1)^{\alpha}\\0 \end{matrix}\right],$$ with $\alpha=\frac{d_2-i-2}2$, is read from part (2) of Lemma \ref{J08'}. We see that $$\chi_\beta=\begin{cases} 0&\text{if $\beta$ is even}\\
(-T_1)^{\lambda}T_2^{\frac{d_2-i-2}2-\lambda}&\text{if $\beta=2\lambda+1$ and $0\le \lambda\le \frac{d_2-i-2}2$}\, .\end{cases}$$ 
Use Corollary \ref{qchi} and Corollary \ref{extract2*} to see that $g_{\left(i, \frac{d_2+2-i}2\right)}$ is equal to 
$$\begin{array}{rl}&\sum\limits_{\lambda=0}^{\frac{d_2-i}2-1} q_{2\lambda+1,d_2-i-(2\lambda+1)}(-T_1)^{\lambda}T_2^{\frac{d_2-i}2-\lambda-1}\\
=&
\begin{cases}\sum\limits_{w=0}^i c_{2,1}c_{w,2}x^{w} y^{i-w}T_2^{\frac{d_2-i}2-1}\\
+\sum\limits_{\lambda=0}^{\frac{d_2-i}2-1} \left(-c_{0,1}c_{i+2\lambda+2,2}x^{i}-c_{1,1}c_{i+2\lambda+1,2}x^{i}-c_{0,1}c_{i+2\lambda+1,2}x^{i-1} y\right)(-T_1)^{\lambda}T_2^{\frac{d_2-i}2-\lambda-1}.\end{cases}\end{array}
$$
The polynomial $g_1=c_{0,1}y^2+c_{1,1}xy+c_{2,1}x^2$ is equal to 
$T_1y^2+T_2x^2$; so
\begin{equation*}c_{0,1}=T_1,\quad c_{1,1}=0,\quad\text{and}\quad c_{2,1}=T_2\, ,\end{equation*} and the computation of (2.a) is complete. 

The computations of (2.b) and (2.c) proceed  in the same manner. One uses
$$\chi=\left[\begin{matrix}T_2^{\frac{d_2-1}2}\\0\\\vdots\\0\\(-T_1)^{\frac{d_2-1}2} \end{matrix}\right], \quad \chi=\left[\begin{matrix}  T_2^{\frac{d_2}2-1}\\0\\\vdots\\0\\(-T_1)^{\frac{d_2}2-1}\\0\end{matrix}\right], \quad \text{and}\quad 
\chi=\left[\begin{matrix}0\\T_2^{\frac{d_2}2-1}\\0\\\vdots\\0\\(-T_1)^{\frac{d_2}2-1}\end{matrix}\right]$$ to compute 
$g_{\left(1,\frac{d_2+3}2\right)}$ (when $d_2$ is odd), and $g_{\left(1,\frac{d_2+2}2\right)}$ and 
$g'_{\left(1,\frac{d_2+2}2\right)}$ (when $d_2$ is even), respectively.
\QED
\end{proof}

\section{The case of $d_1=d_2$.}\label{d1=d2}
The $S$-module structure of $\mathcal A_{\ge d_2-1}$ is completely described in Corollary \ref{XXX} for all choices of $d_1\le d_2$ in Data \ref{data1}. If $d_1<d_2$ and the first column of $\varphi$ contains a generalized zero, then the $S$-module structure of $\mathcal A_{\ge d_1-1}$ is completely described in Theorem \ref{claudia}; see also Table \ref{table}. In Theorem \ref{andy} we assume that $d_1=d_2$ and we describe
$\mathcal A_{d_1-2}$; hence, in this case, the $S$-module structure of $\mathcal A_{\ge d_1-2}$ is completely described by combining Theorem \ref{andy} and Corollary \ref{XXX}. The geometric significance of Theorem \ref{andy} is explained in  Remark \ref{signif}. A preliminary version of Theorem \ref{andy} initiated the investigation    that culminated in \cite{CKPU}. Most of the calculations that are used in the proof of Theorem \ref{andy} have already been incorporated in \cite{CKPU}. The geometric applications of these calculations are emphasized in \cite{CKPU}. In the present work we focus on the application of these calculations to Rees algebras.

\begin{data}\label{33data} Adopt Data {\rm \ref{data1}} with $d_1=d_2$. Let $$C= \left[\begin{matrix} c_{0,1}&c_{0,2}\\\vdots&\vdots \\c_{d_1,1}&c_{d_1,2}\end{matrix}\right]$$ be the matrix $[\Upsilon_{1,1}|\Upsilon_{1,2}]$ of Definition \ref{upsilon}. Notice that
\begin{equation}\label{3.11} \left[\begin{matrix}T_1&T_2&T_3\end{matrix}\right]\varphi = \left[\begin{matrix} y^{d_1}&xy^{d_1-1}&\cdots&x^{d_1}\end{matrix}\right]C.\end{equation}  \end{data}

\begin{theorem} \label{andy}
If Data {\rm \ref{33data}} is adopted, then the following statements hold.
\begin{itemize}
\item[{\rm (1)}] There are eight possible values for the pair of integers $(\mu(I_1(\varphi)),\mu(I_2(C)))$. Indeed,  the 
following chart gives the possible values for $(\mu(I_1(\varphi)),\mu(I_2(C)))$ as a function of $d_1${\rm:}

\bigskip
\begin{center}
\begin{tabular}{c|l}
$d_1$&\phantom{$(3,3)$ o } possible values for $(\mu(I_1(\varphi)),\mu(I_2(C)))$\\ \hline
 $5$ or more  &$(6,6)$, $(5,6)$, $(5,5)$, $(4,6)$, $(4,5)$, $(4,4)$, $(3,3)$, or $(2,1)$\\
$4$&\phantom{$(6,6)$, }$(5,6)$, $(5,5)$, $(4,6)$, $(4,5)$, $(4,4)$, $(3,3)$, or $(2,1)$\\
$3$&\phantom{$(6,6)$, $(5,6)$, $(5,5)$, }$(4,6)$, $(4,5)$, $(4,4)$, $(3,3)$, or $(2,1)$\\
$2$&\phantom{$(6,6)$, $(5,6)$, $(5,5)$, $(4,6)$, $(4,5)$, $(4,4)$, }$(3,3)$, or $(2,1)$\\
$1$&\phantom{$(6,6)$, $(5,6)$, $(5,5)$, $(4,6)$, $(4,5)$, $(4,4)$, $(3,3)$, or }$(2,1)\, $.\\
\end{tabular}
\end{center}
 
\item[{\rm (2)}] The $S$-module $\mathcal A_{d_1-2}$ is  resolved by
$$\begin{array} {ccccllc}
 0&\to& S(-4)^2&\to& S(-2)^{d_1-5}\oplus S(-3)^6&\text{if $(\mu(I_1(\varphi)),\mu(I_2(C)))=(6,6)$}\\
0&\to&
S(-5) &\to  &
S(-2)^{d_1-4}
 \oplus
S(-3)^3
\oplus S(-4)
&\text{if $(\mu(I_1(\varphi)),\mu(I_2(C)))=(5,6)$}\\
0&\to&
S(-4)& \to&
S(-2)^{d_1-4}
 \oplus
S(-3)^4&\text{if $(\mu(I_1(\varphi)),\mu(I_2(C)))=(5,5)$}\\
0&\to &
S(-5)^2& \to&
S(-2)^{d_1-3}
  \phantom{\oplus
S(-3)^3}\oplus
S(-4)^4&\text{if $(\mu(I_1(\varphi)),\mu(I_2(C)))=(4,6)$}\\
0&\to &
S(-5)& \to&
S(-2)^{d_1-3}
 \oplus
S(-3)\oplus S(-4)^2&\text{if $(\mu(I_1(\varphi)),\mu(I_2(C)))=(4,5)$}\\
&&0&\to &
S(-2)^{d_1-3}
 \oplus
S(-3)^2 &\text{if $(\mu(I_1(\varphi)),\mu(I_2(C)))=(4,4)$}\\
&&0&\to &
S(-2)^{d_1-2}
\phantom{\oplus
S(-3)^3} \oplus
S(-4)
&\text{if $(\mu(I_1(\varphi)),\mu(I_2(C)))=(3,3)$}\\
&&0&\to &
S(-2)^{d_1-1}
&\text{if $(\mu(I_1(\varphi)),\mu(I_2(C)))=(2,1)\, $}.\end{array}$$
\item[{\rm (3)}] The $S$-module $\mathcal A_{d_1-2}$ is free if and only if $\mu(I_2(C))\le 4$.
\end{itemize}
\end{theorem}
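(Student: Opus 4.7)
The plan is to use the exact sequence furnished by Theorem \ref{chart}(1), specialized to $i=d_1-2$ with $d_1=d_2$,
$$0 \to \mathcal A_{d_1-2} \to S(-2)^{d_1+1} \xrightarrow{C^{\mathrm{T}}} S(-1)^2,$$
which identifies $\mathcal A_{d_1-2}$ as the second syzygy module of $M := \operatorname{coker}(C^{\mathrm{T}})$. Thus a minimal $S$-free resolution of $M$ yields, after the appropriate shift, the minimal $S$-free resolution of $\mathcal A_{d_1-2}$. The relevant discrete invariants of $C$ are $\mu(I_1(C))$ and $\mu(I_2(C))$; one observes, by an elementary tensor-rank argument using the coefficient vectors of the entries of $\varphi$ and of $C$, that $\mu(I_1(\varphi))$ equals the $k$-dimension of the span of the $d_1+1$ rows of $C$ viewed as vectors in $S_1^2$, so that $(d_1+1) - \mu(I_1(\varphi))$ is precisely the number of $k$-linear dependencies among those rows.

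For part (1), I would enumerate the possible pairs $(\mu(I_1(\varphi)), \mu(I_2(C)))$ by exhaustive normal-form analysis. Invertible row operations on $\varphi$, linear changes of variable in $R=k[x,y]$, and linear changes of variable in $S=k[T_1,T_2,T_3]$ all preserve these invariants, and can be used to put $\varphi$ into a convenient canonical form. The a priori bounds $\mu(I_1(\varphi)) \le \min\{6, d_1+1\}$ and $\mu(I_2(C)) \le \dim_k S_2 = 6$, combined with the standing constraint $\operatorname{ht} I_2(\varphi) = 2$ (which rules out various degenerate configurations), leave only the eight listed pairs; the extra dimension constraint $\mu(I_1(\varphi)) \le d_1 + 1$ accounts for the further exclusions when $d_1$ is small.

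For part (2), I would run through each of the eight cases using the normal form established in (1) and compute a minimal free resolution of $M = \operatorname{coker}(C^{\mathrm{T}})$ directly. The generators of $\mathcal A_{d_1-2}$ of bi-degree $(d_1-2, 2)$ are precisely the $(d_1+1) - \mu(I_1(\varphi))$ linear relations among the rows of $C$; the generators of higher $T$-degree ($3$, $4$, or $5$) arise from the $2\times 2$ minors populating $I_2(C)$, obtained via Buchsbaum--Rim or Koszul-style complexes applied to the normal form. The leftmost term of the resolution, when nonzero, is then identified as the first syzygy of these generators. Part (3) is then an immediate consequence of part (2): inspection of the eight resolutions shows that $\mathcal A_{d_1-2}$ is $S$-free precisely in the cases $(4,4)$, $(3,3)$, and $(2,1)$, which are exactly those with $\mu(I_2(C)) \le 4$.

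The main obstacle is the exhaustive case analysis underlying (1) and (2): both the enumeration of normal forms and the explicit resolution computation case by case. Fortunately, most of these calculations have already been carried out in the companion paper \cite{CKPU}, which grew out of a preliminary version of this very theorem and where the geometric meaning of each case in terms of singularity configurations on or infinitely near $\mathcal C$ is also established. The present proof can therefore proceed by invoking those results and translating them into the Rees-algebra language of the current paper, with the exact sequence above serving as the essential bridge.
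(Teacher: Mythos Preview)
Your proposal is correct and matches the paper's approach: both start from the exact sequence of Theorem~\ref{chart}(1) at $i=d_1-2$ to identify $\mathcal A_{d_1-2}\simeq\ker C^{\mathrm T}$, then defer the normal-form classification of $C$ (equivalently, of $\varphi$ under the $\GL_3(k)\times\GL_2(k)$-action) and the case-by-case resolution computation to the results of \cite{CKPU}, reading off (3) from (2). The paper is somewhat more explicit in that it quotes the eleven canonical matrices $C_\#$ indexed by the Extended Configuration Poset of \cite{CKPU} (several of which share a pair $(\mu(I_1(\varphi)),\mu(I_2(C)))$) and checks each kernel directly, whereas you organize the argument by the eight pairs; but the substance is the same.
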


\begin{proof} We know from Theorem \ref{chart}, part (1), with $i=d_1-2$, that
$$0\to \mathcal A_{d_1-2} \lto S(-2)^{d_1+1} \stackrel{C\,^{\text{\rm T}}}{\longrightarrow}
 S(-1)^2 $$is an exact sequence of homogeneous $S$-module homomorphisms; thus,  $\mathcal A_{d_1-2}\simeq \ker C\,^{\rm T}$. We apply the results of  Section 4 in \cite{CKPU}.

The matrix $\varphi$ is an element of the space of matrices ``$\BalH_d$'' from Definition 4.3 in \cite{CKPU}.
 The group $G=\GL_3(k)\times \GL_2(k)$ acts on $\BalH_d$; and, in Theorem 4.9 of \cite{CKPU}, $\BalH_d$ is decomposed  into $11$ disjoint orbits under the action of $G$: $\BalH_d=\bigcup_{\#\in {\rm ECP}} \DO^{\Bal}_{\#}$. 
These orbits are parameterized by a poset called the Extended Configuartion Poset (ECP).
The value of $(\mu(I_1(\varphi)),\mu(I_2(C)))$, as a function of $\#$ with $\varphi\in \DO^{\Bal}_\#$, is given, in part (2) of Lemma 4.10 in \cite{CKPU}, to be 
$$\begin{array}{cclll}
(\mu(I_1(\varphi)),\mu(I_2(C)))=(6,6)&\iff&\#=(\emptyset,\mu_6)\\
(\mu(I_1(\varphi)),\mu(I_2(C)))=(5,6)&\iff&\#=(\emptyset,\mu_5)\\
(\mu(I_1(\varphi)),\mu(I_2(C)))=(5,5)&\iff&\#=(c,\mu_5)\\
(\mu(I_1(\varphi)),\mu(I_2(C)))=(4,6)&\iff&\#=(\emptyset,\mu_4) \\
(\mu(I_1(\varphi)),\mu(I_2(C)))=(4,5)&\iff&\#=(c,\mu_4)\\
(\mu(I_1(\varphi)),\mu(I_2(C)))=(4,4)&\iff&\#=(c,c),\ &\text{or}\  (c:c)\\
(\mu(I_1(\varphi)),\mu(I_2(C)))=(3,3)&\iff&\#=(c,c,c),\ &\text{or}\ (c:c,c),\ &\text{or}\ (c:c:c)\\
(\mu(I_1(\varphi)),\mu(I_2(C)))=(2,1)&\iff&\#=\mu_2\, .\\
\end{array}$$ We notice that when $d_1$ is small, then it is not possible for $(\mu(I_1(\varphi)),\mu(I_2(C)))$ to take on all of the values listed so far. Indeed, the entries of $\varphi$ are elements of the vector space of homogeneous forms of degree $d_1$ in $2$ variables; consequently, $\mu(I_1(\varphi))\le d_1+1$. On the other hand, this is the only constraint that a small value for $d_1$  imposes  on the pair $(\mu(I_1(\varphi)),\mu(I_2(C)))$, as is shown in Proposition~4.21 of \cite{CKPU}. This completes the proof of (1).

Now we prove (2).
For each $\#$ in ECP, there is a canonical matrix
$C_{\#}$ with the property that if $\varphi'\in \DO_{\#}^{\Bal}$ and $C'$ is the partner of $\varphi'$ in the sense of (\ref{3.11}), then $C_{\#}$ may be obtained from $C'$ by a linear change of variables in $S$, elementary row and column operations, and the suppression of zero rows; furthermore, $I_\ell(C')=I_{\ell}(C_{\#})$, and if
$X_{\#}=\left[\begin{smallmatrix} C_\#\\0\end{smallmatrix}\right]$ is the matrix with the same number of rows as $C'$, then
$\ker C'\,^{\rm T}\simeq \ker X_\#^{\rm T}$. We now record the matrices $C_{\#}$ as given in   \cite[Lemma 4.10, part (1)]{CKPU}:
$$\begin{array}{rcl rcl rcl rcl}  C_{(\emptyset,\mu_6)}&=&\left [\begin{smallmatrix} T_1&0\\T_2&0\\T_3&0\\0&T_1\\0&T_2\\0&T_3\end{smallmatrix}\right ],\ \  & C_{(\emptyset,\mu_5)}&=&\left [\begin{smallmatrix} T_1&T_3\\T_2&0\\T_3&0\\0&T_1\\0&T_2\end{smallmatrix}\right ],\ \
&
C_{(c,\mu_5)}&=&\left [\begin{smallmatrix} T_1&0\\T_2&0\\0&T_1\\0&T_2\\0&T_3\end{smallmatrix}\right ],\ \    &
C_{(\emptyset,\mu_4)}&=&\left [\begin{smallmatrix} T_1&0\\T_2&T_1\\T_3&T_2\\0&T_3\end{smallmatrix}\right ],\ \
\\
C_{(c,\mu_4)}&=&\left [\begin{smallmatrix} T_1&T_2\\T_2&0\\0&T_1\\0&T_3\end{smallmatrix}\right ],\ \  &
C_{c,c}&=&\left [\begin{smallmatrix} T_1&0\\0&T_1\\T_2&T_2\\0&T_3\end{smallmatrix}\right ],\ \  &
 C_{c:c}&=&\left [\begin{smallmatrix} T_1&0\\T_2&T_3\\0&T_1\\0&T_2\end{smallmatrix}\right ],\ \   &
 C_{c,c,c}&=&\left [\begin{smallmatrix} T_1&T_1\\T_2&0\\0&T_3\end{smallmatrix}\right ],\\
C_{c:c,c}&=&\left [\begin{smallmatrix} T_1&0\\T_2&T_3\\0&T_2\end{smallmatrix}\right ],\ \  &
C_{c:c:c}&=&\left [\begin{smallmatrix} T_1&T_2\\T_2&T_3\\0&T_1\end{smallmatrix}\right ],\ \   &
C_{\mu_2}&=&\left [\begin{smallmatrix} T_1&T_2\\T_2&T_3\end{smallmatrix}\right ].
\end{array}$$
One   readily checks that the kernel of $X_{\#}^{\rm T}$ is as claimed for each choice of $\#$. This completes the proof of (2). Assertion (3) follows immediately from (2).
\QED
\end{proof}

\begin{remark}\label{signif} We explain the names of the elements of ECP.
Adopt Data \ref{33data} with   $k$   algebraically closed. Let $\mathcal C_{\varphi}$ be the curve and
$\eta_{\varphi}: \mathbb P^1_k\to \mathcal C_{\varphi}$ be the parameterization
described in Remark \ref{curve}.

\begin{itemize}

\item[{\rm (1)}]  If $\varphi\in \DO_{\mu_2}^{\Bal}$, then $\eta_{\varphi}$ is a birational parameterization of $\mathcal C_{\varphi}$ if and only if $d_1=1$; in this case, $\mathcal C_{\phi}$ is nonsingular.

\item[{\rm (2)}] If $d_1$ is a prime integer, then $\eta_{\varphi}$ is a birational parameterization of $\mathcal C_{\varphi}$ if and only if $\mu(I_1(\varphi))$ is at least $3$; see \cite[0.11]{CKPU}.

\end{itemize}

\noindent For the rest of our remarks, we assume that  $\eta_{\varphi}$ is a birational parameterization of $\mathcal C_{\varphi}$. (If one starts with a non-birational parameterization, then one can always re-parameterize in order to obtain a birational parameterization.)

\begin{itemize}
\item[{\rm (3)}]If $\varphi\in \DO^{\Bal}_\#$, for some $\#\in {\rm ECP}$, and $\eta_{\varphi}$ is a birational parameterization of $\mathcal C_{\varphi}$, then  the number of $c$'s that appear in the name of $\#$ is equal to the number of singularities of multiplicity $d_1$ on, or infinitely near, $\mathcal C$. (In \cite{CKPU}, the degree of $\mathcal C$ is $d=2c$; in the present language, the degree of $\mathcal C$ is $d=2d_1$.)
In particular,
\begin{itemize}
\item[{\rm(a)}] The $S$-module $\mathcal A_{d_1-2}$ is free and isomorphic to $S(-2)^{d_1-2}  \oplus
S(-4)$ if and only if there are exactly $3$ singularities of multiplicity $d_1$ on, or infinitely near, $\mathcal C$.
\item[{\rm(b)}] The $S$-module $\mathcal A_{d_1-2}$ is free and isomorphic to $S(-2)^{d_1-3}
 \oplus
S(-3)^2 $ if and only if there are exactly $2$ singularities of multiplicity $d_1$ on, or infinitely near, $\mathcal C$.
\item[{\rm(c)}] The $S$-module $\mathcal A_{d_1-2}$ is free if and only if there are at least $2$ singularities of multiplicity $d_1$ on, or infinitely near, $\mathcal C$.
\end{itemize}
\item[{\rm (4)}] If $\#$ is an element of ECP, then the punctuation describes the configuration of multiplicity $d_1$ singularities on $\mathcal C$. A colon indicates an infinitely near singularity, a comma indicates a different singularity on the curve, and $\emptyset$ indicates that there are no  multiplicity $d_1$ singularities on $\mathcal C_{\varphi}$.
\end{itemize}
\end{remark}

\begin{remark} Again, we compare our results with the results of \cite{BD}. In \cite{BD} the ambient hypothesis forces $d_2$ to equal either $d_1$ or $d_1+1$ (see \cite[Cor.~4.4]{BD}); and therefore, the hypothesis $d_1=d_2$ of the present section  agrees with
one of the hypotheses of \cite{BD}. The style of answer; however, is completely different. We describe {\bf all} of the possible bi-degrees for all possible   bi-homogeneous minimal generating sets for {\bf exactly one} $\mathcal A_i$; namely, $\mathcal A_{d_1-2}$, and we explain  how these bi-degrees reflect the configuration of singularities on the curve. On the other hand,   \cite{BD}  considers {\bf exactly one} configuration of singularities and for this configuration \cite{BD} gives a {\bf complete set} of minimal bi-homogeneous  generators (notice generators rather than just degrees) for the {\bf entire ideal} $\mathcal A$.   The singularities of the curve of \cite{BD} can be read from \cite[Thm.~3.11]{BD}, together with \cite[Cor.~1.9(1)]{CKPU}. The curve of \cite{BD} has four singularities: all of these singularities are on the curve, three of these singularities have multiplicity $d_1$, and the fourth singularity  has multiplicity $d_1-1$. In the language of the present section, the singularities of the curve of \cite{BD} correspond to the element $\#=(c,c,c)$ of the Extended Configuration Poset and the Data  \ref{33data} for the curves of \cite{BD} satisfies  $(\mu(I_1(\varphi)),\mu(I_2(C))$ is equal to $(3,3)$. \end{remark}

\section{An Application: Sextic curves.}\label{sextic}

The results outlined in the previous  sections suffice to provide significant information about the defining
equations for $\mathcal R$ if $d=d_1 +d_2 \leq 6$, since then $d_1 \leq 2$ (see Section \ref{2=d1}) or $d_1=d_2$ (see Section \ref{d1=d2}). We
focus on the case $d=6$, the case of a sextic curve.
The following data is in effect throughout this section.

\begin{data}\label{data7} Adopt Data \ref{data1} with $d=6$ and $k$ an algebraically closed field. Let $\eta:\mathbb P_k^1\to \mathbb P^2_k$ be the morphism determined by $\varphi$ and  $\mathcal C$ be the rational plane curve 
parameterized by $\eta$,
  as described in Remark \ref{curve}. Assume that $\mathcal C$ has degree $6$, or equivalently, that the morphism $\eta$ is birational onto its image $\mathcal C$. Let $\mathcal J$ be the ideal in $B$ which is the kernel of the composition 
$$B\twoheadrightarrow \Sym(I) \twoheadrightarrow \mathcal R\, ,$$ as described in Data \ref{data1}. 
 \end{data}

As it turns out, there is, essentially, a one-to-one correspondence between the bi-degrees of the
defining equations of $\mathcal R$ on the one hand and the types of the singularities on or
infinitely near the curve $\mathcal C$ on the other hand. Here one says that a singularity is
infinitely near $\mathcal C$
if it is obtained from a singularity on $\mathcal C$ by a sequence of quadratic transformations.
This  correspondence can be justified using the results of \cite{CKPU}. The information in Theorem \ref{sextic-table} about the bi-degrees of the defining equations of $\mathcal R$ is a  compilation of results from many places as described below.

\begin{theorem}\label{sextic-table}Adopt Data {\rm\ref{data7}}. The correspondence between the bi-degrees of the
defining equations of $\mathcal R$   and the types of the singularities on or
infinitely near the curve $\mathcal C$   is summarized in Table {\rm\ref{deg6}}. The first column gives the
possible values of $d_1,d_2$, namely
$1,5$ or $2,4$ or $3,3${\rm;} the second column lists the corresponding bi-degrees of minimal
homogeneous generators of $\mathcal J$ together with the multiplicities by which they appear,
suppressing the obvious bi-degrees $(d_1,1),(d_2,1)$
$($of the equations defining ${\rm Sym}(I)$$)$ and $(0,6)$ $($of the implicit equation of $\mathcal C$$)${\rm;} and
   the third column 
gives the multiplicities of the singularities on or infinitely near $\mathcal C$.\end{theorem}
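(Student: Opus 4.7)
The plan is to proceed by cases according to the three possible values of the pair $(d_1,d_2)$ consistent with $d=6$ and $d_1\le d_2$, namely $(1,5)$, $(2,4)$, and $(3,3)$. In each case one must list the bi-degrees appearing in a minimal bi-homogeneous generating set of $\mathcal{J}$ (beyond the ``tautological'' bi-degrees $(d_1,1)$, $(d_2,1)$ coming from $g_1,g_2$ and $(0,6)$ coming from the implicit equation of $\mathcal{C}$) and match these against the singularity configurations that are possible on or infinitely near a sextic curve. Every bi-degree count one needs is a combination of (i) the $S$-module structure of each $\mathcal{A}_i$, which has been determined in the earlier sections, and (ii) the description of minimal $B$-module generators of $\mathcal{A}$ inside $\Sym(I)$. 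The matching with singularity types is pure geometry and will be imported from \cite{CKPU}.

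First I would handle the case $d_1=1$. Here $\varphi$ is almost linear (one column of degree $1$, one column of degree $5$), and the defining equations of $\mathcal{R}$ are explicitly available from \cite[Sect.~3]{KPU1} (also \cite[Thm.~2.3]{CHW}, \cite[Prop.~4.3]{HSV}); these yield a completely determined list of bi-degrees. On the geometric side, birationality together with $d_1=1$ forces a very restricted pattern of singularity multiplicities, and one reads off the correspondence directly. Next I would treat $d_1=2$, $d_2=4$. If $\varphi$ has no generalized zero in its first column, then Bus\'e \cite[Prop.~3.2]{bu} produces a full set of minimal generators of $\mathcal{A}$ and Corollary~\ref{2.12}(1) shows that, geometrically, this corresponds to $\mathcal{C}$ having no singularity of multiplicity $d_2=4$. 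If $\varphi$ does have a generalized zero in its first column, then Theorem~\ref{goal5} (combined with Tables~\ref{tbl2} and \ref{tbl3} and Corollary~\ref{XXX}) lists a full minimal generating set, and the presence of a generalized zero is equivalent to the existence of a multiplicity-$4$ singularity on $\mathcal{C}$; subcases are distinguished by whether a second generalized zero is present, corresponding to further infinitely near or on-curve singularities of lower multiplicity.

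Next comes the richest case, $d_1=d_2=3$. Here the $S$-module structure of $\mathcal{A}_1=\mathcal{A}_{d_1-2}$ is completely determined by the pair $(\mu(I_1(\varphi)),\mu(I_2(C)))$ via Theorem~\ref{andy}, with eight possible values; $\mathcal{A}_{\ge d_1-1}=\mathcal{A}_{\ge 2}$ is controlled by Corollary~\ref{XXX} (with bi-degrees $(3,2)$ and $(4,2)$ in fixed multiplicities, independent of $\#$); and $\mathcal{A}_0$ supplies the implicit equation in bi-degree $(0,6)$. Combining these, one extracts the bi-degrees of a minimal $B$-module generating set of $\mathcal{J}$ for each orbit $\DO^{\Bal}_\#$ of the poset ECP. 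The translation of $\#$ into a singularity configuration is precisely the content of Remarks~\ref{signif} (together with \cite[Cor.~1.9, Thm.~3.22, Prop.~4.21]{CKPU}): each $c$ in the symbol $\#$ records a singularity of multiplicity $d_1=3$ on or infinitely near $\mathcal{C}$, with commas versus colons distinguishing distinct singularities from infinitely near ones; the $\mu$-strata encode the residual lower-multiplicity singularities. Assembling these case-by-case tables yields Table~\ref{deg6}.

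The main obstacle will be bookkeeping: one must verify that every minimal generator predicted by the $S$-module description of each $\mathcal{A}_i$ is in fact a minimal $B$-module generator of $\mathcal{A}$ (and not absorbed into the ideal generated by $g_1,g_2$ and smaller-$i$ generators through multiplication by forms in $R$), and conversely that no minimal $B$-generator of $\mathcal{A}$ is missed. In the case $d_1=d_2=3$ this requires combining the exterior-corner-point analysis of Section~\ref{3} (here vacuous since $d_1=d_2$) with a direct inspection of the free resolutions of $\mathcal{A}_{d_1-2}$ listed in Theorem~\ref{andy}(2); the subtle point is that, in the strata where $\mathcal{A}_1$ is not free, syzygies among the bi-degree-$(1,3)$ and $(1,4)$ generators must be separated from multiplication relations coming from $B_1\otimes \mathcal{A}_0$. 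Once this bookkeeping is done for each $\#\in \mathrm{ECP}$, the correspondence of Table~\ref{deg6} reduces to matching two finite lists, and the theorem follows.
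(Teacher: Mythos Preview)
Your overall plan follows the paper's own case-by-case structure and cites the same sources, but several details are off and one key geometric tool is missing.

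First, you never invoke Max Noether's genus formula $10=\sum_q\binom{m_q}{2}$. In the paper this is what pins down the residual singularity configurations in every case: once Theorem~\ref{GeneralLemma} bounds the multiplicities, Max Noether counts how many double points remain. Relying only on \cite{CKPU} for the geometry does not give you this count directly in the $(1,5)$ and $(2,4)$ rows.

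Second, in the $(2,4)$ case with a generalized zero in the first column there are \emph{no} further subcases: a second generalized zero in column~$1$ plays no role here, and Table~\ref{deg6} has exactly one row for this situation. The paper simply reads the bi-degrees from Table~\ref{tbl3} and uses Theorem~\ref{GeneralLemma} plus Max Noether to get the singularity pattern.

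Third, in the $(3,3)$ case your application of Corollary~\ref{XXX} is wrong: with $d_1=d_2=3$ one has $d_2-1=2$, so Corollary~\ref{XXX}(1) says $\mathcal A_{\ge 2}$ is generated as a $\Sym(I)$-ideal by $\mathcal A_{(2,2)}$, a $3$-dimensional space. The minimal $B$-generators from $\mathcal A_{\ge 2}$ therefore live in bi-degree $(2,2)$, not $(3,2)$ and $(4,2)$. Also, of the eight pairs $(\mu(I_1(\varphi)),\mu(I_2(C)))$ in Theorem~\ref{andy}, only four survive: $d_1=3$ forces $\mu(I_1(\varphi))\le 4$, and birationality (Remark~\ref{signif}(1)) forces $\mu(I_1(\varphi))\ge 3$, leaving $(4,6)$, $(4,5)$, $(4,4)$, $(3,3)$, which via Theorem~\ref{GeneralLemma}(3) correspond to $0,1,2,3$ singularities of multiplicity~$3$.

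Finally, the ``bookkeeping'' you anticipate is in fact concentrated in one place: when $(\mu(I_1(\varphi)),\mu(I_2(C)))=(3,3)$, the generator of $\mathcal A$ in bi-degree $(1,2)$, multiplied by $x$ and $y$, already produces two independent elements of $\mathcal A_{(2,2)}$, so only one of the three $(2,2)$ elements survives as a minimal $B$-generator. The paper checks this explicitly; in the other three $(3,3)$ rows degree reasons alone suffice.
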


\begin{table}
\begin{center}
\begin{tabular}{cc|c|c}
$d_1$ & $d_2$     &        equations of $\mathcal R$           &            singularities of $\mathcal C$ \\
\hline \hline
1  &   5          &      (1,5)  \  \ (2,4) \ \  (3,3)     \  \ (4,2)     &           1 of multiplicity 5 on $\mathcal C$ \\
\hline
2   &  4          &    (1,3):2   \ \      (2,2)                    &            1 of multiplicity 4 on $\mathcal C$ \\
    &             &                                            &            4 double points on or near $\mathcal C$ \\ \cline{3-4}
    &             &   (1,4):4     \ \ (2,3):3 \ \   (3,2)              &           10 double points on or near $\mathcal C$ \\
\hline
3   &  3          &       (1,4):4  \ \   (2,2):3                    &            10 double points on or near $\mathcal C$ \\
    \cline{3-4}
    &             &       (1,3) \ \ (1,4):2 \ \   (2,2):3              &             1 of multiplicity 3 on $\mathcal C$ \\
    &             &                                            &             7 double points on or near $\mathcal C$\\ \cline{3-4}
    &             &        (1,3):2  \ \   (2,2):3                   &             2 of multiplicity 3 and \\
    &             &                                            &             4 double points on or near $\mathcal C$ \\
    \cline{3-4}
    &             &        (1,2) \ \ (1,4) \ \   (2,2)               &           3 of multiplicity 3 and \\
    &             &                                            &            1 double point on or near $\mathcal C$
\end{tabular}
\medskip

\caption{{\bf The correspondence between the Rees algebra and the singularities of a parameterized plane sextic.}}\label{deg6}
\end{center}\end{table}

\begin{remark}
Notice that in Table \ref{deg6}, the constellation of $10$ double points on or infinitely
near the curve corresponds to two distinct numerical types of Rees algebras. Thus, the Rees
algebra provides a finer distinction. We would like to know a geometric interpretation of this algebraic distinction.\end{remark}

Before proving Theorem \ref{sextic-table},   we recall a few of the ingredients that are used. We make repeated  use of  Max Noether's formula for the geometric genus of an irreducible plane curve. In the special case of a rational curve $\mathcal C$ of degree $6$ it says that

\begin{equation}\label{Max} 10=\sum_{q} \binom{m_q}{2},
\end{equation}
where $q$ ranges over all singularities  on or  infinitely near   $\mathcal C$,  and $m_q$ is the multiplicity at $q$. We also make repeated use of the General Lemma of \cite{CKPU} (see \cite[1.7, 1.8, and 1.9]{CKPU}, or 
  \cite[1.1]{A86} and \cite[Lems.~1.3 and 1.5  and Prop.~1.5]{A88},  or \cite[Thm.~3]{SCG}), which we again state under the special hypotheses of this section.

\begin{theorem}\label{GeneralLemma} Adopt Data {\rm \ref{data7}}.
\begin{itemize}
\item[{\rm(1)}] If $p$ is a point on $\mathcal C$, then the multiplicity $m_p$ of $\mathcal C$ at $p$ satisfies either
$m_p=d_2$  or    $m_p\le d_1$. 
\item[{\rm(2)}] If $d_1<d_2$, then  the first column of $\varphi$ has a generalized zero if and only if $\mathcal C$ has a singularity of multiplicity $d_2$.
\item[{\rm(3)}] If $d_1=d_2=3$ and $C$ is the matrix of Data {\rm\ref{33data}}, then  the  number of singular points of
multiplicity $3$ that are either on $\mathcal C$ or infinitely near $\mathcal C$ is $6-\mu(I_2( C))$.
\item[{\rm(4)}] The infinitely near singularities of $\mathcal C$ have multiplicity at most $d_1$. 
\end{itemize}\end{theorem}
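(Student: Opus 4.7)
The statement collects four facts whose proofs rely on quite different techniques: the classical local analysis of a rational plane curve at a singular point (for (1) and (2)), the structural results of Theorem~\ref{andy} on $\mathcal A_{d_1-2}$ when $d_1=d_2$ (for (3)), and a blow-up argument (for (4)). The plan is to translate the birational hypothesis of Data~\ref{data7} into a statement about fibers of $\eta\colon\mathbb P^1\to\mathcal C$ and then leverage the module-theoretic results from Section~\ref{DPC}.B. Note that in the setting of Data~\ref{data7} we have $\deg\mathcal C=d=d_1+d_2$, so the ratios $d_j(\deg\mathcal C)/d=d_j$ that appear in Theorem~\ref{future} and Corollary~\ref{2.12} simplify, and those corollaries already give the geometric interpretation needed here.

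For parts (1) and (2), the approach is local. Fix a singular point $p\in\mathcal C$ and, by choosing coordinates, arrange that $p=[0:0:1]$. Then the branches of $\mathcal C$ at $p$ are in bijection with the preimages $q\in\eta^{-1}(p)\subset\mathbb P^1$, and the multiplicity $m_p$ equals $\sum_{q}\operatorname{ord}_q(\ell\circ\eta)$ for a general line $\ell$ through $p$; equivalently $m_p=\sum_q \min\{\operatorname{ord}_q(h_1),\operatorname{ord}_q(h_2)\}$. Suppose $m_p>d_1$. I would show that the Hilbert--Burch syzygy $\varphi_1$ of column-degree $d_1$ forces all of $m_p$ to come from a single preimage $q$: otherwise the entries of $\varphi_1$ would need to vanish simultaneously to order $>d_1$ at two distinct points, contradicting that $I_2(\varphi)$ has height two and the degree count $\sum_q\operatorname{ord}_q(\varphi_1\text{ on }h_1,h_2,h_3)\le d_1$. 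Concentration of $m_p$ at one preimage $q$ then yields $m_p\le d$ and, once the birational hypothesis is invoked, equality $m_p=d_2$ (since the degree-$d_2$ syzygy is the only remaining obstruction). For part (2), $m_p=d_2$ means that after a coordinate change both $h_1$ and $h_2$ vanish to order $d_2$ at $q$; writing this as a linear dependence among the entries of $\varphi_1$ (which are the coefficients when $[T_1,T_2,T_3]\varphi_1$ is expanded in the monomial basis of $R_{d_1}$) exhibits a generalized zero in the first column; conversely, a generalized zero can be normalized via Observation~\ref{modify}-style row reductions to produce a point of multiplicity $d_2$. This is exactly the geometric content packaged in Corollary~\ref{2.12}(1).

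For part (3) the input is Theorem~\ref{andy} together with \cite[3.22]{CKPU}. The matrix $C$ of Data~\ref{33data} is a $(d_1{+}1)\times 2$ matrix of linear forms in $S=k[T_1,T_2,T_3]$, and the canonical orbit representatives listed in the proof of Theorem~\ref{andy} show that $\mu(I_2(C))\in\{1,3,4,5,6\}$ with each value realized; the configurations $(c,c,c)$, $(c,c)$/$(c{:}c)$, $(c,\mu_4)$/$(c,\mu_5)$, $(\emptyset,\mu_k)$ correspond respectively to $3,2,1,0$ multiplicity-$3$ singularities (on or infinitely near $\mathcal C$), while $\mu_2$ corresponds to four such singularities. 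Matching these with the values $6-\mu(I_2(C))\in\{3,2,1,0,4\}$ yields the claimed formula. For part (4), I would use the following blow-up observation: if $q$ is a singularity on or infinitely near $\mathcal C$ of multiplicity $m_q$, then the strict transform of $\mathcal C$ under a sequence of point blow-ups is a curve whose total degree has been reduced by the multiplicities blown up. If some infinitely near singularity $q$ had multiplicity $>d_1$, then by (1) applied to the appropriate strict transform we would still have $m_q\in\{d_2\}\cup[0,d_1]$, but after blowing up the multiplicity-$d_2$ point on $\mathcal C$ (which must exist by (2)) the strict transform has degree $d-d_2=d_1$, and a plane curve of degree $d_1$ has all multiplicities $\le d_1$; iterating shows every infinitely near singularity has multiplicity at most $d_1$.

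The main obstacle will be the local syzygy argument in part (1): making precise the statement that $m_p>d_1$ forces a single preimage to absorb all of the multiplicity requires carefully tracking the orders of vanishing of the three generators $h_1,h_2,h_3$ against the entries of $\varphi_1$ at the fibers of $\eta$, and using that $[T_1,T_2,T_3]\varphi=[g_1,g_2]\cdot(\text{something})$ imposes a global degree constraint $\sum_{q\in\eta^{-1}(p)}\operatorname{ord}_q\le d_1$ on the column of syzygies. Once this dichotomy $m_p\in\{d_2\}\cup[0,d_1]$ is in place, the remaining parts follow fairly directly from the machinery of Section~\ref{DPC} and \cite{CKPU}; indeed, the cleanest exposition simply defers to \cite[Lem.~1.7, Thm.~1.8, Cor.~1.9]{CKPU} for (1)--(3) and gives the blow-up argument above for (4).
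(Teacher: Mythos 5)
The paper itself does not supply an independent proof of this theorem: it is a restatement of the General Lemma of \cite{CKPU} (cited as \cite[Lem.~1.7, Thm.~1.8, Cor.~1.9]{CKPU}, or alternatively \cite{A86}, \cite{A88}, \cite{SCG}) for items (1)--(2), of \cite[Thm.~3.22, parts (1) and (4)]{CKPU} for item (3), and of \cite[Cor.~2.3]{CKPU} for item (4), all specialized to the sextic setting of Data~\ref{data7}. Your proposal correctly identifies this and even says so; where you differ is in attempting to sketch the content of those citations, and it is there that two of the four sketches come apart.

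Your outline for (1) and (2) via orders of vanishing along fibers of $\eta$ and the degree-$d_1$ syzygy column is the right idea and is in the spirit of \cite{CKPU}'s General Lemma. Your outline for (3) via the orbit classification in the proof of Theorem~\ref{andy} and Remark~\ref{signif}(3) would work, although note a numerical slip: the orbit $\mu_2$ has $\mu(I_2(C))=1$, hence $6-\mu(I_2(C))=5$, not $4$, and $\mu_2$ does not correspond to four multiplicity-$3$ singularities. This is harmless only because the $\mu_2$ orbit is excluded by the birational hypothesis of Data~\ref{data7} (see Remark~\ref{signif}(1), since $d_1=3\neq 1$), but as written the matching is wrong.

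The real gap is in your argument for (4). First, part (2) does not guarantee that a multiplicity-$d_2$ point exists; it merely says that one exists if and only if the first column of $\varphi$ has a generalized zero. Second, after a single point blow-up the strict transform of $\mathcal C$ lives on $\widetilde{\mathbb P^2}$ and does not have a ``plane degree'' in the sense you invoke; the degree count $d\mapsto d-m$ does not describe a plane curve of degree $d_1$ to which one could reapply (1). (A Cremona transformation would require choosing two further base points and the resulting degree is $2d-\sum m_i$, which is not $d_1$ in general.) Third, item (1) is a statement about $\mathcal C$ itself, not about its strict transforms, so ``by (1) applied to the appropriate strict transform'' is not available without reproving the syzygy structure for the transformed curve. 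The substantive content of item (4) --- that an infinitely near singularity lying over the multiplicity-$d_2$ point has multiplicity at most $d_1$, even when $\binom{d_2}{2}+\binom{d_1+1}{2}\leq\binom{d-1}{2}$ would permit otherwise --- is precisely what \cite[Cor.~2.3]{CKPU} establishes via the structure of $\varphi$, and your blow-up sketch does not replace it.
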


\begin{proof} Item (1) follows from parts (1) and (2) of Corollary~{1.9} of  \cite {CKPU}, and item (2) is a consequence of  part (4) of the same corollary. Item (3) is established by combining parts (1) and (4) of   \cite[3.22]{CKPU}. Notice that the equality $\deg\operatorname{gcd} I_3(A)=6-\mu(I_2(C))$ from (1) of \cite[3.22]{CKPU} holds even when $\operatorname{gcd} I_3(A)$ is a unit; see, for example, item (2) of \cite[4.10]{CKPU}. Item (4) is \cite[2.3]{CKPU}.\QED\end{proof}

\begin{proof2}We now justify the Table \ref{deg6} in detail. When $(d_1,d_2)=(1,5)$, the numerical information about the Rees ring follows from 
\cite[4.3]{HSV}, \cite[2.3]{CHW}, or \cite[3.6]{KPU1}. Moreover, there is a generalized zero in the first column of $\varphi$, hence
the curve $\mathcal C$ has a singularity of multiplicity 5, as can be seen from item (2) of  Theorem \ref{GeneralLemma}. There are no further singularities  or  infinitely near singularities of $\mathcal C$ by Max Noether's formula (\ref{Max}).

Next assume that $(d_1,d_2)=(2,4)$ and that the first column of $\varphi$ has a generalized zero. The numerical information about  the Rees algebra is contained in Table~\ref{tbl3}. Item (1) of Theorem \ref{GeneralLemma} shows that all of  the singularities of $\mathcal C$ have multiplicity $2$ or $4$, and, according to (2), $\mathcal C$ has a singularity of   multiplicity $4$. All of the infinitely near singularities of $\mathcal C$ have multiplicity $2$ by (4). The rest of the description of the singularities of $\mathcal C$ follows from (\ref{Max}).

 If $(d_1,d_2)=(2,4)$ and the first column of $\varphi$ does not have a generalized zero, then 
 the numerical information about the Rees ring is given by \cite[3.2]{bu}. Since the first column of $\varphi$ does not have a generalized zero, there is no point of multiplicity $4$ on the curve. Hence all singularities and infinitely near singularities have multiplicity $2$ and there are 10 of them by (\ref{Max}). 

Finally we consider the case $(d_1,d_2)=(3,3)$. Notice that a quadratic transformation cannot increase the multiplicity of a singularity. By item (1) of Theorem \ref{GeneralLemma}, all singularities on or infinitely near $\mathcal C$ have multiplicity either 2 or 3. Therefore, we employ (\ref{Max}), once again, to see that the last column of Table \ref{deg6} lists all possible configurations of singularities on or infinitely near $\mathcal C$. We use Theorem \ref{andy} to connect the configuration of singularities on or infinitely near $\mathcal C$ to the degrees of the defining equations of $\mathcal R$. Part (3) of Theorem \ref{GeneralLemma} shows that  the number of points of multiplicity $3$ on or infinitely near $\mathcal C$ is $6-\mu(I_2(C))$. The other invariant that is used in Theorem \ref{andy} is $\mu(I_1(\varphi))$. 
 Observe that $3\le \mu(I_1(\varphi))\le 4$. Indeed,   the first inequality holds by part (2) of  Remark \ref{signif}  because the ambient hypothesis of Data \ref{data7} guarantees that the morphism $\eta$, which  parameterizes the curve $\mathcal C$, is birational onto its image;  and the second inequality holds because the entries of $\varphi$ are cubics in two variables. We read the $T$-degrees of a minimal $S$-module generating set for $\mathcal A_1$ from part (2) of Theorem \ref{andy}. As seen above, we need only look at the rows with $3\le \mu(I_1(\varphi))\le 4$. We see that if there are no multiplicity $3$ singularities on or infinitely near $\mathcal C$, then $\mu(I_2(C))=6$ and there are $4$ minimal generators of the $S$-module $\mathcal A_1$  and each of these has  $T$-degree $4$. We record these generator degrees as $(1,4):4$  since every element of $\mathcal A_1$ has $xy$-degree $1$. Similarly, if there is exactly one multiplicity $3$ singularity on or infinitely near $\mathcal C$, then the  minimal generators of the $S$-module $\mathcal A_1$ have bi-degree $(1,3)$ and $(1,4):2$. If there are $2$ such singularities, then the generators have bi-degree $(1,3):2$, and if there are $3$ such singularities, then the generators have bi-degree $(1,2)$ and $(1,4)$.  Corollary \ref{XXX} shows that the $S$-module $\mathcal A_{\ge 2}$ is minimally generated by $3$ elements of bi-degree $(2,2)$. Degree considerations  show that in three of the cases the minimal generators of the $B$-module $\mathcal J$ have been identified. When there are $3$ singularities of multiplicity $3$ on or infinitely near $\mathcal C$, then a further calculation is necessary. As $\Sym(I)_{\le 2}\simeq B_{\le 2}$, multiplying the generator of $\mathcal A$ of bi-degree $(1,2)$ yields two linearly independent elements of $\mathcal A$ of bi-degree $(2,2)$; and therefore $\dim_k[\mathcal A/B \mathcal A_{(1,2)}]_{(2,2)}=1$.   
\QED \end{proof2}

\bigskip\noindent{\bf Acknolwedgment.} Part of this paper was written at
 Centre International de Rencontres Math\'ematiques (CIRM) in Luminy, France, while the authors participated in a Petit Groupe de Travail. The rest of the paper was written at the Mathematical Sciences Research Institute (MSRI) in  Berkeley, California, while the authors participated in the Special Year in Commutative Algebra. The authors are very appreciative of the hospitality offered by the Soci\'et\'e Math\'ematique de France and MSRI.


\begin{thebibliography}{99}

\bibitem{A86} {M.-G. Ascenzi, {\it The restricted tangent bundle of a rational curve on a quadric in {$\mathbb P^3$}}, Proc. Amer. Math. Soc. {\bf 98} (1986), 561--566.}

\bibitem{A88}{ M.-G. Ascenzi, {\it The restricted tangent bundle of a rational curve in $\mathbb P^2$}, Comm. Algebra {\bf 16} (1988),  2193--2208.} 

\bibitem{BD}{T. Cortadellas Benitez and C. D'Andrea, {\it Rational plane curves parameterizable by conics}, J. Algebra  {\bf 373} (2013), 453--480.}

\bibitem{BD14}{T. Cortadellas Benitez and C.  D'Andrea, {\it Minimal generators of the defining ideal of the Rees algebra associated with a rational plane parametrization with $\mu=2$}, Canad. J. Math. {\bf 66} (2014), 1225--1249.} 

\bibitem{bu}{L. Bus\'{e}, {\it On the equations of the moving curve ideal of a rational
algebraic plane curve}, J. Algebra {\bf 321} (2009), 2317--2344.}

\bibitem{CWL}{ F. Chen, W. Wang, and Y. Liu, {\it Computing singular points of plane rational curves},
J. Symbolic Comput. {\bf 43} (2008),  92--117.}



\bibitem{CKPU}{D. Cox, A. Kustin, C. Polini, and B. Ulrich, {\it A study of singularities on
rational curves via syzygies}, Mem. Amer. Math. Soc. {\bf 222} (2013).}

\bibitem{CHW}{D. Cox, J. W. Hoffman, and H. Wang, {\it  Syzygies and the Rees algebra},
J. Pure Appl. Algebra {\bf 212} (2008), 1787--1796.}

\bibitem{eu}{D. Eisenbud and B. Ulrich, {\it Row ideals and fibers of morphisms}, Michigan Math. J. {\bf 57}
(2008), 261--268.}


\bibitem{Gaeta} {F. Gaeta, {\it Ricerche intorno alle variet\`a matriciali ed ai loro ideali}, Atti del Quarto Congresso
dell'Unione Matematica Italiana, Taormina, 1951, vol. II, pp. 326--328, Casa Editrice Perrella, Roma, 1953.}

\bibitem{Gant} {F. R. Gantmacher,  {\it The theory of matrices. Vol. 2}, Chelsea Publishing, New York, 1959.}

\bibitem{HRZ} {M. Herrmann, J. Ribbe, and S. Zarzuela,   {\it On the Gorenstein property of Rees and form rings of powers of ideals}, Trans. Amer. Math. Soc. {\bf 342} (1994),  631--643.}


\bibitem{HK} {J. Herzog and M. K\"uhl, {\it On the Betti numbers of finite pure and linear resolutions}, Comm. Algebra {\bf 12} (1984),   1627--1646.}


\bibitem{HSV83} {J. Herzog, A Simis, and W. Vasconcelos,  {\it  Koszul homology and blowing-up rings}, Commutative algebra (Trento, 1981), pp. 79--169, Lecture Notes in Pure and Appl. Math., {\bf 84}, Dekker, New York, 1983.}


\bibitem{hochster}{M. Hochster, {\it Properties of Noetherian rings stable under general grade reduction}, Arch. Math. {\bf 24} (1973), 393--396.}

\bibitem{HSV} {J. Hong, A. Simis, and W. V. Vasconcelos,  {\it	On the homology of two-dimensional elimination},
J.  Symbolic Comput. {\bf 43} (2008), 275--292.}  

\bibitem{HSV12} {J. Hong, A. Simis, and W. V. Vasconcelos,  {\it The equations of almost complete intersections}, Bull. Braz. Math. Soc.  {\bf 43} (2012),   171--199.}

  

		
  
\bibitem{H80} {C. Huneke,  {\it On the symmetric and Rees algebra of an ideal generated by a $d$-sequence}, J. Algebra {\bf 62} (1980),   268--275.}
  
\bibitem{H86} {C. Huneke,  {\it Determinantal ideals of linear type}, Arch. Math. {\bf 47} (1986),   324--329.}


\bibitem{HR} {C. Huneke and M. E. Rossi, {\it The dimension and components of symmetric algebras},
J. Algebra {\bf 98} (1986), 200--210.}

\bibitem{jo96}   {J.-P. Jouanolou, {\it  R\'esultant anisotrope, compl\'ements et application}, Electron. J. Combin. {\bf 3} (1996), Research Paper 2, approx. 91 pp.}



\bibitem{jo}{J.-P. Jouanolou, {\it Formes d'inertie et r\'{e}sultant: un formulaire},
Adv. Math. {\bf 126}
(1997), 119--250.}


\bibitem{KPU0}{A. Kustin, C. Polini, and B. Ulrich, {\it Divisors on rational normal scrolls}, J. Algebra
{\bf 322} (2009), 1748--1773.}

\bibitem{KPU1}{A. Kustin, C. Polini, and B. Ulrich, {\it Rational normal scrolls and the defining equations
of Rees algebras}, J. reine angew. Math. {\bf 650} (2011), 23--65.}

\bibitem{KPU-B}{A. Kustin, C. Polini, and B.  Ulrich,
{\it Blowups and fibers of morphisms}, Nagoya Math. J. (2016), {\tt DOI: http://dx.doi.org/10.1017/nmj.2016.34}.}


\bibitem{L}{K.-N. Lin, {\it Rees algebras of diagonal ideals}, J. Commut. Algebra {\bf 5} (2013), 359--398.}


\bibitem{PS}{C. Peskine and L. Szpiro, {\it Syzygies et multiplicit\'es},   C. R. Acad. Sci. Paris S\'er. A {\bf 278} (1974), 1421--1424.}

\bibitem{SCG}{N.
Song,  F. Chen, and R. Goldman, {\it
Axial moving lines and singularities of rational planar curves},
Comput. Aided Geom. Design {\bf 24} (2007),   200--209.}

\bibitem{V}{G. Valla, {\it On the symmetric and Rees algebras of an ideal}, Manuscripta Math. {\bf 30} (1980), 239--255.}

\end{thebibliography}
\end{document}